\newcommand{\rt}{\rightarrow}
\newcommand{\lrt}{\longrightarrow}
\newcommand{\st}{\stackrel}
\newcommand{\la}{\lambda}
\newcommand{\La}{\Lambda}
\newcommand{\SA}{\mathscr{A}}
\newcommand{\SB}{\mathscr{B}}
\newcommand{\SC}{\mathscr{C}}
\newcommand{\SD}{\mathscr{D}}
\newcommand{\SF}{\mathscr{F}}
\newcommand{\SR}{\mathscr{R}}
\newcommand{\ST}{\mathscr{T}}
\newcommand{\SU}{\mathscr{U}}
\newcommand{\SX}{\mathscr{X}}
\newcommand{\SY}{\mathscr{Y}}
\newcommand{\CC}{\mathcal{C} }
\newcommand{\CX}{\mathcal{X} }
\newcommand{\Mod}{{\rm{Mod\mbox{-}}}}
\newcommand{\mmod}{{\rm{{mod\mbox{-}}}}}
\newcommand{\Inj}{{\rm{Inj}}}
\newcommand{\Prj}{{\rm{Prj}}}
\newcommand{\Prod}{{\rm{Prod}}}
\newcommand{\im}{{\rm{Im}}}
\newcommand{\add}{{\rm{add}}}
\newcommand{\Add}{{\rm{Add}}}
\newcommand{\Fac}{{\rm{Fac}}}
\newcommand{\Sub}{{\rm{Sub}}}
\newcommand{\ann}{{\rm{ann}}}
\newcommand{\Pres}{{\rm Pres}}
\newcommand{\Copres}{{\rm Copres}}
\newcommand{\Coker}{{\rm{Coker}}}
\newcommand{\Ker}{{\rm{Ker}}}
\newcommand{\Rep}{{\rm Rep}}
\newcommand{\RepR}{{\rm Rep}^{\rm{fp}}_k \mathbb{R}_{\geq 0}}
\newcommand{\RepRO}{{\rm Rep}^{\rm{fp}}_k\mathbb{R}_{\geq 1}}
\newcommand{\Hom}{{\rm{Hom}}}
\newcommand{\Ext}{{\rm{Ext}}}
\newcommand{\Gen}{{\rm{Gen}}}
\newcommand{\Cogen}{{\rm{Cogen}}}
\theoremstyle{plain}
\newtheorem{theorem}{Theorem}[section]
\newtheorem{corollary}[theorem]{Corollary}
\newtheorem{lemma}[theorem]{Lemma}
\newtheorem{proposition}[theorem]{Proposition}
\theoremstyle{definition}
\newtheorem{definition}[theorem]{Definition}
\newtheorem{example}[theorem]{Example}
\newtheorem{remark}[theorem]{Remark}
\theoremstyle{plain}
\newtheorem{stheorem}{Theorem}[subsection]
\newtheorem{scorollary}[stheorem]{Corollary}
\newtheorem{slemma}[stheorem]{Lemma}
\newtheorem{sproposition}[stheorem]{Proposition}
\theoremstyle{definition}
\newtheorem{sdefinition}[stheorem]{Definition}
\newtheorem{sexample}[stheorem]{Example}
\newtheorem{sremark}[stheorem]{Remark}
\numberwithin{equation}{section}
\newcommand{\Dim}[1]{%
  {%
    \tiny%
    \begin{matrix}%
      #1%
    \end{matrix}%
  }%
}
\begin{document}

\title[($\tau$-)tilting subcategories]{On $\tau$-tilting subcategories}

\author[J. Asadollahi, S. Sadeghi and H. Treffinger]{Javad Asadollahi, Somayeh Sadeghi and Hipolito Treffinger}

\address{Department of Pure Mathematics, Faculty of Mathematics and Statistics, University of Isfahan, P.O.Box: 81746-73441, Isfahan, Iran}
\email{asadollahi@sci.ui.ac.ir, asadollahi@ipm.ir }
\email{so.sadeghi@sci.ui.ac.ir }

\address{Universit\'{e} de Paris, B\^{a}timent Sophie Germain 5, rue Thomas Mann 75205, Paris Cedex 13, FRANCE}
\email{treffinger@imj-prg.fr}

\makeatletter \@namedef{subjclassname@2020}{\textup{2020} Mathematics Subject Classification} \makeatother

\subjclass[2020]{18E10, 18E40, 16S90, 16E30, 18G15}

\keywords{Abelian category, ($\tau$-)tilting subcategory, torsion theory, silting module, quiver representation}

\begin{abstract}
The main theme of this paper is to study $\tau$-tilting subcategories in an abelian category $\SA$ with enough projective objects.
We introduce the notion of $\tau$-cotorsion torsion triples and show a bijection between the collection of $\tau$-cotorsion torsion triples in $\SA$ and the collection of $\tau$-tilting subcategories of $\SA$, generalizing the bijection by Bauer, Botnan, Oppermann and Steen between the collection of cotorsion torsion triples and the collection of tilting subcategories of $\SA$. General definitions and results are exemplified using persistent modules.
If $\SA=\Mod R$, where $R$ is an unitary associative ring, we characterize all support $\tau$-tilting, resp. all support $\tau^-$-tilting, subcategories of $\Mod R$ in term of finendo quasitilting, resp. quasicotilting, modules. 
As a result, it will be shown that every silting module, respectively every cosilting module, induces a support $\tau$-tilting, respectively support $\tau^{-}$-tilting, subcategory of $\Mod R$. 
We also study the theory in $\Rep(Q, \SA)$, where $Q$ is a finite and acyclic quiver. 
In particular, we give an algorithm to construct support $\tau$-tilting subcategories in $\Rep(Q, \SA)$ from certain support $\tau$-tilting subcategories of $\SA$ and present a systematic way to construct $(n+1)$-tilting subcategories in $\Rep(Q, \SA)$ from $n$-tilting subcategories in $\SA$.
\end{abstract}

\maketitle

\tableofcontents

\section{Introduction}
Tilting theory is one of the most prominent tools in representation theory of artin algebras. The classical tilting modules were introduced by Brenner and Butler \cite{BB}, and Happel and Ringel \cite{HR} as an axiomatisation of the reflection functors of Bernstein, Gelfand and Ponomarev \cite{BGP} and Auslander, Platzek and Reiten \cite{APR}.
It has been shown by Bongartz \cite{Bon} that every partial tilting module can be completed to a tilting module and by Happel and Unger \cite{HappelUnger} that every almost complete tilting module can be completed in at most two ways.
However, there are examples of almost complete tilting modules that have exactly one complement.

Several years later, and inspired by the cluster algebras defined by Fomin and Zelevnisky in \cite{FZ1}, Adachi, Iyama and Reiten introduced $\tau$-tilting theory \cite{AIR}.
This is a generalization of classical tilting theory in which every almost complete support $\tau$-tilting module has exactly two complements, allowing to introduce a notion of mutation among these objects.
The success of $\tau$-tilting theory was immediate, offering an explanation for several phenomena in the module category of artin algebras and offering new connections between representation theory and other areas of mathematics (see \cite{TreffingerSurvey}).

Due to the effectiveness of $\tau$-tilting theory for the study of the categories of finitely presented modules, many mathematicians have introduced theories generalizing $\tau$-tilting theory, and its dual, to other contexts.
For instance, there are the works of Angeleri-H\"ugel, Marks and Vitoria \cite{AMV} and Breaz and Pop \cite{BP} for the module category of rings; Iyama, J\o rgensen and Yang \cite{IJY} for functor categories; or Liu and Zhou for Hom-finite abelian categories with enough projective objects \cite{LZh}.

In this paper we are interested in studying $\tau$-tilting theory in arbitrary abelian categories with enough projective objects.
Since in general there is no notion of Auslander-Reiten translation $\tau$ in such general categories, the definition of support $\tau$-tilting subcategories needs to be made with no mention to it. We follow \cite{IJY, LZh} and define support $\tau$-tilting subcategories as follows.

\begin{definition}
Let $\SA$ be an abelian category with enough projective objects.
Let $\ST$ be an additive  contravariantly finite full subcategory of $\SA$.
Then $\ST$ is called a support $\tau$-tilting subcategory if
\begin{itemize}
  \item[$1.$] $\Ext^1_{\SA}(T_1,\Fac(T_2))=0,$ for all $T_1, T_2 \in \ST$.
  \item[$2.$] For any projective $P$ in $\SA$, there exists an exact sequence
  \[P \st{f}{\lrt} T^0 \lrt T^1 \lrt 0\]
  such that $T^0$ and $T^1$ are in $\ST$ and $f$ is a left $\ST$-approximation of $P$.
\end{itemize}
\end{definition}

The idea of generalizing a well-behaved theory from the category of finitely presented modules of an artin algebras to more general abelian categories is not exclusive to $\tau$-tilting theory.
Indeed, Beligiannis introduced tilting theory for arbitrary abelian categories in \cite{B} at the beginning of the millenium, see also \cite{BR}.
In recent years, the work of Bauer, Botnan, Oppermann and Steen \cite{BBOS} has found a striking application of this theory in persistence theory and topological data analysis (TDA).
One of the main results in \cite{BBOS} is a bijection between tilting subcategories of an abelian category with enough projective objects and the collection of cotorsion torsion triples in the same category.
The motivation that started this collaboration is to find a generalization of \cite[Theorem~2.29]{BBOS} to support $\tau$-tilting subcategories, but for that we first need an adequate notion of triple.
Inspired by \cite[Lemma~V.3.3]{BR} we give the definition of $\tau$-cotorsion torsion triple as follows.

\begin{definition}
Let $\SA$ be an abelian category with enough projective objects.
A triple of full subcategories $(\SC, \SD, \SF)$ of $\SA$ is called a $\tau$-cotorsion torsion triple if
\begin{itemize}
\item[$1.$] $\SC = {}^{\perp_1}\SD$.
\item[$2.$] For every projective object $P \in \SA$, there exists an exact sequence
\[  P \st{f}\lrt {D}\lrt C \lrt 0,\]
 where $D \in \SC\cap\SD$, $C \in \SC$ and $f$ is a left $\SD$-approximation.
\item[$3.$] $\SC\cap\SD$ is a contravariantly finite subcategory of $\SA$.
\item[$4.$] $(\SD, \SF)$ is a torsion pair in $\SA$.
\end{itemize}
\end{definition}

Having the notion of $\tau$-cotorsion torsion triple we are able to show the desired bijection in an abelian category with enough projective objects.

\begin{theorem}[Theorem~\ref{Bijection}]
Let $\SA$ be an abelian category with enough projective  objects.
Then there are bijections
\begin{align*}
\Phi:  \{\mbox{support $\tau$-tilting subcategories}\}& \longrightarrow \{ \mbox{$\tau$-cotorsion torsion triples}\}\\
\ST& \longmapsto ({}^{\perp_1}{(\rm{Fac}(\ST))}, \rm{Fac}(\ST), \ST^{\perp_0})\\
\\
\Psi:  \{ \mbox{$\tau$-cotorsion torsion triples}\} & \longrightarrow \{\mbox{support $\tau$-tilting subcategories}\}\\
(\SC, \SD, \SF) & \longmapsto \SC\cap\SD
\end{align*}
which are mutually inverse. Moreover these bijections restrict to bijections between the collection of tilting subcategories of $\SA$ and the collection of cotorsion torsion triples in $\SA$.
\end{theorem}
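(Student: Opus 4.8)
The plan is to show that $\Phi$ and $\Psi$ are well defined, that they are mutually inverse, and finally to identify the extra hypotheses that cut out the tilting case. For a support $\tau$-tilting subcategory $\ST$ I write $\SC={}^{\perp_1}(\Fac(\ST))$, $\SD=\Fac(\ST)$, $\SF=\ST^{\perp_0}$, so that $\Phi(\ST)=(\SC,\SD,\SF)$. I would begin with the easier direction, that $\Psi$ lands among support $\tau$-tilting subcategories. Given a $\tau$-cotorsion torsion triple $(\SC,\SD,\SF)$, set $\ST=\SC\cap\SD$; contravariant finiteness is its third axiom. For the Ext-vanishing, if $T_1,T_2\in\SC\cap\SD$ then $\Fac(T_2)\subseteq\SD$ since $(\SD,\SF)$ is a torsion pair and $\SD$ is closed under quotients, whence $T_1\in{}^{\perp_1}\SD$ forces $\Ext^1_\SA(T_1,\Fac(T_2))=0$; and the approximation sequence comes from the second axiom $P\st{f}\lrt D\lrt C\lrt 0$, because $C$ is a quotient of $D\in\SD$ and so lies in $\SC\cap\SD$, while a left $\SD$-approximation is a fortiori a left $(\SC\cap\SD)$-approximation.

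Next I would verify that $\Phi$ is well defined. The first axiom of the triple holds by construction. For the second, the defining sequence $P\st{f}\lrt T^0\lrt T^1\lrt 0$ of $\ST$ does the job: the Ext-vanishing of $\ST$ gives $\ST\subseteq{}^{\perp_1}\Fac(\ST)=\SC$, while $\ST\subseteq\Fac(\ST)=\SD$ is clear, so $T^0\in\SC\cap\SD$ and $T^1\in\SC$; moreover a left $\ST$-approximation out of the projective $P$ is automatically a left $\Fac(\ST)$-approximation, since any map $P\lrt D$ with $D\in\Fac(\ST)$ lifts along an epimorphism from an object of $\ST$ (using projectivity of $P$) and then factors through $f$. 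The fourth axiom, that $(\Fac(\ST),\ST^{\perp_0})$ is a torsion pair, is the first substantial point: one shows $\Fac(\ST)$ is closed under extensions by the standard lifting argument fuelled by $\Ext^1_\SA(\ST,\Fac(\ST))=0$, and identifies its torsion-free class as $\Fac(\ST)^{\perp_0}=\ST^{\perp_0}$, the canonical torsion sequences being produced as $0\lrt \im(T_X\lrt X)\lrt X\lrt X/\im(T_X\lrt X)\lrt 0$ from a right $\ST$-approximation $T_X\lrt X$ (trace construction). The third axiom then follows from the identity $\SC\cap\SD=\ST$, treated below.

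The heart of the matter, and the step I expect to be the main obstacle, is the identity $\SC\cap\SD=\ST$ for support $\tau$-tilting $\ST$ (it yields both the third axiom above and $\Psi\circ\Phi=\mathrm{id}$), together with the companion identities $\Fac(\SC\cap\SD)=\SD$ and $(\SC\cap\SD)^{\perp_0}=\SF$ needed for $\Phi\circ\Psi=\mathrm{id}$. The inclusion $\ST\subseteq\SC\cap\SD$ is immediate. The reverse inclusion asserts that every object of $\Fac(\ST)$ that is $\Ext^1$-orthogonal to $\Fac(\ST)$ — i.e.\ every Ext-projective object of the torsion class $\SD$ — already lies in $\ST$, the analogue of the classical fact that the Ext-projectives of $\Gen(M)$ coincide with $\add M$. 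For $X\in\SC\cap\SD$ I would take a right $\ST$-approximation $T_X\lrt X$, which is an epimorphism because $X\in\Fac(\ST)$, with kernel $K$, and aim to split $0\lrt K\lrt T_X\lrt X\lrt 0$; the difficulty is precisely that $K$ need not lie in $\Fac(\ST)$, so $\Ext^1_\SA(X,K)=0$ cannot be read off directly and must instead be extracted by iterating the approximation sequences of the second condition and running a Wakamatsu-type argument inside the torsion class $\SD$. For $\Phi\circ\Psi=\mathrm{id}$ I would use the second axiom of the triple to show that every $D\in\SD$ is a quotient of an object of $\SC\cap\SD$ (take an epimorphism from a projective and factor it through the left $\SD$-approximation), giving $\Fac(\SC\cap\SD)=\SD$ and hence ${}^{\perp_1}\Fac(\SC\cap\SD)={}^{\perp_1}\SD=\SC$ by the first axiom, while $(\SC\cap\SD)^{\perp_0}=\SD^{\perp_0}=\SF$ records the equality of torsion-free classes.

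Finally, for the \emph{moreover} clause I would restrict to the case in which the approximation sequences are left exact, i.e.\ $P\st{f}\lrt T^0$ is a monomorphism; this is exactly the condition promoting $\ST$ from support $\tau$-tilting to tilting, and on the other side it upgrades the data $(\SC,\SD)$ from the weak, one-sided information of the first three axioms to a genuine (complete) cotorsion pair, that is, to a cotorsion torsion triple. Matching these supplementary conditions, the bijection already constructed restricts to the desired correspondence between tilting subcategories of $\SA$ and cotorsion torsion triples in $\SA$.
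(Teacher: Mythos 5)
Your overall architecture coincides with the paper's: well-definedness of $\Psi$ is their Lemma~\ref{WeakCotorsionImpliesTauTilting}, your upgrade of the left $\ST$-approximation to a left $\Fac(\ST)$-approximation and your trace-construction proof that $(\Fac(\ST),\ST^{\perp_0})$ is a torsion pair are exactly Propositions~\ref{TauTiltingImpliesWeakCotorsion} and \ref{tau-torsion pair}, and your argument that every $D\in\SD$ is a quotient of $\SC\cap\SD$ (factor a projective cover of $D$ through the left approximation) is precisely their Lemma~\ref{FacIntersection}, yielding $\Phi\Psi=\mathrm{id}$ the same way. The genuine gap is exactly where you predicted it: the identity ${}^{\perp_1}(\Fac(\ST))\cap\Fac(\ST)=\ST$, which carries both the third axiom of the $\tau$-cotorsion pair and $\Psi\Phi=\mathrm{id}$. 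You reduce it to splitting $0\lrt K\lrt T_X\lrt X\lrt 0$ for a right $\ST$-approximation of an Ext-projective $X$ of $\Fac(\ST)$, correctly observe that $K\in\Fac(\ST)$ is unavailable, and then defer to an unexecuted ``Wakamatsu-type argument inside $\SD$.'' That deferral is the missing proof: no such argument is carried out, and it is not routine --- even in $\mmod\La$ the analogous statement (Ext-projectives of $\Fac(T)$ equal $\add(T)$) requires a nontrivial lemma producing, for $X\in\Fac(T)$, an epimorphism from $\add(T)$ whose kernel stays in $\Fac(T)$. The paper does not prove this internally either; instead it invokes the equivalence $\SA\simeq\mmod\Prj(\SA)$ (finitely presented functors on projectives, after Beligiannis \cite{Be1}) and cites part $(ii)$ of the proof of \cite[Proposition~5.3]{IJY} (their Lemma~\ref{IJY}). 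Without that reduction, or a worked-out replacement for it, your proof of the bijection is incomplete at its central point.

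The ``moreover'' clause is also thinner than it needs to be. It is true that tilting forces the approximations $P\st{f}\lrt T^0$ to be monomorphisms, but passing from ``mono approximations at projectives'' to a genuine cotorsion torsion triple requires constructing, for an \emph{arbitrary} object $A$, both short exact sequences of a complete cotorsion pair: the paper does this by taking $0\lrt K\lrt P\lrt A\lrt 0$, forming the pushout of the tilting coresolution of $K$ along $K\lrt P$, and using closure of ${}^{\perp_1}(\Fac(\ST))$ under extensions to place the middle term correctly; the converse direction (a cotorsion torsion triple yields a tilting subcategory, including the projective-dimension bound via \cite[Lemma~2.12]{BBOS}) rests on Theorem~\ref{2.29}. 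Your single sentence ``matching these supplementary conditions'' elides both of these verifications, so this part too should be counted as sketched rather than proved.
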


We note that Buan and Zhou \cite{BZ} introduced the notion of left weak cotorsion torsion triple in order to provide a version of  \cite[Theorem~2.29]{BBOS} for support $\tau$-tilting modules in $\mmod\La$, where $\La$ is an artin algebra. In Theorem~\ref{lwcotorsion} we show that a triple $(\SC, \SD, \SF)$ of subcategories of $\mmod\La$ is a left weak cotorsion torsion triple if and only if it is a $\tau$-cotorsion torsion triple.

One of the main properties of support $\tau$-tilting modules is that they generate functorially finite torsion classes in module categories.
As a consequence of our results, we obtain the following corollary, see Theorem~\ref{Bijection} and Corollary~\ref{Cor-ff}.

\begin{corollary}
Let $\SA$ be an abelian category with enough projective objects and let $\ST$ be a support $\tau$-tilting subcategory of $\SA$.
Then $\Fac(\ST)$ is a functorially finite torsion class in $\SA$.
\end{corollary}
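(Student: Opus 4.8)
Here is how I would prove that $\Fac(\ST)$ is a functorially finite torsion class. The plan is to read off the torsion-class structure directly from the bijection of Theorem~\ref{Bijection}, and then to establish the two finiteness conditions separately, the covariant one being the substantial part.

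First I would apply the map $\Phi$ of Theorem~\ref{Bijection} to the support $\tau$-tilting subcategory $\ST$, producing the $\tau$-cotorsion torsion triple
\[(\SC,\SD,\SF)=\bigl({}^{\perp_1}(\Fac(\ST)),\ \Fac(\ST),\ \ST^{\perp_0}\bigr).\]
By condition $4$ in the definition of a $\tau$-cotorsion torsion triple, the pair $(\SD,\SF)=(\Fac(\ST),\ST^{\perp_0})$ is a torsion pair in $\SA$; in particular $\Fac(\ST)$ is a torsion class, hence closed under quotients and extensions and equipped with the torsion radical $A\mapsto tA$, fitting into $0\to tA\to A\to A/tA\to 0$ with $tA\in\Fac(\ST)$ and $A/tA\in\ST^{\perp_0}$. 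It then remains to prove that $\Fac(\ST)$ is functorially finite, i.e.\ both contravariantly and covariantly finite.

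Contravariant finiteness I expect to be essentially automatic for a torsion class: for an object $A$ the inclusion $\iota\colon tA\hookrightarrow A$ should be a right $\Fac(\ST)$-approximation. Indeed, given $D\in\Fac(\ST)$ and $\va\colon D\to A$, the image $\im\va$ is a quotient of $D$ and so lies in $\Fac(\ST)$, while the composite $\im\va\hookrightarrow A\twoheadrightarrow A/tA$ is a morphism from $\Fac(\ST)$ to $\ST^{\perp_0}$ and therefore vanishes; thus $\im\va\subseteq tA$ and $\va$ factors through $\iota$.

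The main work, and the only point I expect to present any real obstacle, is covariant finiteness; the key idea is to transport the left approximations that exist for projectives to arbitrary objects by means of a pushout. Since $\SA$ has enough projectives, for an arbitrary object $A$ I would choose an epimorphism $p\colon P\twoheadrightarrow A$ with $P$ projective. Condition $2$ of the triple then supplies a left $\SD$-approximation $f\colon P\to D$ with $D\in\SC\cap\SD\subseteq\Fac(\ST)$. Forming the pushout $Q$ of $p$ and $f$, with structure maps $u\colon A\to Q$ and $v\colon D\to Q$ satisfying $u\circ p=v\circ f$, I note that $v$ is an epimorphism because $p$ is, so $Q$ is a quotient of $D\in\Fac(\ST)$ and hence $Q\in\Fac(\ST)$ by closure under quotients. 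Finally I would verify that $u$ is a left $\Fac(\ST)$-approximation: for $X\in\Fac(\ST)$ and $h\colon A\to X$, the composite $h\circ p\colon P\to X$ factors as $k\circ f$ for some $k\colon D\to X$ since $f$ is a left $\SD$-approximation, and then the equality $h\circ p=k\circ f$ lets the universal property of the pushout produce a unique $\ell\colon Q\to X$ with $\ell\circ u=h$, which is the desired factorization. Putting the three points together yields that $\Fac(\ST)$ is a functorially finite torsion class; the pushout argument is what makes covariant finiteness go through, and it crucially combines the existence of left $\SD$-approximations of projectives (condition $2$) with the closure of $\Fac(\ST)$ under quotients (condition $4$).
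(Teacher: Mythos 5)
Your proposal is correct and takes essentially the same route as the paper: the paper also passes to the $\tau$-cotorsion torsion triple $({}^{\perp_1}(\Fac(\ST)), \Fac(\ST), \ST^{\perp_0})$ (Propositions~\ref{TauTiltingImpliesWeakCotorsion} and~\ref{tau-torsion pair}) and then invokes Corollary~\ref{Cor-ff}, whose covariant-finiteness ingredient, Proposition~\ref{TauTriple}, is precisely your pushout argument transporting the left approximation $f\colon P \lrt D$ of a projective presentation along the epimorphism $P \lrt A$. Your explicit verification of contravariant finiteness via the torsion subobject $tA$ is the standard fact the paper leaves implicit in Corollary~\ref{Cor-ff}, so the two proofs coincide in all essentials.
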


For $\mmod\La$, using results of \cite{AIR}, it is shown that a module $T$ in $\mmod\La$ is a support $\tau$-tilting module if and only if $\add(T)$ is a support $\tau$-tilting subcategory of $\mmod \La$. On the other hand, $T$ is a support $\tau^-$-tilting module if and only if $\add(T)$ is a support $\tau^-$-tilting subcategory of $\mmod \La$, see Propositions~\ref{EquivalentDefinitions} and \ref{DualEquivalentDefinitions}.

Suppose that $\SA$ is an abelian category with enough injective objects.
Then in this category all the dual definitions and results can be stated and proved, see Section~\ref{Sec:Dual}.
Using this symmetry, when $\SA$ is an abelian category with enough projective and enough injective objects, we introduce the notion of a $\tau$-$\tau^-$-quadruple and use it to create a map from support $\tau$-tilting to support $\tau^-$-tilting subcategories of $\SA$.
We show in Proposition~\ref{daggermap} that this map is exactly the dual of the dagger map defined in \cite[Theorem~2.15]{AIR} when $\SA=\mmod\La$.

\medskip
Besides $\mmod \La$, another prominent example of an abelian category with enough projective objects is $\Mod R$, the category of modules over an unitary associative ring $R$.

The theory of support $\tau$-tilting modules of \cite{AIR} is generalized to $\Mod R$ by Angeleri H\"{u}gel, Marks and Vitoria \cite{AMV}, where they introduced silting modules showing that finitely presented silting $\La$-modules coincide with the support $\tau$-tilting modules of \cite{AIR}. As the categorical dual of silting modules, Breaz and Pop introduced the notion of cosilting modules in $\Mod R$ and showed that finitely copresented cosilting $\La$-modules coincide with the support $\tau^-$-tilting modules.

We study the connection between silting, resp. cosilting, modules in $\Mod R$ with the $\tau$-tilting, resp. $\tau^-$-tilting subcategories of $\Mod R$. Furthermore, we characterize all support $\tau$-tilting subcategories of $\Mod R$. 
In particular, we provide a bijection between the equivalence classes of all support $\tau$-tilting subcategories of $\Mod R$ and the collection of all equivalent classes of certain $R$-modules, the so-called finendo quasitilting $R$-modules, see Theorem \ref{Bijection-Finendo}. It is known that all silting modules are finendo quasitilting. 
Dually, we also characterize all support $\tau^{-}$-tilting subcategories of $\Mod R$ by  providing a bijection between the equivalence classes of all support $\tau^-$-tilting subcategories of $\Mod R$ and the collection of all equivalent classes of the quasicotilting $R$-modules. 

Towards the end of the paper, we provide applications of this theory to the category of the representations of a finite and acyclic quiver in an abelian category $\SA$ with enough projective objects. Let $Q$ be such a quiver. It is known that the category $\Rep(Q, \SA)$ of representations of $Q$ over $\SA$ is again an abelian category with enough projective objects. See Section~\ref{Sec:Quivers} for more details.
We give a recipe to construct support $\tau$-tilting subcategories in $\Rep(Q, \SA)$ from certain support $\tau$-tilting subcategories of $\SA$. More explicitly, we have the following result.

\begin{theorem}[Theorem~\ref{ProduceTauTilting}]
Let $\SA$ be an abelian category with enough projective objects and $Q$ be a finite acyclic quiver.
Let $\ST$ be a support $\tau$-tilting subcategory of $\SA$ such that $\Fac(\ST)$ is closed with respect to the kernels of epimorphisms.
Then
\[\mathbb{T} =\add\lbrace e_i^{\rho}(T) \vert ~ i\in Q_0, T\in\ST\rbrace\]
is a support $\tau$-tilting subcategory of $\Rep(Q, \SA)$.
\end{theorem}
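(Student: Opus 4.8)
The plan is to reduce each requirement in the definition of a support $\tau$-tilting subcategory for $\mathbb{T}\subseteq\Rep(Q,\SA)$ to the corresponding requirement for $\ST\subseteq\SA$, using the functors recalled in Section~\ref{Sec:Quivers}. The inputs I would isolate first are: the evaluation functors $e_j\colon\Rep(Q,\SA)\to\SA$ are exact; each $e_i^{\rho}\colon\SA\to\Rep(Q,\SA)$ is adjoint to evaluation, with a natural isomorphism $\Hom_{\Rep(Q,\SA)}(e_i^{\rho}(A),X)\cong\Hom_{\SA}(A,X_i)$; the functor $e_i^{\rho}$ is exact (this is where finiteness and acyclicity of $Q$ enter, so that the sums over paths are finite) and carries projectives to projectives; and every projective of $\Rep(Q,\SA)$ is a direct summand of a finite sum $\bigoplus_i e_i^{\rho}(P_i)$ with each $P_i$ projective in $\SA$. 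Because $e_i^{\rho}$ is exact and preserves projectives, a projective resolution of $A$ maps to one of $e_i^{\rho}(A)$, and the Hom-adjunction then upgrades to
\[\Ext^1_{\Rep(Q,\SA)}(e_i^{\rho}(A),X)\cong\Ext^1_{\SA}(A,X_i).\]
This isomorphism is the engine of the whole argument.

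Next I would pin down the shape of $\Fac(\mathbb{T})$. Evaluating an epimorphism onto $Z\in\Fac(\mathbb{T})$ at a vertex $j$, and using that every stalk $e_i^{\rho}(T)_j$ lies in $\add(\ST)$, shows $Z_j\in\Fac(\ST)$ for all $j$; conversely, choosing $\add(\ST)$-epimorphisms onto the stalks and transporting them through the adjunction assembles a representation epimorphism $\bigoplus_i e_i^{\rho}(S_i)\to Z$, so that every representation with all stalks in $\Fac(\ST)$ lies in $\Fac(\mathbb{T})$. The hypothesis that $\Fac(\ST)$ is closed under kernels of epimorphisms is what makes this stalk-wise description compatible with the structure morphisms of representations: it guarantees that $\Fac(\mathbb{T})$ is a torsion class of $\Rep(Q,\SA)$ behaving correctly along the arrows of $Q$, so that one may pass freely between a representation and its stalks throughout the verification.

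With $\Fac(\mathbb{T})$ understood, the two axioms reduce cleanly. For the $\Ext$-vanishing, write an object of $\mathbb{T}$ as a summand of $\bigoplus_k e_{i_k}^{\rho}(T_k)$ and an object of $\Fac(\mathbb{T})$ as $Z$ with stalks in $\Fac(\ST)$; the displayed isomorphism turns $\Ext^1_{\Rep(Q,\SA)}(e_{i_k}^{\rho}(T_k),Z)$ into $\Ext^1_{\SA}(T_k,Z_{i_k})$, which vanishes by axiom~$(1)$ for $\ST$ since $Z_{i_k}\in\Fac(\ST)$. For the approximation axiom, it suffices by the structure of projectives to treat $\mathbb{P}=e_i^{\rho}(P)$ with $P$ projective in $\SA$, the case of a summand following by restricting the resulting sequence. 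Applying axiom~$(2)$ for $\ST$ gives an exact sequence $P\st{f}{\lrt}T^0\lrt T^1\lrt 0$ with $f$ a left $\ST$-approximation, and applying the exact functor $e_i^{\rho}$ yields the exact sequence
\[e_i^{\rho}(P)\st{e_i^{\rho}(f)}{\lrt}e_i^{\rho}(T^0)\lrt e_i^{\rho}(T^1)\lrt 0\]
with the last two terms in $\mathbb{T}$. That $e_i^{\rho}(f)$ is a left $\mathbb{T}$-approximation is read off the adjunction: a morphism $e_i^{\rho}(P)\to\mathbb{T}'$ corresponds to $P\to\mathbb{T}'_i$ with $\mathbb{T}'_i\in\add(\ST)$, which factors through $f$, hence the original factors through $e_i^{\rho}(f)$.

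Finally, contravariant finiteness of $\mathbb{T}$ follows the same pattern: given $X$, take right $\ST$-approximations $S_i\to X_i$ and assemble $\bigoplus_i e_i^{\rho}(S_i)\to X$, which the adjunction shows is a right $\mathbb{T}$-approximation. I expect the genuine difficulty to be the second step: correctly identifying $\Fac(\mathbb{T})$ and confirming it is a torsion class compatible with the quiver structure, since one must control the structure morphisms of representations and not merely their stalks. This is precisely the point at which closure of $\Fac(\ST)$ under kernels of epimorphisms is indispensable; once it is in hand, the two axioms and contravariant finiteness transfer from $\ST$ to $\mathbb{T}$ mechanically through the adjunction and the $\Ext$-isomorphism above.
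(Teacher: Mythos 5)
Your proposal rests on a wrong adjunction, and this single error propagates through every step. In $\Rep(Q,\SA)$ the functor $e_i^{\rho}$ is the \emph{right} adjoint of evaluation: the correct isomorphism is $\Hom_{\Rep(Q,\SA)}(X, e_i^{\rho}(A))\cong\Hom_{\SA}(X_i,A)$. The isomorphism you state, $\Hom_{\Rep(Q,\SA)}(e_i^{\rho}(A),X)\cong\Hom_{\SA}(A,X_i)$, is the adjunction for the \emph{left} adjoint $e_i^{\lambda}$, and it fails for $e_i^{\rho}$; maps out of $e_i^{\rho}(A)$ are governed by the further right adjoint $Re_i^{\rho}$, which the paper introduces precisely for this reason. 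Concretely, for $Q\colon 1\to 2$ one has $e_1^{\rho}(A)=(A\to 0)$, and $\Hom((A\to 0),X)\cong\Hom_{\SA}(A,\Ker X_{\alpha})$, not $\Hom_{\SA}(A,X_1)$. For the same reason $e_i^{\rho}$ preserves injectives, \emph{not} projectives: over the path algebra of $1\to 2$, $e_1^{\rho}(k)$ is the simple $S_1=(k\to 0)$, which is not projective, and your ``engine'' $\Ext^1_{\Rep(Q,\SA)}(e_i^{\rho}(A),X)\cong\Ext^1_{\SA}(A,X_i)$ is false there: $\Ext^1(S_1,S_2)=k$ while the formula predicts $\Ext^1_{k}(k,0)=0$. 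A telling symptom is that your verification of axiom $(1)$ never uses the hypothesis that $\Fac(\ST)$ is closed under kernels of epimorphisms; the paper's proof of that axiom goes through the long exact sequence of Lemma~\ref{Keller} and needs exactly that hypothesis, to see that the structure maps $Y_{\alpha}$ of any $Y\in\Fac(\mathbb{T})$ are epimorphisms with kernel in $\Fac(\ST)$, which makes the relevant connecting map an epimorphism and lets $\Ext^1_{\Rep(Q,\SA)}(X,Y)$ embed into $\bigoplus_{r}\Ext^1_{\SA}(X_r,Y_r)=0$.

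The remaining steps fail independently. The projective generators of $\Rep(Q,\SA)$ are the $e_i^{\lambda}(P)$, not summands of sums of $e_i^{\rho}(P_i)$, so your treatment of the approximation axiom addresses the wrong objects entirely; applying $e_i^{\rho}$ to $P\st{f}\lrt T^0\lrt T^1\lrt 0$ does not produce an approximation of any projective of $\Rep(Q,\SA)$. The paper instead constructs, for $e_i^{\lambda}(P)$, a sequence whose terms are sums of $e_{t(\rho_q)}^{\rho}(T^0)$, $e_{t(\rho_q)}^{\rho}(T^1)$ and $e_{s(\alpha)}^{\rho}(\Ker g)$ indexed by the longest paths $\rho_1,\dots,\rho_k$ out of $i$ and a set $\mathfrak{I}$ of side arrows, using the closure hypothesis a second time to split $0\lrt\Ker g\lrt T^0\lrt T^1\lrt 0$ and conclude $\Ker g\in\ST$ --- a genuinely nontrivial construction absent from your proposal. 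Your stalkwise description of $\Fac(\mathbb{T})$ is also false: the structure maps of each $e_j^{\rho}(T)$ are split epimorphisms, hence every object of $\mathbb{T}$, and of $\Fac(\mathbb{T})$, has epimorphic structure maps; so for $Q\colon 1\to 2$ the representation $(0\to F)$ with $0\neq F\in\Fac(\ST)$ has all stalks in $\Fac(\ST)$ yet lies outside $\Fac(\mathbb{T})$. Finally, in the contravariant-finiteness step a map $S_i\to X_i$ induces no morphism $e_i^{\rho}(S_i)\to X$ at all (the adjunction points the other way); one must take right $\ST$-approximations of $Re_i^{\rho}(X)$, as the paper does following \cite[Proposition 3.9]{BBOS}.
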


Let $\Prj(\SA)$ denote the category of all projective objects in $\SA$. In \cite[Proposition~3.9]{BBOS} the authors have shown that
\[\mathbb{T} =\add\lbrace e_i^{\rho}(P) \vert ~ i\in Q_0, P \in\Prj(\SA) \rbrace\]
is a tilting subcategory of $\SA$.
Then the previous result can be seen as a generalization of this result, since $\SA = \Fac(\Prj(\SA))$ is clearly closed under kernels of epimorphisms.

\medskip
There are other generalisations of tilting theory that we have not mentioned yet.
One of these generalizations consists on defining $n$-tilting subcategories, where $1$-tilting subcategories correspond to classical tilting subcategories, see Definition~\ref{HigherDef(BBOS)}.
By convention we consider $\Prj(\SA)$ as the $0$-tilting subcategory of $\SA$.
Using similar methods as in the proof of Theorem~\ref{ProduceTauTilting} we were able to show another generalization of \cite[Proposition~3.9]{BBOS}, which we believe might have interesting implications in the study of higher homological algebra, introduced by Iyama in \cite{Iyama1, Iyama2}.

\begin{theorem}[Theorem~\ref{ThHigherTilt}]
Let $\SA$ be an abelian category with enough projective objects.
Let $n$ be a non-negative integer and let $Q$ be a finite and acyclic quiver.
For an $n$-tilting subcategory $\ST$ of $\SA$ put
\[\ST' = \add(\lbrace e_i^{\rho}(T) \vert ~ i\in Q_0, T \in \ST\rbrace).\]
Then $\ST'$ is an $(n+1)$-tilting subcategory of $\Rep(Q, \SA)$.
\end{theorem}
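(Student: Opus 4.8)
The plan is to verify directly the three defining conditions of an $(n+1)$-tilting subcategory from Definition~\ref{HigherDef(BBOS)} for $\ST'$: bounded projective dimension $\pd_{\Rep(Q,\SA)}(\ST')\le n+1$, vanishing of higher self-extensions $\Ext^{k}_{\Rep(Q,\SA)}(\ST',\ST')=0$ for all $k\ge 1$, and the existence, for every projective object of $\Rep(Q,\SA)$, of a finite $\add(\ST')$-coresolution of length at most $n+1$. The whole argument rests on three homological facts about the vertex functors, all available from Section~\ref{Sec:Quivers}. First, since $Q$ is finite and acyclic each $e_i^{\rho}$ (and each left adjoint $e_i^{\lambda}$ to evaluation) is exact, because $(e_i^{\rho}M)_j=\bigoplus_{p\colon j\rt i}M$ is a finite biproduct of copies of $M$. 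Second, evaluation $e_i$ is exact and carries projectives of $\Rep(Q,\SA)$ to projectives of $\SA$, so resolving the first variable projectively and applying $\Hom$-adjunction yields
\[\Ext^{k}_{\Rep(Q,\SA)}(Z,e_i^{\rho}Y)\cong\Ext^{k}_{\SA}(Z_i,Y)\]
for all $k\ge 0$, $Z\in\Rep(Q,\SA)$ and $Y\in\SA$; note this uses only enough projectives. Third, the canonical presentation of a representation gives the bound $\pd_{\Rep(Q,\SA)}(X)\le \sup_{i\in Q_0}\pd_{\SA}(X_i)+1$.

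For the first two conditions I would argue as follows. Since $(e_i^{\rho}T)_j=\bigoplus_{p\colon j\rt i}T$ has projective dimension at most $\pd_{\SA}(T)\le n$ in $\SA$, the third fact immediately gives $\pd_{\Rep(Q,\SA)}(e_i^{\rho}T)\le n+1$, so passing to finite sums and summands settles the projective-dimension condition. For self-orthogonality, specialising the $\Ext$-formula to $Z=e_j^{\rho}T$ and using $(e_j^{\rho}T)_i=\bigoplus_{p\colon i\rt j}T$ gives
\[\Ext^{k}_{\Rep(Q,\SA)}(e_j^{\rho}T,e_i^{\rho}T')\cong\bigoplus_{p\colon i\rt j}\Ext^{k}_{\SA}(T,T'),\]
which vanishes for every $k\ge 1$ because $\ST$ is self-orthogonal in $\SA$. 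Additivity of $\Ext$ and closure under $\add$ then yield $\Ext^{k}_{\Rep(Q,\SA)}(\ST',\ST')=0$ for all $k\ge 1$.

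The coresolution condition is the crux. By the case $n=0$, that is, by \cite[Proposition~3.9]{BBOS}, every projective $\mathbb{P}$ of $\Rep(Q,\SA)$ fits in a short exact sequence $0\rt\mathbb{P}\rt E^0\rt E^1\rt 0$ with $E^0,E^1\in\add\{e_i^{\rho}(P)\mid i\in Q_0,\ P\in\Prj(\SA)\}$. On the other hand, since $\ST$ is $n$-tilting each projective $P$ of $\SA$ has an $\add(\ST)$-coresolution $0\rt P\rt T^0\rt\cdots\rt T^{n}\rt 0$, and applying the exact functor $e_i^{\rho}$ produces an $\add(\ST')$-coresolution of $e_i^{\rho}(P)$ of length at most $n$. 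Hence $E^0$ and $E^1$ have $\add(\ST')$-coresolution dimension at most $n$, and a dimension-shifting (horseshoe) argument along $0\rt\mathbb{P}\rt E^0\rt E^1\rt 0$ — legitimate because the self-orthogonality established above makes $\add(\ST')$ behave like a coresolving class — shows that the $\add(\ST')$-coresolution dimension of $\mathbb{P}$ is at most $\max\{n,n+1\}=n+1$, which is exactly the required coresolution.

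I expect the dimension-shifting step to be the main obstacle: one must splice the length-$1$ coresolution coming from \cite[Proposition~3.9]{BBOS} with the $e_i^{\rho}$-images of the length-$n$ coresolutions in $\SA$ into a single complex of length $n+1$ with all terms in $\add(\ST')$, and verify exactness at every spot, which is precisely where $\Ext^{\ge 1}_{\Rep(Q,\SA)}(\ST',\ST')=0$ is used to lift and compare the comparison maps. Two routine points must also be dispatched: that $e_i^{\rho}$ is genuinely exact and that the $\Ext$-formula can be derived with only enough projectives present (both handled above, via the canonical presentation and the adjunction applied to a projective resolution of the first variable), and that it suffices to treat the generating projectives $e_i^{\lambda}(P)$, since $\add(\ST')$-coresolutions are stable under direct sums and direct summands.
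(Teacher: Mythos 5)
Your treatment of conditions $(ii)$--$(iv)$ of Definition~\ref{HigherDef(BBOS)} is essentially the paper's own proof: your $\Ext$-formula is \cite[Proposition~5.2]{HJ}, which the paper cites for the self-orthogonality of $\ST'$; your bound $\pd_{\Rep(Q,\SA)}(X)\le\sup_{i}\pd_{\SA}(X_i)+1$ via the canonical presentation is exactly how the paper verifies condition $(iii)$ using \cite[Lemma~3.5]{BBOS}; and your splicing of the length-one coresolution of a projective generator with the $e^{\rho}$-images of the length-$n$ coresolutions in $\SA$ is the paper's mapping-cone construction for condition $(iv)$. The genuine gap is that Definition~\ref{HigherDef(BBOS)} has \emph{four} conditions and you verify only three: you never address condition $(i)$, the contravariant finiteness of $\ST'$ in $\Rep(Q,\SA)$ (you even write ``the three defining conditions''). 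What your argument establishes is therefore only that $\ST'$ is a \emph{weak} $(n+1)$-tilting subcategory. This condition is not automatic and needs its own argument, which the paper supplies by reference: given $X\in\Rep(Q,\SA)$, choose for each $i\in Q_0$ a right $\ST$-approximation $T^i\lrt Re_i^{\rho}(X)$, where $Re_i^{\rho}$ is the right adjoint of $e_i^{\rho}$; by adjunction these assemble into a morphism $\bigoplus_{i\in Q_0}e_i^{\rho}(T^i)\lrt X$, which is a right $\ST'$-approximation exactly as in \cite[Proposition~3.9]{BBOS} and in the proof of Theorem~\ref{ProduceTauTilting}. Note that this is the only place where contravariant finiteness of $\ST$ in $\SA$ enters, i.e.\ where ``$n$-tilting'' rather than ``weak $n$-tilting'' is used.

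A secondary imprecision: the comparison chain map in your splicing step does not follow from $\Ext^{\ge 1}_{\Rep(Q,\SA)}(\ST',\ST')=0$ alone, as you assert. Lifting $\theta$ degree by degree requires the vanishing of $\Ext^1$ of the \emph{cosyzygies} of the middle coresolution against the terms of the right-hand one, and these cosyzygies, of the form $e_j^{\rho}$ applied to sums of copies of $K^{\ell}$ (where $K^0=P$ and $K^{\ell+1}=\Coker(K^{\ell}\rt T^{\ell})$), do not lie in $\ST'$. The needed vanishing does hold: a downward induction along the sequences $0\rt K^{\ell}\rt T^{\ell}\rt K^{\ell+1}\rt 0$, using $\Ext^{\ge 1}_{\SA}(\ST,\ST)=0$, gives $\Ext^{\ge 1}_{\SA}(K^{\ell},\ST)=0$ for $\ell\ge 1$, and \cite[Proposition~5.2]{HJ} transfers this to $\Rep(Q,\SA)$. (Alternatively, since the sequence $0\rt X\rt\bigoplus_{j\in Q_0}e_j^{\rho}(X_j)\rt\bigoplus_{\alpha}e_{s(\alpha)}^{\rho}(X_{t(\alpha)})\rt 0$ of \cite[Lemma~3.5]{BBOS} is natural in $X$, applying it to the complex $e_i^{\la}(P)\rt e_i^{\la}(T^0)\rt\cdots\rt e_i^{\la}(T^n)$ produces the chain map with no lifting at all.) The paper compresses this point into the phrase ``relative injective''; with this repair, and the missing contravariant-finiteness check added, your argument coincides with the paper's.
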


Throughout the paper, several examples are provided using the theory of persistent modules, which are central objects of study in topological data analysis (TDA).
We refer the reader to the survey \cite{Ca} by Carlsson, or the book \cite{Ou} by Oudot, for an introduction to topological data analysis and connections to quiver representations.

The paper is structured as follows. In Section~\ref{Sec:Preliminaries} we fix notation and give the necessary background for the rest of the paper.
In Section~\ref{Sec:TauTiltSubcategories} we define support $\tau$-tilting subcategories in abelian categories with enough projective objects and we give some of its basic properties.
Then, in Section~\ref{Sec:TauCotorsionTorsionTriples} we introduce and study the notion of $\tau$-cotorsion torsion triples. We also compare $\tau$-cotorsion torsion triples with the left weak cotorsion torsion triples of \cite{BZ}.
In Section~\ref{Sec:Bijection} we show the bijection between support $\tau$-tilting subcategories and $\tau$-cotorsion torsion triples.
Section~\ref{Sec:Dual} is a compilation of dual definitions and results.
Later, in Section~\ref{Sec:Quadruples} we relate support $\tau$-tilting subcategories with support $\tau^-$-tilting subcategories via $\tau$-$\tau^-$-quadruples.
In Section~\ref{Sec:Connection} we study support $\tau$- and $\tau^-$- tilting subcategories in $\Mod R$, where $R$ is a unitary associative ring, and show that how they are connected to the silting and cosilting theory. We end the paper in Section~\ref{Sec:Quivers} where we study support $\tau$-tilting subcategories and $n$-tilting subcategories in the category $\Rep(Q, \SA)$ of quiver representations over $\SA$.

\section{Preliminaries}\label{Sec:Preliminaries}
Let $\SA$ be an abelian category.
In this paper by subcategory we always mean a full subcategory.
Let $\SX$ be a subcategory of $\SA$.
A morphism $\varphi : X \lrt A$, where $A$ is an object of $\SA$, is called a right $\SX$-approximation of $A$ if, $X \in \SX$ and for every $X' \in \SX$, the induced morphism $\SA(X', X) \lrt \SA(X', A) \lrt 0$ of abelian groups is exact.
We say that $\SX$ is a contravariantly finite subcategory of $\SA$ if every object $A$ of $\SA$ admits a right $\SX$-approximation.
Dually, the notions of left $\SX$-approximations and covariantly finite subcategories are defined.
Moreover we say that $\SX$ is a functorially finite subcategory of $\SA$ if it is both a contravariantly finite and a covariantly finite subcategory of $\SA$.
If $\SX$ is closed under taking finite direct sums and direct summands we say that it is additively closed.

Let $n$ be a non-negative integer. We define
\[ \SX^{\perp_n}:=\lbrace A\in\SA ~ \vert ~\Ext^n_\SA(\SX, A)=0\rbrace,\]
\[ {}^{\perp_n}\SX:=\lbrace A\in\SA ~ \vert ~\Ext^n_\SA( A, \SX)=0 \rbrace.\]
Note that $\Ext^0$ is just the usual Hom-functor.

Moreover, we set
\[ \SX^{\perp}:=\lbrace A\in\SA ~ \vert ~\Ext^i_\SA(\SX, A)=0, ~~~\forall~ i\geq 1\rbrace,\]
\[ {}^{\perp}\SX:=\lbrace A\in\SA ~ \vert ~\Ext^i_\SA( A, \SX)=0, ~~~\forall~ i\geq 1\rbrace.\]

\begin{definition}\label{HigherDef(BBOS)}
Let $\SA$ be an abelian category with enough projective objects. Let $n$ be a non-negative integer.
An additively closed subcategory $\ST$ of $\SA$ is called an $n$-tilting subcategory if
\begin{itemize}
\item[$(i)$] $\ST$ is a contravariantly finite subcategory of $\SA$.
\item[$(ii)$] $\Ext^i_\SA(T_1, T_2)=0$, for all $T_1, T_2\in\ST$ and all $i\geq 1$.
\item[$(iii)$] Every object $T\in\ST$ has projective dimension at most $n$.
\item[$(iv)$] For every projective object $P$ in $\SA$, there exists a short exact sequence
\[0\lrt P\st{f}{\lrt}T^0\lrt T^1\lrt  \cdots\lrt T^n\lrt 0\]
with $T^i\in\ST$.
\end{itemize}
If $\ST$ only satisfies the conditions $(ii)-(iv)$, it is called a weak $n$-tilting subcategory of $\SA$.
\end{definition}

\begin{remark}
It follows, by breaking out the exact sequence of $(iv)$ to short exact sequences and using the vanishing of Ext-groups in $(ii)$ of the previous definition, that the map $f: P \lrt T^0$ is a left $\ST$-approximation of $P$. Moreover, it is easy to see  that the category $\Prj(\SA)$ of all projective objects in $\SA$ is the only $0$-tilting subcategory of $\SA$. In this paper, when we say tilting subcategory we mean a $1$-tilting subcategory.
\end{remark}

\s {\sc Cotorsion torsion triple.} Here we recall the notion of torsion pairs, cotorsion pairs and cotorsion torsion triples. Let us begin by the following classical definition of Dickson \cite{Dickson}.

\begin{definition}
Let $\SA$ be an abelian category.
A pair $(\ST,\SF)$ of full subcategories of $\SA$ is called a torsion pair if $\Hom_{\SA}(\ST,\SF)=0$ and for every $A \in \SA$ there is a short exact sequence
$$0 \lrt tA \lrt A \lrt fA \lrt 0$$
such that $tA \in \ST$ and $fA \in \SF$.
\end{definition}

It is well known that $\ST$ and $\SF$ determine each others and the short exact sequence is functorial, see for instance \cite[\S I.1]{BR}.

\begin{definition}(see \cite{Sa})
Let $\SA$ be an abelian category with enough projective objects.
A pair $(\SC, \SD)$ of full subcategories of $\SA$ is called a cotorsion pair if $\SC = {}^{\perp_1}\SD$, $\SD = \SC^{\perp_1}$ and for every object $A \in \SA$, there are short exact sequences
\[0 \lrt D \lrt C \lrt A \lrt 0 \]
\[ 0 \lrt A \lrt D' \lrt C' \lrt 0\]
where $C$ and $C'$ are in $\SC$ and $D$ and $D'$ are in $\SD$.
\end{definition}
This notion sometimes called complete cotorsion pair in the literature.
See \cite[Remark~2.7]{BBOS} for more details.

\begin{remark}
It is immediate from the definition that in a cotorsion pair $(\SC, \SD)$ both $\SC$ and $\SD$ are closed under extensions.
\end{remark}

Let $\SA$ be an abelian category with enough projective and enough injective objects.
In view of \cite[ Lemma V.3.3]{BR}, a pair $(\SC, \SD)$ of full subcategories of $\SA$ is a cotorsion pair if and only if the following conditions are satisfied:
\begin{itemize}
\item[$1.$] $\SC = {}^{\perp_1}\SD$,
\item[$2.$] for every object $A \in \SA$, there exists a short exact sequence
\[ 0 \lrt A \st{f}\lrt D \lrt C \lrt 0,\]
 where $D \in \SD$ and $C \in \SC$.
\end{itemize}

Following proposition shows that when $\SD$ is closed under factors it is enough, in the above definition, to check the Condition $2$ only for projective objects.

\begin{proposition}\label{motivation}
Let $\SA$ be an abelian category with enough projective and enough injective objects.
Let $(\SC, \SD)$ be a pair of full subcategories of $\SA$ such that $\SD$ is closed under factors.
Then $(\SC, \SD)$ is a cotorsion pair if and only if
\begin{itemize}
\item[$1.$] $\SC = {}^{\perp_1}\SD$,
\item[$2.$] for every projective object $P \in \SA$, there exists a short exact sequence
\[ 0 \lrt P \st{f}\lrt D \lrt C \lrt 0,\]
where $D \in \SD\cap\SC$ and $C \in \SC$.
\end{itemize}
\end{proposition}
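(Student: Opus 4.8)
The plan is to reduce everything to the reformulation of ``cotorsion pair'' recalled immediately above the statement (see \cite[Lemma~V.3.3]{BR}): in a category with enough projective and enough injective objects, $(\SC,\SD)$ is a cotorsion pair if and only if $\SC={}^{\perp_1}\SD$ and \emph{every} object $A$ fits into a short exact sequence $0\lrt A\lrt D\lrt C\lrt 0$ with $D\in\SD$ and $C\in\SC$. With this criterion available, condition~$1$ is exactly the first clause, so the whole content is to reconcile condition~$2$ (stated only for projectives, but with the stronger requirement $D\in\SC\cap\SD$) with the general sequence (stated for all objects, but asking only $D\in\SD$). The forward implication is a short Ext-computation; the reverse is the substantial part and is where I would invest the work.

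For the forward direction, assume $(\SC,\SD)$ is a cotorsion pair, so condition~$1$ holds by definition. Applying the criterion to a projective object $P$ yields $0\lrt P\st{f}{\lrt}D\lrt C\lrt 0$ with $D\in\SD$ and $C\in\SC$. Since $P$ is projective we have $\Ext^1_\SA(P,-)=0$, hence $P\in{}^{\perp_1}\SD=\SC$; then $D\in\SC$ follows either from closure of $\SC$ under extensions (the Remark above) applied to $P,C\in\SC$, or by applying $\Hom_\SA(-,\SD)$ to the sequence and reading $\Ext^1_\SA(D,\SD)=0$ off from $\Ext^1_\SA(P,\SD)=\Ext^1_\SA(C,\SD)=0$. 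Thus $D\in\SC\cap\SD$, which is condition~$2$.

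For the reverse direction I assume conditions~$1$ and~$2$ and must manufacture, for an arbitrary $A\in\SA$, a sequence $0\lrt A\lrt D\lrt C\lrt 0$ with $D\in\SD$ and $C\in\SC$; condition~$1$ then lets me invoke the criterion to conclude $(\SC,\SD)$ is a cotorsion pair. Since $\SA$ has enough projectives, choose $0\lrt K\lrt P\st{\pi}{\lrt}A\lrt 0$ with $P$ projective, and apply condition~$2$ to $P$ to obtain $0\lrt P\st{f}{\lrt}D_0\lrt C_0\lrt 0$ with $D_0\in\SC\cap\SD$ and $C_0\in\SC$. I then form the pushout $B$ of the span $A\st{\pi}{\longleftarrow}P\st{f}{\lrt}D_0$. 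Because $f$ is a monomorphism and $\pi$ an epimorphism, the standard behaviour of pushouts in an abelian category gives that the induced map $A\to B$ is a monomorphism with $\Coker(A\to B)\cong\Coker(f)=C_0$, while the induced map $D_0\to B$ is an epimorphism with $\Ker(D_0\to B)\cong\Ker(\pi)=K$. This produces the two short exact sequences
\[0\lrt A\lrt B\lrt C_0\lrt 0 \qquad\text{and}\qquad 0\lrt K\lrt D_0\lrt B\lrt 0.\]
The second exhibits $B$ as a factor object of $D_0\in\SD$, so $B\in\SD$ precisely because $\SD$ is closed under factors; the first is then the sequence demanded by the criterion, with $B\in\SD$ and $C_0\in\SC$.

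The one genuinely delicate point, and the step I would check most carefully, is the identification of the kernel and cokernel of the two legs of the pushout, since this is exactly where both hypotheses are consumed: the epimorphism $\pi$ supplied by ``enough projectives'' yields the quotient presentation $0\lrt K\lrt D_0\lrt B\lrt 0$, and closure of $\SD$ under factors upgrades $D_0\in\SD$ to $B\in\SD$. Everything else is routine diagram bookkeeping together with the cited reformulation of cotorsion pairs.
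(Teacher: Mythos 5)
Your proof is correct and matches the paper's argument essentially step for step: for the direction from cotorsion pair to conditions~$1$--$2$ you use projectivity to get $P\in{}^{\perp_1}\SD=\SC$ and extension-closure of $\SC$ to upgrade $D$ to $\SC\cap\SD$, and for the converse you push the sequence of condition~$2$ out along an epimorphism $P\lrt A$ and use factor-closure of $\SD$ to place the pushout in $\SD$, concluding via the \cite[Lemma~V.3.3]{BR} criterion, exactly as the paper does. The only difference is cosmetic: you additionally identify $\Ker(D_0\lrt B)\cong K$, which is correct (the square is bicartesian since $f$ is monic) but not needed for the argument.
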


\begin{proof}
We start by showing the necessity.
Let $A$ be an arbitrary object of $\SA$. Since $\SA$ has enough projective objects, we have an epimorphism $P \lrt A \lrt 0$.
By assumption $P$ fits into the short exact sequence
\[ 0 \lrt P \st{f}\lrt D \lrt C \lrt 0.\]
The pushout diagram
\[\xymatrix{
 0 \ar[r] & P\ar[d]^{f} \ar[r] & D \ar[d]^{g} \ar[r] &C \ar@{=}[d]\ar[r] & 0 \\
 0 \ar[r] & A \ar[d] \ar[r] & {U} \ar[d]\ar[r] &C \ar[r] & 0,\\
 &  0 & 0  }
\]
induces the short exact sequence $0 \lrt A \lrt U \lrt C \lrt 0$, where $U \in \Fac(\SD)$.
Since $\SD$ is closed under factors, we deduce that $U \in \SD$ and hence this is the desired short exact sequence for $A$.

For the sufficiency, let $P$ be a projective object in $\SA$.
Then there exists a short exact sequence
\[ 0 \lrt P \st{f}\lrt D \lrt C \lrt 0\]
where $D \in \SD$ and $C \in \SC$.
Since $P$ is projective, then $P \in {}^{\perp_1}\SD = \SC$.
Hence $D \in \SC$ because $\SC$ is closed under extensions and the result follows.
\end{proof}

Following lemma shows that if the subcategory $\SC$ of a cotorsion pair $(\SC, \SD)$ of $\SA$ is closed with respect to factors, then $\SC \cap \SD$ is contravariantly finite in $\SA$.

\begin{lemma}\label{ContraFinite}
{Let $(\SC, \SD)$ be a cotorsion pair in $\SA$ such that $\SD$ is closed under factors. Then $\SC \cap \SD$ is a contravariantly finite subcategory of $\SA$. }
\end{lemma}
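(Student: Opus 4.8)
The plan is to produce, for an arbitrary object $A$ of $\SA$, a right $(\SC\cap\SD)$-approximation by first handling the objects that already lie in $\SD$ and then reducing the general case to this one.

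First I would record the behaviour of the special precover sequences of the cotorsion pair. For any object $X$ the completeness of $(\SC,\SD)$ gives a short exact sequence
\[0 \lrt D \lrt C \st{f}{\lrt} X \lrt 0\]
with $C\in\SC$ and $D\in\SD$. Applying $\Hom_\SA(X',-)$ for $X'\in\SC$ and using $\SC={}^{\perp_1}\SD$ (so that $\Ext^1_\SA(X',D)=0$), the sequence $\Hom_\SA(X',C)\lrt\Hom_\SA(X',X)\lrt\Ext^1_\SA(X',D)=0$ shows that $f$ is a right $\SC$-approximation of $X$. The key observation is that when $X\in\SD$, the extension-closure of $\SD$ (automatic in any cotorsion pair) forces $C\in\SD$ as well, so that $C\in\SC\cap\SD$. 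Since $\SC\cap\SD\subseteq\SC$, the same $f$ is then a right $(\SC\cap\SD)$-approximation of $X$. Thus every object of $\SD$ admits a right $(\SC\cap\SD)$-approximation.

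Next I would pass from $\SD$ to all of $\SA$ using the hypothesis that $\SD$ is closed under factors. Being closed under factors (quotients) and under extensions, $\SD$ is the torsion class of a torsion pair $(\SD,\SF)$ on $\SA$; write $tA\hookrightarrow A$ for the torsion subobject, so that $tA\in\SD$ and $\Hom_\SA(\SD,A/tA)=0$. Consequently every morphism from an object of $\SC\cap\SD\subseteq\SD$ into $A$ factors through $tA$. Hence, if $g\colon C\to tA$ is the right $(\SC\cap\SD)$-approximation of $tA$ constructed in the previous step, the composite $C\to tA\hookrightarrow A$ is a right $(\SC\cap\SD)$-approximation of $A$: given $X'\in\SC\cap\SD$ and $h\colon X'\to A$, one first factors $h$ through $tA$ and then through $g$. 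This shows $\SC\cap\SD$ is contravariantly finite.

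The step I expect to be the main obstacle is the passage from $\SD$-objects to arbitrary objects, that is, justifying that $\SD$ is genuinely the torsion class of a torsion pair so that $tA$ exists; the two facts used about the precover sequences are routine consequences of $\SC={}^{\perp_1}\SD$ and the extension-closure of $\SD$, whereas the real input is that closure of $\SD$ under factors (together with the extension-closure) furnishes the torsion radical $t$, after which the factorization of $\SD$-maps through $tA$ is exactly what orients the composite as an approximation in the correct (contravariant) direction. A direct route avoiding torsion pairs, namely extending the $\SC$-approximation $f\colon C\to A$ along a left $(\SC\cap\SD)$-approximation of $C$, runs into an obstruction in $\Ext^1_\SA(-,A)$ that need not vanish, while naive pushouts only yield morphisms out of $A$ rather than into it; this is why the torsion-theoretic reduction appears to be essential.
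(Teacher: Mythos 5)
Your proof is correct and takes essentially the same route as the paper: both observe that $\SD$ is a torsion class, take the special short exact sequence $0 \lrt D \lrt C \lrt tA \lrt 0$ ending at the torsion subobject (so closure of $\SD$ under extensions forces $C \in \SC\cap\SD$), and check that the composite $C \lrt tA \hookrightarrow A$ is a right $(\SC\cap\SD)$-approximation by factoring any map from $\SC\cap\SD$ through $tA$ and then lifting along $C \lrt tA$ via $\Ext^1_\SA(X,D)=0$. Your only deviation is presentational, first producing approximations for all objects of $\SD$ and then specializing to $tA$, where the paper applies the precover sequence to $tA$ directly; and the torsion-radical existence point you flag is glossed over in the paper in exactly the same way.
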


\begin{proof}
Since $(\SC, \SD)$ is a cotorsion pair, it follows easily that $\SD$ is closed under extensions and products. So $\SD$ is indeed a torsion class.
Now, since $(\SC, \SD)$ is a cotorsion pair, there exists a short exact sequence
\[ 0\lrt D \lrt C\st{\imath}\lrt tA\lrt 0\]
such that $D \in\SD$ and $C\in\SC$. Here, since $D, tA\in\ST$ and $\ST$ is closed under extensions, we get $C \in \SC \cap \SD$. We claim that the composition $C \st{\imath}\lrt tA\st{\jmath}\lrt A $ is a right $\SC \cap \SD$-approximation. To show the claim, let $X\st{\ell}\lrt A$ be a morphism with $X \in \SC \cap \SD$. Since $X \in \SD$ and every map from $\SD$ to $A$, factors through $tA$, the morphism $\ell$ factors through $\jmath$. Now since $X \in \SC$ and $\Ext^1_A(X, D)=0$, the morphism $\ell$ factors through $\jmath\imath$. Hence the claim is proved.
 \end{proof}

\begin{definition}
Let $\SA$ be an abelian category with enough projective objects.
A triple $(\SC, \ST, \SF)$  of full subcategories in $\SA$ is called a {cotorsion torsion triple}, if $(\SC, \ST)$ is a cotorsion pair and $(\ST, \SF)$ is a torsion pair.
\end{definition}

It has been recently shown in \cite{BBOS} that cotorsion torsion triples and tilting subcategories are closely related.
The explicit relation between these two concepts is as follows.

\begin{theorem}\cite[Theorem 2.29]{BBOS}\label{2.29}
Let $\SA$ be an abelian category with enough projective objects. Then there is a bijection

\[\lbrace \mbox{tilting subcategories}\rbrace ~~~~\longleftrightarrow~~~~ \lbrace \mbox{cotorsion torsion triples}\rbrace\]
 $\  \  \  \  \  \ \ \ \ \ \ \ \ \ \ \ \ \ \ \ \ \ \ \ \ \  \ \ \ \ \ \ \ \ \ \ \ \ \ \ \ \ \  \ \ \ \ \ \ \ST~~~~\longrightarrow~~~~~({}^{\perp_1}{(\rm{Fac}(\ST))}, \rm{Fac}(\ST), \ST^{\perp_0})$
\[\SC\cap\ST~~\longleftarrow ~~~(\SC, \ST, \SF)\]
where $\rm{Fac}(\ST)$ is the  full subcategory of $\SA$ consisting of factors of objects in $\ST$.
\end{theorem}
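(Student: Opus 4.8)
The plan is to verify that both assignments are well defined, i.e.\ land among the claimed objects, and then that they are mutually inverse, with the genuine work concentrated in the last step. Throughout I write $\SD=\Fac(\ST)$ for a tilting subcategory $\ST$, and I record a cotorsion torsion triple as $(\SC,\SD,\SF)$, where $(\SC,\SD)$ is a cotorsion pair and $(\SD,\SF)$ a torsion pair. The foundational observation, used repeatedly, is that $\ST\subseteq{}^{\perp_1}\Fac(\ST)$: given $T\in\ST$ and $M\in\Fac(\ST)$, choose $0\lrt K\lrt T'\lrt M\lrt 0$ with $T'\in\ST$; then $\Ext^1_\SA(T,M)$ sits between $\Ext^1_\SA(T,T')=0$ (condition $(ii)$) and $\Ext^2_\SA(T,K)=0$ (condition $(iii)$, since $\pd T\le 1$), and hence vanishes. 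The same computation gives $\ST\subseteq{}^{\perp}\Fac(\ST)$.

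First I would check that $\Phi(\ST)=({}^{\perp_1}\Fac(\ST),\Fac(\ST),\ST^{\perp_0})$ is a cotorsion torsion triple. For the torsion pair $(\Fac(\ST),\ST^{\perp_0})$: the subcategory $\Fac(\ST)$ is closed under quotients by construction and under extensions (pull an extension back along an epimorphism $T'\twoheadrightarrow M_2$ with $T'\in\ST$ and split it using $\Ext^1_\SA(\ST,\Fac(\ST))=0$), while $\Hom_\SA(\Fac(\ST),\ST^{\perp_0})=0$ is immediate; the torsion subobject of an object $A$ is the image $tA$ of a right $\ST$-approximation (available since $\ST$ is contravariantly finite), and $A/tA\in\ST^{\perp_0}$ because any $T\to A/tA$ with $T\in\ST$ lifts through $A\to A/tA$ (the obstruction lies in $\Ext^1_\SA(T,tA)=0$) and any lift factors through the approximation, hence lands in $tA$. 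For the cotorsion pair $({}^{\perp_1}\Fac(\ST),\Fac(\ST))$ the equality $\SC={}^{\perp_1}\SD$ is the definition and $\SD\subseteq\SC^{\perp_1}$ is immediate; the two approximation sequences are produced first for a projective $P$ from the tilting coresolution $0\lrt P\lrt T^0\lrt T^1\lrt 0$ of condition $(iv)$, which (as $T^0,T^1\in\SC\cap\SD$) is a special $\SD$-preenvelope, and then for an arbitrary $A$ by the usual pushout constructions using enough projective objects and closure of $\SD$ under factors, exactly as in the necessity part of Proposition~\ref{motivation}; this also yields $\SC^{\perp_1}\subseteq\SD$.

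Next I would check that $\Psi(\SC,\SD,\SF)=\SC\cap\SD$ is a tilting subcategory. It is additively closed (both $\SC={}^{\perp_1}\SD$ and the torsion class $\SD$ are closed under summands) and contravariantly finite by Lemma~\ref{ContraFinite}, since $\SD$ is closed under factors. Degree-one self-orthogonality is automatic: for $X_1,X_2\in\SC\cap\SD$ we have $X_1\in\SC={}^{\perp_1}\SD$ and $X_2\in\SD$, so $\Ext^1_\SA(X_1,X_2)=0$. Condition $(iv)$ is recovered from the cotorsion coenvelope $0\lrt P\lrt D'\lrt C'\lrt 0$ of a projective $P$ (with $D'\in\SD$, $C'\in\SC$): here $D'\in\SC$ as well, being an extension of $C'\in\SC$ by $P\in\SC$, and $C'=D'/P\in\SD$ as a quotient of $D'\in\SD$, so both middle terms lie in $\SC\cap\SD$. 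The remaining point, that every object of $\SC\cap\SD$ has projective dimension at most $1$ (which then forces the higher self-orthogonality of $(ii)$), I would extract from these length-one $\ST$-coresolutions of projectives by dimension shifting together with the degree-one self-orthogonality; this is the first place where a nontrivial relative-homological computation enters.

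Finally, mutual inversion. The composite $\Phi\Psi$ is the easy direction: for a triple $(\SC,\SD,\SF)$ one shows $\Fac(\SC\cap\SD)=\SD$ --- the inclusion $\subseteq$ because $\SD$ is closed under factors, and $\supseteq$ because each $M\in\SD$ receives an epimorphism from the middle term $C$ of its cotorsion sequence $0\lrt D\lrt C\lrt M\lrt 0$, where $C\in\SC$ also lies in $\SD$ (an extension of $M$ by $D\in\SD$) and hence in $\SC\cap\SD$ --- after which ${}^{\perp_1}\Fac(\SC\cap\SD)={}^{\perp_1}\SD=\SC$ and $(\SC\cap\SD)^{\perp_0}=\SD^{\perp_0}=\SF$ follow from the cotorsion- and torsion-pair axioms. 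The composite $\Psi\Phi$ requires ${}^{\perp_1}\Fac(\ST)\cap\Fac(\ST)=\ST$; the inclusion $\supseteq$ is the foundational observation, and the reverse inclusion is the main obstacle. For $X\in\Fac(\ST)$ with $\Ext^1_\SA(X,\Fac(\ST))=0$ I would take an epic right $\ST$-approximation $T\twoheadrightarrow X$ (epic because $X\in\Fac(\ST)$) with kernel $Y$; applying $\Hom_\SA(\ST,-)$ to $0\lrt Y\lrt T\lrt X\lrt 0$ and using that the approximation is right together with $\Ext^1_\SA(\ST,\ST)=0$ shows $Y\in\ST^{\perp_1}$. To conclude I must show $\Ext^1_\SA(X,Y)=0$, so that the sequence splits and $X$ becomes a summand of $T\in\ST$; this I would obtain by decomposing $Y$ along the torsion pair $(\Fac(\ST),\ST^{\perp_0})$ --- the torsion part contributes nothing since $X\in{}^{\perp_1}\Fac(\ST)$ --- and controlling the torsion-free part by means of the $\pd\le 1$ computation established for $\Psi$. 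Pinning down this last vanishing is precisely where the argument is most delicate, and I expect it to be the crux of the whole proof.
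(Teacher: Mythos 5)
Most of your outline is sound and, up to the final step, runs parallel to the argument of \cite{BBOS} (note that the paper does not reprove this theorem itself: it quotes it, and its own Theorem~\ref{Bijection} recovers the tilting/cotorsion restriction from the $\tau$-version, outsourcing the key equality ${}^{\perp_1}\Fac(\ST)\cap\Fac(\ST)=\ST$ to Lemma~\ref{IJY}, i.e.\ to \cite[Proposition~5.3]{IJY} via the functor-category equivalence $\SA\simeq\mmod\Prj(\SA)$). The genuine gap is exactly at the point you flag as the crux. Having reduced $\Psi\Phi=1$ to splitting $0\lrt Y\lrt T\lrt X\lrt 0$, you propose to get $\Ext^1_\SA(X,fY)=0$ for the torsion-free part $fY\in\ST^{\perp_0}$ ``by means of the $\pd\le 1$ computation.'' This cannot work as stated: $\pd X\le 1$ kills $\Ext^2_\SA(X,-)$, not a first Ext-group, and none of your hypotheses say anything in degree one about objects of the torsion-free class. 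The missing idea is \cite[Proposition~2.22]{BBOS} (quoted in the paper just before the counterexample showing it fails for $\tau$-tilting): for a (weak) tilting subcategory, $\ST^{\perp_1}=\Fac(\ST)$. You already proved $Y\in\ST^{\perp_1}$; the inclusion $\ST^{\perp_1}\subseteq\Fac(\ST)$ follows from the same pushout trick you use elsewhere --- for $M\in\ST^{\perp_1}$ take an epimorphism $P\lrt M$ with $P$ projective, push the coresolution $0\lrt P\lrt T^0\lrt T^1\lrt 0$ out along it to get $0\lrt M\lrt U\lrt T^1\lrt 0$ with $U\in\Fac(T^0)$, and split it using $\Ext^1_\SA(T^1,M)=0$. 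Then $Y\in\Fac(\ST)$, so $\Ext^1_\SA(X,Y)=0$ directly from $X\in{}^{\perp_1}\Fac(\ST)$, no torsion decomposition needed. (Your decomposition route can also be completed with this lemma: $\pd\ST\le 1$ makes $\ST^{\perp_1}$ quotient-closed, so $fY\in\ST^{\perp_1}\cap\ST^{\perp_0}=\Fac(\ST)\cap\ST^{\perp_0}=0$; but without the lemma the vanishing you need is simply not available.)

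A secondary soft spot of the same kind: for $\Psi$ you claim $\pd\le 1$ for objects of $\SC\cap\SD$ ``by dimension shifting from the length-one $\ST$-coresolutions of projectives together with degree-one self-orthogonality.'' Those two data alone do not bound projective dimension (a first syzygy of $X\in\SC\cap\SD$ only satisfies $\Ext^1_\SA(K,D)\cong\Ext^2_\SA(X,D)$, which is circular). The correct mechanism is \cite[Lemma~2.12]{BBOS}, also quoted in the paper: it uses the special coenvelopes $0\lrt A\lrt D\lrt C\lrt 0$ of \emph{arbitrary} objects $A$, together with closure of $\SD$ under factors. Since you have the full complete cotorsion pair at hand, this step is citable and hence fixable, but the mechanism you sketch would not produce it. Everything else --- the torsion pair $(\Fac(\ST),\ST^{\perp_0})$ via right approximations, the Salce-type pushouts producing both approximation sequences, Lemma~\ref{ContraFinite} for contravariant finiteness, and the easy composite $\Phi\Psi=1$ via $\Fac(\SC\cap\SD)=\SD$ --- is correct and matches the standard route.
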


\s {\sc Pointwise finite dimensional representations.}
Let $k$ be a field and $(\CX, \leq)$ be a poset.
Consider $\CX$ as a category.
A persistence module is a (covariant and additive) functor $V: \CX \to \Mod k$ from $\CX$ to $\Mod k$, the category of $k$-vector spaces.
A persistence module $V$ is called pointwise finite dimensional representation of $\CX$ if it is a functor from $\CX$ to $\mmod k$, the category of finite dimensional vector spaces.

The abelian category of pointwise finite dimensional representations will be denoted by $\Rep_k^{\rm{pfd}}\CX$.
A subset $\CC$ of $\CX$ is called a convex subset if for every $x \leq y\in\CC$, $x\leq z\leq y$ implies $z\in\CC$. Let $\CC$ be a convex subset of $\CX$. The constant representation $k_\CC$ is defined by $k_\CC(x)=k$, for $x\in\CC$, $k_\CC(x)=0$, for $x\not\in \CC$ and $k_\CC(x\leq y)=\rm{id}_k$.
Set $P_x=k_{\lbrace y\in\CX \vert \ y\geq x\rbrace}.$
As application of Yoneda's Lemma one can see that, for every $x \in \CX$, $P_x$ is a projective object of $\Rep_k^{\rm{pfd}}\CX$ and vice versa all projective modules are of this form, see e.g. \cite[\S 2]{BBOS}.
A representation $V\in\Rep_k^{\rm{pfd}} \CX$ is called finitely generated if there exists an epimorphism of functors $\bigoplus_{i\in J} P_{x_i}\lrt V$, where $J$ is a finite indexing set.
Moreover, if the kernel of this epimorphism is again finitely generated, it is called finitely presented.
We denote the full subcategory of $\Rep_k^{\rm{pfd}} \CX$ consisting of finitely presented representations by $\Rep_k^{\rm{fd}}\CX$.

Of particular importance is the case $\CX=\mathbb{R}_{\geq 0}$, the poset of non-negative real numbers and to concentrate on the abelian subcategory $\RepR$ of finitely presented representations.
It is proved in \cite[Theorem 1.1]{CB} that the indecomposable objects in this category are classified by the constant representation $k_{ [x, y)}$, where $x< y\leq \infty$.
Moreover, for each $x\geq 0$, the constant representations $k_{[ x, \infty)}$ and $k_{[ 0, x)}$ are projective and injective objects, respectively.

\section{$\tau$-tilting subcategories}\label{Sec:TauTiltSubcategories}
In this section, we recall the definition of $\tau$-tilting subcategories and study some of their properties. Following definition is motivated by \cite[Definition 1.5]{IJY} and \cite[Definition 2.1]{LZh}.

\begin{definition}\label{DefTauTilting}
Let $\SA$ be an abelian category with enough projective objects.
Let $\ST$ be an additive full subcategory of $\SA$.
Then $\ST$ is called a  weak support $\tau$-tilting subcategory of $\SA$ if
\begin{itemize}
  \item[$(i)$] $\Ext^1_{\SA}(T_1,\Fac(T_2))=0,$ for all $T_1, T_2 \in \ST$.
  \item[$(ii)$] For any projective $P$ in $\SA$, there exists an exact sequence
  \[P \st{f}{\lrt} T^0 \lrt T^1 \lrt 0\]
  such that $T^0$ and $T^1$ are in $\ST$ and $f$ is a left $\ST$-approximation of $P$.
\end{itemize}
If furthermore $\ST$ is a contravariantly finite subcategory of $\SA$, it is called a support $\tau$-tilting subcategory of $\SA$.
A support $\tau$-tilting subcategory is simply called $\tau$-tilting if the approximation $f: P \lrt T^0$ is non-zero for every projective object $P$.
\end{definition}

\begin{remark}
Note that a key difference between tilting and $\tau$-tilting subcategories is that the approximations $f: P \lrt T^0$ are always a monomorphism when $\ST$ is a tilting subcategory.
It is immediate from the definition that every (weak)  tilting subcategory is a  (weak) $\tau$-tilting subcategory.
\end{remark}

In the following example we present a support $\tau$-tilting subcategory which is not a tilting subcategory. Note that for $X\in\SA$, we let $\add(X)$ be the category of all direct summands of finite direct sums of copies of $X$.

\begin{example}\label{TauTiltingCatExam1}
Let $\SA=\RepR$, and set
\[\ST=\add( \lbrace k_{[x, \infty)} \vert \ x\geq 1\rbrace \cup \lbrace k_{[0, x)}\vert  \ x\leq 1\rbrace).\]
Then $\ST$ is a support $\tau$-tilting subcategory. To see this, first note that,
\[\Fac(\ST)=\add( \lbrace k_{[x, y)} \vert \ 1\leq x< y\leq\infty\rbrace \cup \lbrace k_{[0, x)}\vert  \ x\leq 1\rbrace),\]
and  obviously $\Ext^1_\SA(\ST, \Fac(\ST))=0$.

Now let $k_{[a, \infty)}$ be an indecomposable projective in $\SA$.
If $a\geq 1$, the exact sequence
\[0\lrt k_{[a, \infty)}\lrt k_{[a, \infty)}\lrt 0,\]
and if $0\leq a< 1$, the exact sequence
\[k_{[a, \infty)}\lrt k_{[0, 1)} \lrt k_{[1, \infty)}\lrt 0,\]
 are the desired exact sequences.

Moreover, $\ST$ is a contravariantly finite subcategory of $\SA$. Indeed, depending on $a$ and $b$, for an indecomposable representation $k_{[ a, b)}$ we have the following right $\ST$-approximations
 \[
 \left\{
 \begin{array}{ll}
 \  k_{[0, b)}\lrt k_{[0, b)},  & \  0= a<  b \leq 1;\\
k_{[1, \infty)}\lrt k_{[a, b)},  & \  0\leq  a< 1 < b \leq \infty;\\
\ \ \ \ \ \ 0\lrt k_{[a, b)},  & \  0<  a<  b \leq 1;\\
k_{[a, \infty)}\lrt k_{[a, b)}, & \   1\leq a < b\leq \infty.
 \end{array}
 \right. \]
It is obviously not a tilting subcategory of $\SA$, because $k_{[0, \infty)}$ is a projective object, for which we do not have a short exact sequence like Condition $(iv)$ of Definition \cite{BBOS}.
\end{example}

One of the main reasons behind the success of $\tau$-tilting theory \cite{AIR} is the formal inclusion of the notion of \textit{support} from the start, see Definition~\ref{DefAIR}.
Note that in this definition we are defining objects in $\mmod \Lambda$ by using a property of the object in a different category, namely $\mmod (\Lambda/\langle e\rangle)$.
So one needs to verify that the good properties of a support $\tau$-tilting object $M$ in $\mmod (\Lambda/\langle e\rangle)$ can be transported to $\mmod \Lambda$.
This was done in \cite[Lemma~2.1]{AIR}.
Note that $\mmod (\Lambda/\langle e\rangle)$ is a functorially finite wide subcategory of $\mmod \La$ which is at the same time a torsion and a torsion-free class. Recall that a subcategory $\SX$ of an abelian category $\SA$ is called a wide subcategory if it is closed under kernels, cokernels and extensions. In particular, this implies that $\SX$ itself is an abelian category.
It is known that a subcategory $\SX$ of an abelian category $\SA$ is a torsion class if it is a contravariantly finite subcategory of $\SA$, closed with respect to quotients and extensions. Moreover, $\SX$ is a torsion free class if it is a covariantly finite subcategory of $\SA$ which is furthermore closed under subobjects and extensions.

\begin{lemma}\label{TorsionTorsionfree}
Let $\SA$ be an abelian category with enough projective objects. Let $\SX$ be a wide and functorially finite  full subcategory of $\SA$. Then $\SX$ is a torsion class  of $\SA$ if and only if $\SX$ is a torsion free class of $\SA$.
\end{lemma}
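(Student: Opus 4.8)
The plan is to exploit the symmetry of the hypotheses: both \emph{wide} and \emph{functorially finite} are self-dual conditions, whereas \emph{torsion class} and \emph{torsion-free class} are dual to one another. Consequently it suffices to establish one implication with care and then invoke the dual argument for the other. I would prove first that a wide functorially finite torsion class $\SX$ is a torsion-free class. Recall that to be a torsion-free class, $\SX$ must be covariantly finite, closed under extensions, and closed under subobjects. The first of these is immediate from functorial finiteness and the second from wideness, so the whole content of the implication is reduced to proving that $\SX$ is closed under subobjects.

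For that closure I would argue as follows. Let $Y \hookrightarrow X$ be a monomorphism in $\SA$ with $X \in \SX$, and consider the canonical epimorphism $\pi \colon X \lrt X/Y$. Since $\SX$ is a torsion class it is closed under quotients, so $X/Y \in \SX$. The crucial observation is that $\pi$ is now a morphism \emph{between two objects of} $\SX$, and its kernel computed in $\SA$ is precisely $Y$. Because $\SX$ is wide it is closed under kernels of its own morphisms, and therefore $Y = \Ker \pi \in \SX$. This yields closure under subobjects and finishes the implication.

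The converse is the exact dual. If $\SX$ is a wide functorially finite torsion-free class and $X \twoheadrightarrow Z$ is an epimorphism with $X \in \SX$, then closure under subobjects (the torsion-free hypothesis) places the kernel $Y \hookrightarrow X$ inside $\SX$, after which $Z = \Coker(Y \to X)$ lies in $\SX$ by closure under cokernels (wideness). This gives closure under quotients, and together with contravariant finiteness and extension-closure it makes $\SX$ a torsion class. I note that this dual argument is elementary enough that it does not require passing to $\SA^{\op}$, so the enough-projectives hypothesis plays no role in the closure steps and only enters through the ambient definitions.

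The step I expect to require the most care is the order in which wideness is invoked. One cannot apply wideness directly to the inclusion $Y \hookrightarrow X$ to conclude $Y \in \SX$, since membership of $Y$ is exactly what is at stake and wideness controls only kernels and cokernels of morphisms whose \emph{source and target already belong to} $\SX$. It is the torsion (resp. torsion-free) closure that first promotes the quotient $X/Y$ (resp. the subobject $Y$) into $\SX$, and only afterwards does $\pi$ live entirely inside $\SX$ so that wideness applies. I would also record the standard fact that, since the inclusion of a wide subcategory is exact, the kernel of $\pi$ formed in $\SX$ coincides with the kernel $Y$ formed in $\SA$, so there is no ambiguity in identifying the two.
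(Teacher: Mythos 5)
Your proof is correct and follows the same route as the paper's: verify covariant finiteness from functorial finiteness, extension-closure from the torsion-class hypothesis, and subobject-closure from wideness, then dualize for the converse. Your treatment of subobject-closure is in fact more careful than the paper's, which justifies it merely ``since it is a wide subcategory'' without recording the step you rightly flag as essential --- that one must first use quotient-closure to place $X/Y$ in $\SX$ so that $\pi\colon X \lrt X/Y$ becomes a morphism inside $\SX$, and only then does wideness yield $Y = \Ker\pi \in \SX$.
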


\begin{proof}
Let $\SX$ be a torsion class of $\SA$.  In order to show that it is a torsion free class, we need to show that it is covariantly finite,  closed under subobject and  closed  under extension. By the assumption $\SX$ is  a covariantly finite subcategory. It is closed under extension, since $\SX$ is a torsion class. Moreover, it is closed under subobject, since it is a wide subcategory.
The other implication follows similarly.
\end{proof}

Our next result is inspired by \cite[Lemma~2.1]{AIR} and it justifies the name of support $\tau$-tilting subcategories.

\begin{proposition}\label{ProducingTauTiltingFromTiltingRep}
Let $\SA$ be an abelian category with enough projective objects. Let $\SX$ be a wide and functorially finite  torsion class of $\SA$. Then every support $\tau$-tilting subcategory of $\SX$ is a support $\tau$-tilting subcategory of $\SA$.
\end{proposition}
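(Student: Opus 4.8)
The plan is to verify the three conditions of Definition~\ref{DefTauTilting} for $\ST$ as a subcategory of $\SA$, transporting each from the corresponding property that $\ST$ enjoys inside $\SX$. Two structural features of $\SX$ drive everything. First, since $\SX$ is wide it is closed under extensions, so $\Ext^1_\SX(X,Y)\cong\Ext^1_\SA(X,Y)$ for all $X,Y\in\SX$; and since $\SX$ is a torsion class it is closed under quotients, so for every $T\in\SX$ the factors $\Fac(T)$ computed in $\SX$ coincide with those computed in $\SA$. Second, and crucially, by Lemma~\ref{TorsionTorsionfree} the subcategory $\SX$ is also a torsion-free class, so $({}^{\perp_0}\SX,\SX)$ is a torsion pair. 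Hence every $A\in\SA$ comes with a canonical \emph{epimorphism} $A\twoheadrightarrow sA$ onto its $\SX$-part $sA\in\SX$, and this epimorphism is a left $\SX$-approximation of $A$ (any map from $A$ into $\SX$ kills the torsion part ${}^{\perp_0}\SX$). I would record these facts at the outset.

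With this in hand, condition $(i)$ is immediate: for $T_1,T_2\in\ST$ the objects of $\Fac(T_2)$ already lie in $\SX$ and agree with the factors taken in $\SX$, so the hypothesis $\Ext^1_\SX(T_1,\Fac(T_2))=0$ transfers to $\Ext^1_\SA$ via the identification above. Contravariant finiteness of $\ST$ in $\SA$ I would get by composing approximations: for $A\in\SA$ take a right $\SX$-approximation $X_A\to A$ (which exists as $\SX$ is a torsion class, hence contravariantly finite) followed by a right $\ST$-approximation $T_A\to X_A$ inside $\SX$; since every object of $\ST$ lies in $\SX$, any $T\to A$ factors first through $X_A$ and then through $T_A$, so the composite $T_A\to A$ is a right $\ST$-approximation in $\SA$.

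The main work, and the step I expect to be the real obstacle, is condition $(ii)$ for an arbitrary projective $P$ of $\SA$, since the definition of support $\tau$-tilting subcategory of $\SX$ only supplies approximation sequences for the projectives of $\SX$. The key observation is that the left $\SX$-approximation $u\colon P\twoheadrightarrow sP=:X^P$ is an epimorphism whose target is projective in $\SX$: given an epimorphism $\pi\colon X\to Y$ in $\SX$ and $g\colon X^P\to Y$, projectivity of $P$ in $\SA$ lifts $gu$ to $v\colon P\to X$ with $\pi v=gu$, this $v$ factors as $v=wu$ through the $\SX$-approximation $u$, and cancelling the epimorphism $u$ in $\pi w u=gu$ gives $\pi w=g$. (En route this shows $\SX$ has enough projective objects, so that "support $\tau$-tilting subcategory of $\SX$" is meaningful.) Now I apply condition $(ii)$ for $\ST$ inside $\SX$ to the projective $X^P$, obtaining an exact sequence $X^P\st{f'}{\lrt}T^0\lrt T^1\lrt 0$ in $\SX$ with $T^0,T^1\in\ST$ and $f'$ a left $\ST$-approximation of $X^P$ in $\SX$. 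I then claim that $P\st{f'u}{\lrt}T^0\lrt T^1\lrt 0$ is the sequence needed in $\SA$: exactness survives because $u$ is epic, so $\im(f'u)=\im f'=\Ker(T^0\to T^1)$, and $f'u$ is a left $\ST$-approximation of $P$ since any $\phi\colon P\to T$ with $T\in\ST\subseteq\SX$ factors through $u$ (as $u$ is a left $\SX$-approximation) and then through $f'$ (as $f'$ is a left $\ST$-approximation in $\SX$). Together with the first two conditions this would establish that $\ST$ is a support $\tau$-tilting subcategory of $\SA$.
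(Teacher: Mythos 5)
Your proof is correct and takes essentially the same route as the paper's: both invoke Lemma~\ref{TorsionTorsionfree} to obtain the torsion pair $({}^{\perp_0}\SX,\SX)$, use the resulting canonical epimorphism from a projective $P$ of $\SA$ onto its $\SX$-part (which is projective in $\SX$), splice this with the approximation sequence that the support $\tau$-tilting hypothesis provides in $\SX$, and establish contravariant finiteness in $\SA$ by composing a right $\SX$-approximation with a right $\ST$-approximation. The only cosmetic difference is that you verify projectivity of the $\SX$-part of $P$ by a direct lifting argument (noting along the way that $\SX$ has enough projectives, a point the paper leaves implicit), whereas the paper deduces the same fact by showing $\Ext^1_\SA(X,X')=0$ for all $X'\in\SX$ from the long exact sequence attached to $0\lrt Y\lrt P\lrt X\lrt 0$.
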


\begin{proof}
Let $\ST$ be a support $\tau$-tilting subcategory of $\SX$. First we note that, since $\SX$ is a wide subcategory, then $\Fac(\ST)\subseteq \Fac(\SX)=\SX$. Therefore, since $\ST$ is a support $\tau$-tilting subcategory of $\SX$, we have $\Ext^1_\SX(\ST, \Fac(\ST))=0$. Hence $\Ext^1_\SA(\ST, \Fac(\ST))=0$.

Now let $P$ be a projective object in $\SA$. By Lemma \ref{TorsionTorsionfree}, there is a subcategory $\SY$ of $\SA$ such that $(\SY, \SX)$ is a torsion pair of $\SA$. Consider the canonical short exact sequence
\[0\lrt Y\lrt P\st{f}\lrt X\lrt 0\]
of $P$ with respect to this torsion pair. So $Y \in \SY$ and $X \in \SX$. Let $X'$ be an arbitrary object of $\SX$. By applying the functor $\Hom_\SA(-, X')$ to the above short exact sequence, we get $\Ext^1_\SA(X, X')=0$ and therefore $\Ext^1_\SX(X, X')=0$. Hence $X$ is an Ext-projective object in $\SX$. Since $\ST$ is a  support $\tau$-tilting subcategory of $\SX$, there exists  an  exact sequence
\[ X\st{g}\lrt T^0\lrt T^1\lrt 0\]
where $T^0, T^1\in\ST$ and $g$ is a left $\ST$-approximation. Now  the exact sequence
\[P\st{gf}\lrt T^0\lrt T^1\lrt 0\]
is the desired one. In fact, it is easy to see  that  $gf$ is a left $\ST$-approximation of $P$.

Finally we show that $\ST$ is a contravariantly finite subcategory of $\SA$. Let $A\in \SA$. Since $\SX$ is a functorially finite subcategory of $\SA$, there exists a right $\SX$-approximation $X\st{f}\lrt A$ of $A$. Consider a right $\ST$-approximation $T\st{g}\lrt X$ of $X$, which exists because $\ST$ is a support $\tau$-tilting subcategory of $\SA$. Now it is easy to see that $T\st{fg}\lrt A$ is a right $\ST$-approximation of $A$.
\end{proof}

We illustrate our previous result with the following example.
\begin{example}\label{TauTiltingCatExam2}
Let $\SA=\RepR$ and $\SB=\RepRO$.
On one hand, since $\mathbb{R}_{\geq 0}$ and $\mathbb{R}_{\geq 1}$ are isomorphic as posets, it is clear that $\SA$ and $\SB$ are equivalent as categories.
On the other hand, the natural inclusion of $\mathbb{R}_{\geq 1}$ into $\mathbb{R}_{\geq 0}$ induces an embedding of $\SB$ into $\SA$.
In fact, it is easy to see that $\SB$ is a functorially finite wide subcategory of $\SA$ which is both a torsion and a torsion free class.
Set
\[\ST=\add(\lbrace k_{[ 1, y)} \vert \  1<y\leq \infty\rbrace).\]
It is easy to see that $\ST$ is a tilting subcategory of $\SB$  and hence a support $\tau$-tilting subcategory of $\SB$.
So, Proposition~\ref{ProducingTauTiltingFromTiltingRep} implies that $\ST$ is a support $\tau$-tilting subcategory of $\SA$.
Note that $\ST$ is not a tilting subcategory of $\SA$ since the $\ST$-approximation $f: k_{[0,\infty)} \to  k_{[1,\infty)}$ is not a monomorphism, in fact it is  a zero morphism.
\end{example}

By \cite[Proposition 3.42]{R}, if $\SA$ is an abelian category with enough projective objects  such that $\Prj(\SA)=\add(P)$, then for every tilting subcategory $\ST$ of $\SA$ there exists an object $T \in \ST$ such that $\ST = \add(T)$. In fact $T$ is a tilting object. Note that an object $T\in\SA$ is called a tilting object if
\begin{itemize}
\item[$(i)$] $\Ext^1_\SA(T, T)=0$.
\item[$(ii)$] Projective dimension of $T$ is at most one.
\item[$(iii)$ ] For every projective object $P\in \SA$, there exists a short exact sequence
\[0\lrt P\lrt T^0\lrt T^1\lrt 0\]
such that $T^0, T^1\in\add(T)$.
\end{itemize}

We prove a version of this fact for $\tau$-tilting subcategories. To do this we provide the following definition of a $\tau$-tilting object in an abelian category.

\begin{definition}\label{TauTiltingObjectDef}
Let $\SA$ be an abelian category with enough projective objects. An object $T\in \SA$ is called a  support $\tau$-tilting object if
\begin{itemize}
\item[$(i)$] $\Ext^1_\SA(T, \Fac(T))=0$.
\item[$(ii)$] For every projective object $P\in\SA$, there exists an exact sequence
\[P\st{f}\lrt T^0\lrt T^1\lrt 0\]
such that $T^0, T^1\in \add(T)$ and $f$ is a left $\add(T)$-approximation.
\end{itemize}
\end{definition}

Let $\SA$ be an abelian category with enough projective objects. Let $T$ be a support $\tau$-tilting object in $\SA$.
It follows directly from the definition that $\add(T)$ is a weak support $\tau$-tilting subcategory of $\SA$.
Next proposition provides a partial converse to this fact.

\begin{remark}\label{Jasso2.14}
We note that the previous definition is not the classical definition of support $\tau$-tilting modules in $\mmod\La$ introduced in \cite{AIR}.
However it follows from \cite[Proposition~2.14]{J} that these two definitions coincide if $\SA=\mmod\La$.
\end{remark}

\begin{proposition}\label{addgentautilt}
Let $\SA$ be an abelian category with enough projective objects such that $\Prj(\SA)=\add(P)$, for some object $P\in\SA$.
If $\ST$ is a support $\tau$-tilting subcategory of $\SA$, then there exists a support $\tau$-tilting object $T\in \ST$ such that $\Fac(\ST)=\Fac(T)$.
\end{proposition}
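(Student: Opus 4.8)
The plan is to extract $T$ from the single projective generator and then read off both conditions of Definition~\ref{TauTiltingObjectDef}. Since $\Prj(\SA)=\add(P)$, I would first apply Condition~$(ii)$ of the support $\tau$-tilting subcategory $\ST$ to $P$ itself, obtaining an exact sequence $P\st{f}{\lrt}T^0\lrt T^1\lrt 0$ with $T^0,T^1\in\ST$ and $f$ a left $\ST$-approximation, and set $T:=T^0\oplus T^1$. As $\ST$ is additive, $T\in\ST$, so Condition~$(i)$ of a support $\tau$-tilting object is immediate: $\Ext^1_\SA(T,\Fac(T))\subseteq\Ext^1_\SA(\ST,\Fac(\ST))=0$ because $\Fac(T)\subseteq\Fac(\ST)$.

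Next I would establish $\Fac(\ST)=\Fac(T)$. The inclusion $\Fac(T)\subseteq\Fac(\ST)$ is clear from $T\in\ST$, so it suffices to show $\ST\subseteq\Fac(T)$; as $\Fac(T)$ is closed under taking factors, this yields $\Fac(\ST)\subseteq\Fac(\Fac(T))=\Fac(T)$. Given $S\in\ST$, enough projectivity together with $\Prj(\SA)=\add(P)$ provides an epimorphism $e\colon P^n\twoheadrightarrow S$. The map $f^{\oplus n}\colon P^n\lrt (T^0)^n$ is again a left $\ST$-approximation, so $e$ factors as $e=h\circ f^{\oplus n}$ for some $h\colon (T^0)^n\lrt S$; since $e$ is epic, so is $h$, whence $S\in\Fac(T^0)\subseteq\Fac(T)$.

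It then remains to verify Condition~$(ii)$ of Definition~\ref{TauTiltingObjectDef} for an arbitrary projective $P'$. For $P$ the original sequence already works, since a left $\ST$-approximation is a fortiori a left $\add(T)$-approximation. For general $P'$, I would write $P'$ as a summand of $P^n$, say with section $s\colon P'\lrt P^n$, and set $g:=f^{\oplus n}\circ s\colon P'\lrt (T^0)^n$; a direct diagram chase shows $g$ is a left $\ST$-approximation, hence a left $\add(T)$-approximation, of $P'$. Writing $C:=\Coker(g)$, I would first check that $C$ is Ext-projective in the torsion class $\Fac(T)=\Fac(\ST)$: applying $\Hom_\SA(-,X)$ with $X\in\Fac(T)$ to $0\lrt\im(g)\lrt (T^0)^n\lrt C\lrt 0$ reduces the vanishing of $\Ext^1_\SA(C,X)$ to extending each map $\im(g)\lrt X$ over $(T^0)^n$, because $\Ext^1_\SA((T^0)^n,X)=0$ by Condition~$(i)$. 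Such a map lifts along an epimorphism $S\twoheadrightarrow X$ with $S\in\ST$ (using projectivity of $P'$) and then factors through $g$ by the approximation property, so $\Ext^1_\SA(C,X)=0$. Thus $C\in\Fac(T)$ is Ext-projective.

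The main obstacle is the final step, namely concluding $C\in\add(T)$, which is exactly the assertion that Condition~$(ii)$ is inherited by direct summands of $P^n$. I expect to handle this by passing to minimal left $\add(T)$-approximations: the minimal left $\add(T)$-approximation of $P^n$ is the $n$-fold direct sum of that of $P$, with cokernel $(T^1)^n\in\add(T)$, and minimality makes this assignment compatible with the idempotent $P^n\twoheadrightarrow P'\hookrightarrow P^n$, realizing $C$ as a direct summand of $(T^1)^n$; since $\add(T)$ is closed under summands, $C\in\add(T)$. The delicate point, and the one I would spell out most carefully, is the existence and direct-sum compatibility of these minimal approximations in the present generality (equivalently, that the Ext-projective objects of the functorially finite torsion class $\Fac(T)$ are precisely $\add(T)$); once this is secured, the sequence $P'\st{g}{\lrt}(T^0)^n\lrt C\lrt 0$ witnesses Condition~$(ii)$ and the proof is complete.
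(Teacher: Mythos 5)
Most of your proposal is sound, and in one place it is genuinely slicker than the paper: to prove $\ST\subseteq\Fac(T^0\oplus T^1)$ the paper compares two projective presentations through a two-step commutative diagram and finishes with an explicit matrix epimorphism $T^0_0\oplus T^1_1\lrt T^1_0\oplus \bar{T}$, whereas your three-line factorization of an epimorphism $P^n\twoheadrightarrow S$ through the left $\ST$-approximation $f^{\oplus n}$ gives the same inclusion at once (it is the same device as the paper's own Lemma~\ref{FacIntersection}). Your choice $T=T^0\oplus T^1$, the verification of Condition~$(i)$, the reduction of Condition~$(ii)$ to summands of $P^n$, and the claim that $g=f^{\oplus n}\circ s$ is a left $\ST$-approximation of $P'$ all match the paper and are correct. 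Your Ext-projectivity computation for $C=\Coker(g)$ is also correct; in fact, combined with the paper's Lemma~\ref{IJY} it yields directly that $C\in{}^{\perp_1}\Fac(\ST)\cap\Fac(\ST)=\ST$. But this is strictly weaker than what Condition~$(ii)$ of Definition~\ref{TauTiltingObjectDef} requires, namely $C\in\add(T)$, and $\ST$ may well be larger than $\add(T^0\oplus T^1)$.

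The genuine gap is exactly the step you flagged, and it cannot be closed by your proposed tool in this generality. Minimal left $\add(T)$-approximations need not exist in an arbitrary abelian category with enough projective objects: their existence, and the fact that a finite direct sum of left minimal maps is again left minimal (which your compatibility-with-the-idempotent argument also needs), are Krull--Schmidt-type phenomena requiring hypotheses such as Hom-finiteness or semiperfect endomorphism rings, none of which is assumed for $\SA$. Your proposed equivalent statement --- that the Ext-projective objects of $\Fac(T)$ are precisely $\add(T)$ --- is an AIR-type theorem whose known proof in $\mmod\La$ uses the Auslander--Reiten translate and AR-duality; the paper neither proves nor uses it, and already in $\Mod R$ the Ext-projectives of $\Gen(T)$ for a quasitilting $T$ are governed by $\Add(T)$ rather than $\add(T)$ (cf.\ Proposition~\ref{AMV}). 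The paper closes the step by a different, minimality-free mechanism that your argument is missing: keep the complement in play. Writing $P'\oplus Q=P^n$ and $f^{\oplus n}=(g\ h)$, it compares the cokernel of $g\oplus h\colon P'\oplus Q\lrt (T^0)^n\oplus(T^0)^n$, which is $\Coker g\oplus\Coker h$, with the cokernel $(T^1)^n$ of $(g\ h)$, via the fold map $(T^0)^n\oplus(T^0)^n\lrt(T^0)^n$; the resulting diagram yields a short exact sequence
\[0\lrt (T^0)^n\lrt \Coker g\oplus\Coker h\lrt (T^1)^n\lrt 0,\]
which splits because $\Ext^1_{\SA}((T^1)^n,\Fac(\ST))=0$, so that $\Coker g\oplus\Coker h\in\add(T^0\oplus T^1)$ and in particular $C=\Coker g\in\add(T)$. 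Replacing your minimal-approximation step by this splitting trick repairs the proof, and then your Ext-projectivity computation becomes superfluous.
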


\begin{proof}
Consider the exact sequence \[P\lrt T^0\lrt T^1\lrt 0\] which  exists, because $\ST$ is a support $\tau$-tilting subcategory.
We claim that $\Fac(T^0\oplus T^1)=\Fac(\ST)$.
First we prove that $T^0\oplus T^1$ is a  support $\tau$-tilting object.
To this end, we just need to show that  every projective object $P'\in \Prj (\SA)=\add(P)$ admits an exact sequence \[P'\st{f'}\lrt T^0_{P'}\lrt T^1_{P'}\lrt 0\]
where $f'$ is a  left $\add(T^0\oplus T^1)$-approximation and $T^0_{P'} , T^1_{P'}\in\add(T^0\oplus T^1)$.
Since $\Prj(\SA)=\add(P)$, there exists a non-negative integer $n$ and projective object $Q\in\add(P)$ such that $P'\oplus Q=P^n$.
So we get the exact sequence \[P'\oplus Q \st{(g \ h)}\lrt (T^0)^n\lrt (T^1)^n\lrt 0.\]
Based on this sequence, we can construct the following commutative diagram
\[\xymatrix{
  & 0 \ar[d]&0\ar[d] &\\
  & (T^0)^n\ar@{=}[r]\ar[d]& (T^0)^n\ar[d]\\
 {P'\oplus Q}\ar@{=}[d]\ar[r]^-{g\oplus h} & (T^0)^n\oplus (T^0)^n \ar[d] \ar[r] & \Coker g\oplus\Coker h \ar[d]\ar[r] & 0 \\
  {P'\oplus Q}\ar[r]^{(g \ h)} & (T^0)^n  \ar[r]\ar[d]&(T^1)^n \ar[r]\ar[d] & 0 \\
 & 0&0
}\]
Since the second vertical short exact sequence splits, both $\Coker g $ and $\Coker h $ are in $\add(T_0\oplus T_1)$. Therefore the  exact sequence
\[ P'\lrt (T^0)^n\lrt \Coker g \lrt 0\]
is the desired one.

Now we show that $\Fac(\ST) = \Fac(T^0\oplus T^1)$.
It is clear that $\Fac(T^0\oplus T^1)\subseteq \Fac(\ST)$.
So it is enough to show the reverse inclusion.
To do this, we show that every object $\bar{T}\in\ST$ lies in $\Fac(T^0\oplus T^1)$.
Let \[P_1\st{h_1}\lrt P_0\st{h_0}\lrt \bar{T}\lrt 0\] be a projective presentation of $\bar{T}$.
Since $T^0\oplus T^1$ is a  support $\tau$-tilting object, for $i=0, 1$, there are exact sequences
\[P_i\st{f_i}\lrt T^0_i\st{g_i}\lrt T^1_i\lrt 0\]
with $T^0_i , T^1_i\in\add(T_0\oplus T_1)$.
Therefore, since $f_i$ is a left $\add(T^0\oplus T^1)$-approximation of $P_i$, we have the commutative diagram
\[\xymatrix{
  & P_1\ar[r]^{h_1}\ar[d]^{f_1}& P_0\ar[d]^{f_0}\ar[r]^{h_0}& \bar{T}\ar@{=}[d]\ar[r]&0\\
& T^0_1\ar[r]^{k_1}\ar[d]^{g_1}& T^0_0\ar[r]^{k_0}\ar[d]^{g_0}& \bar{T}\\
&T^1_1\ar[d]& T^1_0\ar[d]&\\
&0&0
}\]
where the first row and the first column are exact.
Since $g_0k_1f_1=0$ and $k_0k_1f_1=0$, by the cokernel property we have morphisms  $l_1: T^1_1\lrt T^1_0$ and $l_0: T^1_1\lrt \bar{T}$ such that $g_0k_1=l_1g_1$ and $l_0g_1=k_0k_1$.
To finish the proof we show that the sequence
\[\begin{tikzcd}
 T^0_0\oplus T^1_1\ar{r}{\begin{bmatrix}  g_0\  l_0\\ k_0 \  l_1 \end{bmatrix}}& T^1_0\oplus \bar{T}\ar{r} &0
\end{tikzcd}\]
is exact.
Indeed, let $x\oplus y\in T^1_0\oplus \bar{T}$.
Since $g_0$ is an epimorphism, there exists $t'_0\in T^0_0$ such that $g_0(t'_0)=x$.
Now since $h_0$ is an epimorphism, there exists $p_0\in P_0$ such that $h_0(p_0)=k_0(t'_0)-y$.  Consider  $ (t'_0-f_0(p_0))\oplus 0\in T^0_0\oplus T^1_1$.
Therefore, it is easy to see that
\[{\begin{bmatrix}  g_0\  l_0\\ k_0 \  l_1 \end{bmatrix}} \begin{bmatrix} t'_0-f_0(p_0) \\ 0 \end{bmatrix} =\begin{bmatrix}
x\\y
\end{bmatrix}.\]
Hence $\bar{T}\in \Fac(T^0_0\oplus T^1_1)\subseteq \Fac(T_0\oplus T_1)$.
\end{proof}

When $\ST$ is a weak tilting subcategory of $\SA$, then $\Fac(\ST) = \ST^{\perp_1}$, see \cite[Proposition 2.22]{BBOS}. The following example shows that this equality does not hold in general for $\tau$-tilting subcategories.

\begin{example}
Let $A$  be the path algebra of the quiver
$$\xymatrix{
  & 2\ar[dr]& \\
  1\ar[ru] & & 3\ar[ll] }$$
modulo the ideal generated by all paths of length 2. The Auslander-Reiten quiver of $A$ is
\begin{figure}[h]
    \centering
			\begin{tikzpicture}[line cap=round,line join=round ,x=1.4cm,y=1.2cm]
					\clip(-1.4,-0.3) rectangle (5.3,2.5);
					\draw [->] (-0.8,0.2) -- (-0.2,0.8);
					\draw [->] (3.2,0.2) -- (3.8,0.8);
					\draw [->] (0.2,1.2) -- (0.8,1.8);
					\draw [->] (4.2,1.2) -- (4.8,1.8);
					\draw [<-, dashed] (4.2,1.0) -- (5,1.0);
					\draw [dashed] (-1.,1.0) -- (-0.2,1.0);
					\draw [<-, dashed] (0.2,1.0) -- (1.8,1.0);
					\draw [<-, dashed] (2.2,1.0) -- (3.8,1.0);
					\draw [->] (2.2,0.8) -- (2.8,0.2);
					\draw [->] (1.2,1.8) -- (1.8,1.2);

				\begin{scriptsize}
					\draw (-1,0) node {$\Dim{3\\1}$};
					\draw (5,2) node {$\Dim{3\\1}$};
					\draw (3,0) node {$\Dim{1\\2}$};
					\draw (4,1) node {$\Dim{1}$};
					\draw (0,1) node {$\Dim{3}$};
					\draw (2,1) node {$\Dim{2}$};
					\draw (1,2) node {$\Dim{2\\3}$};

				\end{scriptsize}
			\end{tikzpicture}
	\end{figure}
	
Let $\ST=\add(\Dim{2\\3} \oplus \Dim{2} \oplus \Dim{1\\2})$. Then $\ST$ is a $\tau$-tilting subcategory of $\mmod A$.  It is easy to see that
\[\Fac(\ST)= \add(\Dim{2\\3} \oplus \Dim{2} \oplus \Dim{1\\2} \oplus \Dim{1})\neq \add(\Dim{2\\3} \oplus \Dim{2} \oplus \Dim{1\\2} \oplus\Dim{1}\oplus \Dim{3\\1})=\ST^{\perp_1}.\]
\end{example}

\section{$\tau$-cotorsion torsion triples}\label{Sec:TauCotorsionTorsionTriples}
In this section we introduce a triple of full additive subcategories of $\SA$, that will be called a $\tau$-cotorsion torsion triple, or simply a $\tau$-triple, and show that there is a bijection between the collection of all $\tau$-tilting subcategories of $\SA$ and the collection of all $\tau$-triples.
The Proposition~\ref{motivation} motivates the following definition.

\begin{definition}\label{WeakCotorsion}
Let $\SA$ be an abelian category with enough projective objects.
A pair of full subcategories $(\SC, \SD)$ of $\SA$ is called a $\tau$-cotorsion pair if
\begin{itemize}
\item[$1.$] $\SC = {}^{\perp_1}\SD$.
\item[$2.$] For every projective object $P \in \SA$, there exists an exact sequence
\[  P \st{f}\lrt {D}\lrt C \lrt 0,\]
 where $D \in \SC\cap\SD$, $C \in \SC$ and $f$ is a left $\SD$-approximation.
\item[$3.$] $\SC\cap\SD$ is a contravariantly finite subcategory of $\SA$.
\end{itemize}
\end{definition}

\begin{example}\label{TauCotorsionExample}
Let $\SA=\RepR$ and set
\begin{align*}
\SC=&\add(\lbrace k_{[x, \infty)} \vert \ 0\leq x < \infty\rbrace \cup \lbrace k_{[x, y)} \vert \ 0 \leq x < y< 1\rbrace)\\
\SD=&\add(\lbrace k_{[x, y)} \vert \ 1\leq x< y \leq \infty \rbrace).
\end{align*}
Then $(\SC, \SD)$ is a $\tau$-cotorsion pair.
Note that it is not a cotorsion pair, because $\SD$ does not contain the injective representations.
To see it is a $\tau$-cotorsion pair, first note that
 \[\SC\cap \SD=\add(\lbrace k_{[x, \infty)} \vert x\geq 1\rbrace)\]  is a contravariantly finite subcategory of $\SA$.
 Indeed,  every  indecomposable representation $k_{[ a, b)}$ admits a right $\SC\cap\SD$-approximation as follows
 \[
 \left\{
 \begin{array}{ll}
 k_{[a, \infty)}\lrt k_{[a, b)}, & \   1\leq a < b\leq \infty;\\
 k_{[1, \infty)}\lrt k_{[a, b)},  & \  0< a< 1 < b \leq \infty;\\
\ \ \ \ \ \ \ 0\lrt k_{[a, b)},  & \  0\leq a<  b \leq 1.
 \end{array}
 \right.
 \]
Now let $k_{[a, \infty)}$ be an indecomposable projective in $\SA$.
If $a\geq 1$, the exact sequence
 \[0\lrt k_{[a, \infty)}\lrt k_{[a, \infty)}\lrt 0,\]
 and if $a< 1$, the exact sequence
 \[ k_{[a, \infty)}\st{0}\lrt k_{[b, \infty)} \lrt k_{[b, \infty)}\lrt 0\]
with $b\geq 1$, are the desired exace sequences.
Finally, it is straightforward to see that $\SC={}^{\perp_1}\SD$.
\end{example}

It is shown in \cite[Lemma 2.12]{BBOS} that if $\SC$ and $\SD$ are two subcategories of an abelian category $\SA$ such that
\begin{itemize}
\item[$(1)$] $\Ext^1_\SA(\SC, \SD)=0$;
\item[$(2)$] $\SD$ is closed under factor objects;
\item[$(3)$] For every object $A\in\SA$, there exists a short exact sequence  $0 \lrt A \st{\varphi}\lrt D \lrt C \lrt 0$ where $D \in \SD$ and $C \in \SC$,
\end{itemize}
then every object in $\SC$ is of projective dimension at most one. In particular, a cotorsion pair $(\SC, \ST)$ satisfies all the above conditions when it is embedded in a cotorsion torsion triple $(\SC, \ST, \SF)$. So every object in $\SC$ has projective dimension at most one.

The following example shows that the injectivity of the morphism $\varphi$ in Condition $(3)$ is essential.
We note that this is implicit in the proof of \cite[Lemma 2.12]{BBOS}.
Also note that it follows automatically from the previous definition that the map $\varphi$ is a left $\SD$-approximation.

\begin{example}
Let $A$ be the path algebra of the quiver
\[\xymatrix{1\ar[r] &  2\ar[r]& 3}\]
modulo the ideal generated by the composition of the two arrows. The Auslander-Reiten quiver of $A$ is the following.

{\footnotesize{
\begin{center}
			\begin{tikzpicture}[line cap=round,line join=round ,x=1.4cm,y=1.2cm]
				\clip(-0.4,-0.3) rectangle (4.3,2.5);

					\draw [->] (3.2,0.2) -- (3.8,0.8);
					\draw [->] (0.2,1.2) -- (0.8,1.8);
					\draw [<-, dashed] (0.2,1.0) -- (1.8,1.0);
					\draw [<-, dashed] (2.2,1.0) -- (3.8,1.0);
					\draw [->] (2.2,0.8) -- (2.8,0.2);
					\draw [->] (1.2,1.8) -- (1.8,1.2);
	
				\begin{scriptsize}
					\draw (3,0) node {$\Dim{1\\2}$};
					\draw (4,1) node {$\Dim{1}$};
					\draw (0,1) node {$\Dim{3}$};
					\draw (2,1) node {$\Dim{2}$};
					\draw (1,2) node {$\Dim{2\\3}$};
				\end{scriptsize}
			\end{tikzpicture}
\end{center}}}

Let $\ST=\add(\Dim{1\\ 2}\oplus \Dim{1}\oplus \Dim{3})$.
It is easy to see that $\ST$ is a $\tau$-tilting subcategory of $\mmod A$.
Then we have
\begin{align*}
 \SD & =  \Fac(\ST)=\add(\Dim{1\\ 2}\oplus \Dim{1}\oplus \Dim{3}), \\
 \SC & ={}^{\perp_1}\Fac(\ST)=\add(\Dim{1\\ 2}\oplus \Dim{1}\oplus \Dim{3}\oplus \Dim{2 \\ 3}).
\end{align*}

It is clear that $\Fac(\ST)$ is closed under factor modules.
However the object $\Dim{1} \in \SC$ has projective dimension $2$.
\end{example}

\begin{definition}
A triple $(\SC, \ST, \SF)$ of full subcategories in $\SA$ is called a $\tau$-cotorsion torsion triple, or simply a $\tau$-triple, if $(\SC, \ST)$ is a $\tau$-cotorsion pair and $(\ST, \SF)$ is a torsion pair.
\end{definition}

\begin{example}\label{TauTripleExample}
Let $\SA=\RepR$ and $(\SC, \SD)$ be the $\tau$-cotorsion pair as in the Example \ref{TauCotorsionExample}. Since $\SD$ is a contravariantly finite subcategory of $\SA$ which is  closed under factors and extensions, it is a torsion class. Hence $(\SC, \SD, \SD^{\perp_0})$ is a $\tau$-triple.
\end{example}

\begin{proposition}\label{CoTorImpliesTautriple}
Let  $\SA$ be an abelian category with enough projective objects. Then every cotorsion torsion triple $(\SC, \ST, \SF)$ in $\SA$ is a $\tau$-cotorsion torsion triple.
\end{proposition}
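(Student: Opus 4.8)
The plan is to verify the three axioms of a $\tau$-cotorsion pair for $(\SC, \ST)$, since the torsion pair $(\ST, \SF)$ is part of the hypothesis and transfers verbatim. The key observation underpinning everything is that, being the torsion class of the torsion pair $(\ST, \SF)$, the subcategory $\ST$ is closed under factor objects and under extensions; this is precisely what is needed to invoke the earlier lemmas. Condition $1$ of Definition~\ref{WeakCotorsion}, namely $\SC = {}^{\perp_1}\ST$, holds immediately by definition of the cotorsion pair $(\SC, \ST)$.

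For Condition $2$, fix a projective object $P$. The cotorsion pair $(\SC, \ST)$ supplies for $P$ a short exact sequence
\[
0 \lrt P \st{f}\lrt D \lrt C \lrt 0
\]
with $D \in \ST$ and $C \in \SC$. I claim $D \in \SC \cap \ST$ and that $f$ is a left $\ST$-approximation. Since $P$ is projective, $P \in \SC = {}^{\perp_1}\ST$; as $C \in \SC$ and $\SC$ is closed under extensions, the middle term $D$ also lies in $\SC$, whence $D \in \SC \cap \ST$. For the approximation property, apply $\Hom_\SA(-, T)$ for arbitrary $T \in \ST$ to the sequence: because $\Ext^1_\SA(C, T) = 0$ (as $C \in \SC = {}^{\perp_1}\ST$), the map $\Hom_\SA(D, T) \lrt \Hom_\SA(P, T)$ is surjective, which is exactly the statement that $f$ is a left $\ST$-approximation. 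Discarding the injectivity of $f$ gives the exact sequence $P \st{f}\lrt D \lrt C \lrt 0$ demanded by Condition $2$.

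Condition $3$, that $\SC \cap \ST$ be contravariantly finite, is then immediate from Lemma~\ref{ContraFinite}: the pair $(\SC, \ST)$ is a cotorsion pair and $\ST$ is closed under factors, so the lemma applies directly. Together with the given torsion pair $(\ST, \SF)$, this shows $(\SC, \ST, \SF)$ is a $\tau$-cotorsion torsion triple. I do not anticipate a serious obstacle: the real content is recognizing that the torsion-pair half of the triple forces $\ST$ to be closed under quotients, which unlocks Lemma~\ref{ContraFinite} and, with closure of $\SC$ under extensions, places $D$ in $\SC \cap \ST$. The one step warranting care is the verification that $f$ is a left $\ST$-approximation, handled by the standard $\Ext$-vanishing argument above.
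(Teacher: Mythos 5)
Your proof is correct and follows essentially the same route as the paper: both verify Condition $1$ trivially, both obtain Condition $2$ from the cotorsion-pair sequence $0 \lrt P \st{f}\lrt D \lrt C \lrt 0$ by placing $D$ in $\SC\cap\ST$ via $P\in{}^{\perp_1}\ST$ and closure of $\SC$ under extensions, and both deduce Condition $3$ from Lemma~\ref{ContraFinite} using that the torsion class $\ST$ is closed under factors. The only difference is cosmetic: you spell out the standard $\Ext$-vanishing argument showing $f$ is a left $\ST$-approximation, which the paper merely asserts.
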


\begin{proof}
Since $(\ST, \SF)$ is a torsion pair, it is enough to show that $(\SC, \ST)$ is a $\tau$-cotorsion pair.
The Condition $1$ of Definition \ref{WeakCotorsion}, holds trivially. Now let $P$ be a projective object in $\SA$ and consider the short exact sequence $0\lrt P \st{f}\lrt T \lrt C\lrt 0,$ in which $T\in\ST, C\in\SC$ and $f$ is a left $\ST$-approximation. It is exists by definition of a cotorsion pair.
To show the validity of Condition $2$ of Definition \ref{WeakCotorsion}, we just need to show that $T\in\SC\cap\ST$. This follows using the facts that $P, C \in \SC$ and $\SC$ is closed under extensions. The contravariantly finiteness of $\SC \cap \ST$ follows from Lemma \ref{ContraFinite}.
\end{proof}

\begin{proposition}\label{TauTriple}
Let $\SA$ be an abelian category with enough projective objects.
Let $(\SC, \ST, \SF)$ be a $\tau$-triple in $\SA$.
Then for every object $A\in\SA$ there exists an exact sequence \[ A\st{g}\lrt T\lrt C\lrt 0,\] where $T\in \ST$, $C\in\SC$ and $g$ is a left $\ST$-approximation.
\end{proposition}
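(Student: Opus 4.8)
The plan is to bootstrap Condition $2$ of the $\tau$-cotorsion pair $(\SC,\ST)$ from projective objects to an arbitrary object $A$ by pushing out along an epimorphism from a projective. First I would use that $\SA$ has enough projective objects to fix an epimorphism $\pi\colon P \lrt A$ with $P$ projective. Condition $2$ of Definition~\ref{WeakCotorsion} then supplies an exact sequence
\[ P \st{f}\lrt D \lrt C \lrt 0 \]
with $D \in \SC\cap\ST$, $C \in \SC$ and $f$ a left $\ST$-approximation of $P$. The idea is to form the pushout $U$ of $f$ and $\pi$, producing a commutative square with induced morphisms $\alpha\colon A \lrt U$ and $\beta\colon D \lrt U$, and then to prove that $A \st{\alpha}\lrt U \lrt C \lrt 0$ is the sequence we want, with $T=U$.

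Next I would verify the three requirements in turn. For $U \in \ST$: since $\pi$ is an epimorphism, its cobase change $\beta\colon D \lrt U$ along $f$ is again an epimorphism, so $U$ is a quotient of $D \in \ST$; because $(\ST,\SF)$ is a torsion pair, $\ST$ is closed under quotients, whence $U \in \ST$. For the cokernel term: a standard property of pushout squares identifies $\Coker(\alpha)$ with $\Coker(f)$, so $\Coker(\alpha) \cong C \in \SC$ and the sequence $A \st{\alpha}\lrt U \lrt C \lrt 0$ is exact. (Note that we do not and need not claim that $\alpha$ is a monomorphism, consistent with the formulation of the statement.) I would justify the cokernel identification briefly from the presentation $U = \Coker\big(P \st{(f,-\pi)}\lrt D\oplus A\big)$, since the composite $A \lrt U \lrt \Coker(\alpha)$ and the composite $D \lrt U \lrt \Coker(\alpha)$ show that $D \lrt \Coker(\alpha)$ is epic with kernel exactly the image of $f$.

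The remaining and most delicate point is that $\alpha$ is a left $\ST$-approximation, and here the universal property of the pushout does the work. Given any $T' \in \ST$ and a morphism $u\colon A \lrt T'$, I would consider $u\pi\colon P \lrt T'$; since $f$ is a left $\ST$-approximation there is $v\colon D \lrt T'$ with $vf = u\pi$. The cocone $(u,v)$ satisfies the compatibility $u\pi = vf$, so the universal property of the pushout yields a unique $w\colon U \lrt T'$ with $w\alpha = u$ (and $w\beta = v$); in particular $u$ factors through $\alpha$, which is precisely the left $\ST$-approximation property. I expect this last verification to be the main step, though still routine, the other two being immediate from the torsion-class property of $\ST$ and elementary pushout bookkeeping. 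Conditions $1$ and $3$ of the $\tau$-cotorsion pair play no role in this argument.
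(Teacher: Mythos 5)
Your proof is correct and takes essentially the same route as the paper: both fix an epimorphism $\pi\colon P \lrt A$ from a projective, push out the sequence $P \st{f}\lrt D \lrt C \lrt 0$ from the $\tau$-cotorsion pair along $\pi$, conclude $T \in \ST$ from closure of the torsion class under quotients, and obtain the left $\ST$-approximation property from the universal property of the pushout. The only cosmetic difference is that the paper gets the factorization by cancelling the epimorphism $\pi$ in $tg\pi = h\pi$, whereas you read $w\alpha = u$ directly off the pushout cocone, and you spell out the cokernel identification $\Coker(\alpha)\cong\Coker(f)$ that the paper leaves implicit in its diagram.
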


\begin{proof}
Let $A$ be an arbitrary object of $\SA$. Let $P\st{\pi}\lrt A\lrt 0$ be an epimorphism from a projective object in $\SA$.
By $(3)$ of Definition \ref{WeakCotorsion}, we can construct the following  pushout diagram, where $C,\tilde{C}\in \SC\cap\ST$ and $f$ is a left $\ST$-approximation
\[\xymatrix{
   &P\ar[r]^{f}\ar[d]^{\pi}& C\ar[d]\ar[r]& \tilde{C}\ar@{=}[d]\ar[r]&0\\
 &A\ar[r]^{g}\ar[d]& T\ar[r]\ar[d]& \tilde{C}\ar[r] &0\\
& 0&0\\
}\]
We show that the second row in the above diagram is the desired exact sequence.
First, we note that $T\in\ST$, because $T\in\Fac(\ST)$ and $\ST$ is closed under quotients.
In order to complete the proof, it remains to show that $g: A\lrt T$ is a left $\ST$-approximation.
Let $h: A\lrt T'$ be a morphism in $\SA$ with $T'\in\ST$.
Since $f$ is a left $\ST$-approximation, there exists a morphism $l: C\lrt T'$ such that $lf=h\pi$.
Thus the pushout property implies the existence of the morphism $t: T\lrt T'$ such that the  diagram
 \[\begin{tikzcd}
P\ar{rr}{f}\dar{\pi}&& C\dar{}\ar[bend left]{ddr}{l}\\
A\ar{rr}{g}\ar[bend right]{drrr}{h}&& T\ar[dotted]{dr}{t}\\
&&& T'
\end{tikzcd}\]
is commutative.
In other words we have that $tg\pi = h\pi$.
Since $\pi : P \lrt A$ is an epimorphism, we obtain that $tg=h$.
Hence $g$ is a left $\ST$-approximation.
\end{proof}

\begin{corollary}\label{Cor-ff}
Let $\SA$ be an abelian category with enough projective objects. Let $(\SC, \ST, \SF)$ be a $\tau$-triple in $\SA$. Then $\ST$ is  a functorially finite torsion class of $\SA$.
\end{corollary}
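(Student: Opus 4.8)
The plan is to establish the two halves of functorial finiteness separately, exploiting the fact that a $\tau$-triple packages both a torsion pair and the approximation data I need. Since $(\ST,\SF)$ is a torsion pair by the definition of a $\tau$-triple, $\ST$ is automatically a torsion class; in particular it is closed under quotients and extensions. So the whole content of the corollary is the functorial finiteness of $\ST$, that is, contravariant and covariant finiteness.

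First I would record contravariant finiteness, which comes for free from the torsion pair. For an arbitrary $A \in \SA$, take the canonical sequence
\[ 0 \lrt tA \lrt A \lrt fA \lrt 0 \]
with $tA \in \ST$ and $fA \in \SF$. Given any $X \in \ST$ and a morphism $X \lrt A$, its composite with $A \lrt fA$ lies in $\Hom_\SA(\ST,\SF)=0$, so the morphism factors through the kernel $tA \lrt A$. Hence $tA \lrt A$ is a right $\ST$-approximation of $A$, and $\ST$ is contravariantly finite.

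Next I would read off covariant finiteness directly from Proposition~\ref{TauTriple}: for every $A \in \SA$ that result produces an exact sequence
\[ A \st{g}\lrt T \lrt C \lrt 0 \]
with $T \in \ST$, $C \in \SC$, and $g$ a left $\ST$-approximation. The existence of such a $g$ for every $A$ is precisely the assertion that $\ST$ is covariantly finite.

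Combining the two finiteness statements shows that $\ST$ is functorially finite, and together with the closure of $\ST$ under quotients and extensions (and the torsion radical sequence already used) this exhibits $\ST$ as a functorially finite torsion class. I do not expect a genuine obstacle here: the substantive work has already been carried out in Proposition~\ref{TauTriple}, and the only point that deserves care is the routine verification that the torsion radical $tA \lrt A$ serves as a right $\ST$-approximation.
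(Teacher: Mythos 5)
Your proposal is correct and follows essentially the same route as the paper: the paper's proof likewise obtains contravariant finiteness from the torsion pair $(\ST,\SF)$ (your explicit check that the torsion radical $tA \lrt A$ is a right $\ST$-approximation is exactly the standard fact the paper invokes) and covariant finiteness directly from Proposition~\ref{TauTriple}. You have merely spelled out the torsion-radical factorization that the paper leaves implicit, so there is nothing to correct.
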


\begin{proof}
The contravariantly finiteness of $\ST$ follows from the fact that it is a torsion class and its covariantly finiteness follows from the previous proposition.
\end{proof}

\begin{remark}\label{BZ}
In case $\SA=\mmod \La$, where $\La$ is an artin algebra, a generalization of the notion of a cotorsion pair, called a left weak cotorsion pair, is introduced and studied in \cite{BZ}. Based on Definition 0.2 of \cite{BZ} a pair $(\SC, \SD)$ of subcategories of $\mmod \La$ is a left weak cotorsion pair if
\begin{itemize}
  \item[$(1)$] $\Ext^1_{\La}(\SC, \SD)=0$.
  \item[$(2)$] For every $M \in \mmod \La$, there are  exact sequences
  \[M \st{f}{\lrt} D \lrt C \lrt 0,\] and \[0 \lrt D' \lrt C' \st{g}{\lrt} M \lrt 0\]
  such that $C, C' \in \SC$, $D, D' \in \SD$, $f$ is a left $\SD$-approximation of $M$ and $g$ is a right $\SC$-approximation of $M$.
\end{itemize}
A triple $(\SC, \ST, \SF)$ of full subcategories in $\mmod\La$ is called a left weak cotorsion torsion triple if $(\SC, \ST)$ is a left weak cotorsion pair and $(\ST, \SF)$ is a torsion pair.
\end{remark}

In the following theorem we show that in the module category of an artin algebra $\tau$-triples are exactly left weak cotorsion torsion triples.

\begin{theorem}\label{lwcotorsion}
Let $\SA=\mmod\La$, where $\La$ is  an artin algebra. Then  the triple $(\SC, \ST, \SF)$ of full subcategories in $\SA$ is a $\tau$-triple if and only if it is a left weak cotorsion torsion triple.
\end{theorem}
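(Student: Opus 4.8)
The plan is to isolate what the two definitions share and then reconcile the two places where they differ. Both a $\tau$-triple and a left weak cotorsion torsion triple require $(\ST,\SF)$ to be a torsion pair, so it suffices to compare the defining properties of the pair $(\SC,\ST)$. Two of the three ingredients transfer immediately. First, the condition $\SC={}^{\perp_1}\ST$ of a $\tau$-cotorsion pair yields $\Ext^1_\SA(\SC,\ST)=0$, which is condition $(1)$ of a left weak cotorsion pair; and Proposition~\ref{TauTriple}, applied to every $M\in\mmod\La$, produces exactly the first exact sequence $M\st{f}\lrt D\lrt C\lrt 0$ (with $D\in\ST$, $C\in\SC$ and $f$ a left $\ST$-approximation) demanded by condition $(2)$. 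The single most useful observation for the converse is a splitting argument: if $(\SC,\ST,\SF)$ is a left weak cotorsion torsion triple and $X\in{}^{\perp_1}\ST$, then applying the second sequence to $X$ gives $0\lrt D'\lrt C'\lrt X\lrt 0$ with $C'\in\SC$, $D'\in\ST$; since $\Ext^1_\SA(X,D')=0$ this sequence splits, so $X$ is a direct summand of $C'\in\SC$ and hence $X\in\SC$. Combined with condition $(1)$ this upgrades $\Ext^1_\SA(\SC,\ST)=0$ to the full orthogonality $\SC={}^{\perp_1}\ST$, reconciling the a priori asymmetric orthogonality conditions of the two definitions.

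With these pieces in hand, the cleanest route to the remaining content is to factor both notions through support $\tau$-tilting modules. By Theorem~\ref{Bijection}, together with the identification of support $\tau$-tilting subcategories of $\mmod\La$ with categories of the form $\add M_0$ for a support $\tau$-tilting module $M_0$ (Proposition~\ref{EquivalentDefinitions} and Remark~\ref{Jasso2.14}), any $\tau$-triple $(\SC,\ST,\SF)$ satisfies $\SC\cap\ST=\add M_0$ and $\ST=\Fac(\SC\cap\ST)=\Gen M_0$, with $\ST$ a functorially finite torsion class by Corollary~\ref{Cor-ff}. On the other side, the bijection of Buan and Zhou~\cite{BZ} identifies left weak cotorsion torsion triples with exactly the triples arising from support $\tau$-tilting modules $M_0$, again with $\SC\cap\ST=\add M_0$ and $\ST=\Gen M_0$. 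In both cases $\SF=\ST^{\perp_0}$ is forced by the torsion pair, and $\SC={}^{\perp_1}\ST$ holds (by definition for a $\tau$-triple, by the splitting argument for a left weak cotorsion torsion triple). Hence each triple is completely reconstructed from $M_0$ as $\bigl({}^{\perp_1}(\Gen M_0),\ \Gen M_0,\ (\Gen M_0)^{\perp_0}\bigr)$. Therefore a triple is a $\tau$-triple precisely when it has this form for some support $\tau$-tilting module $M_0$, which is precisely when it is a left weak cotorsion torsion triple; this gives both implications at once, and in particular the existence of the second sequence for a $\tau$-triple is obtained by matching it with the corresponding triple of~\cite{BZ}.

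The \emph{main obstacle} is the construction underlying the forward implication that is \emph{not} supplied by the $\tau$-triple axioms, namely the second exact sequence $0\lrt D'\lrt C'\lrt M\lrt 0$ with $C'\in\SC$, $D'\in\ST$ and $C'\lrt M$ a right $\SC$-approximation. A $\tau$-cotorsion pair only postulates left approximation sequences for projective objects, so there is an inherent left/right asymmetry; producing a right $\SC$-approximation whose kernel lies in the (possibly strictly smaller than $\SC^{\perp_1}$) torsion class $\ST$ is exactly the step where finite-dimensionality over an artin algebra is essential, and where the structure theory of support $\tau$-tilting modules of~\cite{AIR} and~\cite{BZ} does the substantive work. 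The splitting observation of the first paragraph is then the only genuinely new ingredient needed to glue the two correspondences together, since it forces the classes $\SC$ produced by the two theories to coincide. In the converse direction the analogous tasks — verifying the $\tau$-cotorsion pair condition for projective objects and the contravariant finiteness of $\SC\cap\ST$ — become automatic once the triple is recognised as $\Phi(\add M_0)$, and may alternatively be checked by hand using $\SC\cap\ST=\add M_0$ and Lemma~\ref{ContraFinite}.
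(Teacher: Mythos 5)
Your proposal is correct, but it takes a genuinely different route from the paper's. The paper proves the forward implication constructively: given a $\tau$-triple and $M\in\mmod\La$, it takes a projective presentation $P_1\st{\beta}\lrt P_0\lrt M\lrt 0$, applies the $\tau$-cotorsion-pair sequence to the projective $P_1$, and forms a pushout to manufacture the second sequence $0\lrt \Ker g\lrt C'\st{g}\lrt M\lrt 0$ by hand, checking $C'\in\SC$ via a factorization argument through the image of $P_1$ in $T\oplus P_0$ and $\Ker g\in\ST$ via quotient-closure of the torsion class; for the converse it cites only the lighter ingredients \cite[Lemma~4.1, Theorem~0.4]{BZ} and the Auslander--Smal\o{} sequence from \cite{AS}. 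You instead classify both kinds of triples by support $\tau$-tilting modules --- every $\tau$-triple is $\bigl({}^{\perp_1}(\Gen M_0), \Gen M_0, (\Gen M_0)^{\perp_0}\bigr)$ via Theorem~\ref{Bijection} and Proposition~\ref{EquivalentDefinitions}, and every left weak cotorsion torsion triple has the same shape via the Buan--Zhou bijection --- and match the two classifications; in particular the existence of the right $\SC$-approximation sequence, which you correctly flag as the one piece the $\tau$-triple axioms do not hand you, is obtained by citing \cite[Theorem~4.6]{BZ} rather than by construction. Your splitting argument upgrading $\Ext^1_\SA(\SC,\ST)=0$ to $\SC={}^{\perp_1}\ST$ is a nice self-contained replacement for \cite[Lemma~4.1]{BZ} (note it needs $\SC$ closed under direct summands, which holds under the additivity conventions in force), and it is also what forces a left weak cotorsion torsion triple to be determined by its functorially finite torsion class $\ST=\Gen M_0$, so the matching does go through. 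The trade-off: your route is shorter and conceptually transparent, but it consumes the full strength of \cite[Theorem~4.6]{BZ}, whereas the paper deliberately avoids it --- indeed the paper \emph{recovers} that theorem as a corollary of Theorem~\ref{lwcotorsion}, Theorem~\ref{Bijection} and Proposition~\ref{EquivalentDefinitions}, and with your proof that corollary would become circular. The paper's pushout construction keeps the equivalence self-contained at the level of approximation theory, which is exactly what makes its re-derivation of the Buan--Zhou bijection non-vacuous.
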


\begin{proof}
First, let $(\SC, \ST, \SF)$ be a $\tau$-triple.
Since $(\ST, \SF)$ is already a torsion pair, we just need to show that $(\SC, \ST)$ is a left weak cotorsion pair.
The Condition $(1)$ of the definition  of a left weak cotorsion pair follows by the first condition of the definition of $\tau$-cotorsion pair.
Also for $M\in \mmod\La$, by Proposition \ref{TauTriple}, there is an exact sequence \[M \st{f}{\lrt} T \lrt C \lrt 0\] where $T\in\ST$, $C\in\SC$ and $f$ is a left $\ST$-approximation.
Now for $M\in \mmod\La$, we construct a short exact sequence as in the Condition $(2)$ of the definition of left weak cotorsion pairs.
Let $P_1\st{\beta}\lrt P_0\lrt M\lrt 0$ be a projective
presentation of $M$.
By Condition $(2)$ of the definition of a $\tau$-cotorsion pair, for projective module $P_1$, there exists an exact sequence $P_1\st{\alpha}\lrt  T\lrt C\lrt 0$.
Consider the pushout diagram
\[\xymatrix{
& P_1\ar[d]^{\beta} \ar[r]^{\alpha} & T \ar[d]\ar[r] &C \ar@{=}[d]\ar[r] & 0 \\
& P_0\ar[r]\ar[d] & {C'}  \ar[r]\ar[d]^{g} &C \ar[r] & 0 \\
& M\ar[d]\ar@{=}[r]& M\ar[d]\\
&0&0}
\]
In view of the construction of pushouts in $\mmod\La$, we have the exact sequence
\[ P_1\st{\phi=\begin{bmatrix} \alpha\\ \beta \end{bmatrix}}\lrt T\oplus P_0\lrt C'\lrt 0,\]
and therefore  the short exact sequence
\[ 0\lrt \im\phi\st{\varphi: \begin{bmatrix} \iota\\ \kappa \end{bmatrix}}\lrt T\oplus P_0\lrt C'\lrt 0.\]
Now let $\pi: P_1\lrt \im\varphi\lrt 0$  and let $\psi:\im\varphi \lrt T'$ be a morphism with $T'\in\ST$.
We show that $\psi$ factors through $\varphi$.
Since $\alpha$ is a left $\ST$-approximation, then there exists a morphism $\gamma: T\lrt T'$ such that $\psi\pi=\gamma\alpha=\gamma\iota\pi$.
Thus $\psi=\gamma\iota$. By applying the functor $\Hom_\La(-,\ST)$ on the above short exact sequence and using the fact that $T\oplus P_0\in\SC$, we  get $C'\in\SC$.
Now since $\ST$ is a torsion class, it is closed under quotients, and so we have $\Ker g\in \ST$. Therefore the short exact sequence
\[0\lrt \Ker g\lrt C'\st{g}\lrt M\lrt 0\]
is the desired one.

Now we show the converse.
Let $(\SC, \ST, \SF)$ be a left weak cotorsion torsion triple.
Again since $(\ST, \SF)$ is a torsion pair, we just need to show that $(\SC, \ST)$ is a $\tau$-cotorsion pair.
By \cite[Lemma 4.1]{BZ}, we have the Condition $(1)$ of the definition of $\tau$-cotorsion pair.
To conclude the Condition $(2)$ of the definition of $\tau$-cotorsion pair, we note that $\ST$ is a functorially finite torsion class and by \cite{AS} there exists an exact sequence
\[ P \st{f}{\lrt} T \lrt C \lrt 0,\]
where $f$ is a left $\ST$-approximation, $T\in\ST\cap \SC$, $C\in\SC$.
Finally, the Condition $(3)$ of  the definition  of $\tau$-cotorsion pairs follows by \cite[Theorem 0.4]{BZ}.
\end{proof}

\section{$\tau$-tilting subcategories and $\tau$-triples}\label{Sec:Bijection}
In this section, we show that there is a bijection between the collection of all support $\tau$-tilting subcategories of $\SA$ and the collection of all $\tau$-cotorsion torsion triples in $\SA$.
In case we start with a tilting subcategory, this bijection specializes to the one introduced in \cite[Theorem 2.29]{BBOS}.
We prepare the ground with some preliminary results.

\begin{lemma}\label{IJY}
Let $\SA$ be an abelian category with enough projective objects.
Let $\ST$ be a support $\tau$-tilting subcategory of $\SA$. Then ${}^{\perp_1} \Fac(\ST) \cap \Fac(\ST)=\ST$.
\end{lemma}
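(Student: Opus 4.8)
The plan is to prove the two inclusions ${}^{\perp_1}\Fac(\ST)\cap\Fac(\ST)\supseteq\ST$ and ${}^{\perp_1}\Fac(\ST)\cap\Fac(\ST)\subseteq\ST$ separately, the first being routine and the second carrying all the content. For $\ST\subseteq{}^{\perp_1}\Fac(\ST)\cap\Fac(\ST)$ I would first note $\ST\subseteq\Fac(\ST)$, since every $T\in\ST$ is a quotient of itself. For $\ST\subseteq{}^{\perp_1}\Fac(\ST)$ I would observe that, as $\ST$ is additive, every object of $\Fac(\ST)$ is a quotient of a single object of $\ST$, so condition $(i)$ of Definition~\ref{DefTauTilting}, namely $\Ext^1_{\SA}(T_1,\Fac(T_2))=0$, upgrades at once to $\Ext^1_{\SA}(\ST,\Fac(\ST))=0$.

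Before attacking the reverse inclusion I would set up some torsion-theoretic structure. I claim $\Fac(\ST)$ is closed under extensions: given $0\lrt A\lrt B\lrt C\lrt 0$ with $A,C\in\Fac(\ST)$, choose epimorphisms $T_A\twoheadrightarrow A$ and $T_C\twoheadrightarrow C$ from $\ST$; since $\Ext^1_{\SA}(T_C,A)=0$ by the inclusion just proved, the map $T_C\to C$ lifts to $T_C\to B$, and together with $T_A\to A\to B$ this produces an epimorphism $T_A\oplus T_C\twoheadrightarrow B$, whence $B\in\Fac(\ST)$. Combined with the evident closure under quotients, this makes $\Fac(\ST)$ a torsion class, paired with $\SF=\ST^{\perp_0}$, the torsion subobject of any $A$ being the image of a right $\ST$-approximation (which exists by contravariant finiteness). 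This torsion pair is what lets me reduce a kernel to its torsion part.

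Now take $X\in{}^{\perp_1}\Fac(\ST)\cap\Fac(\ST)$. My goal is to produce a short exact sequence $0\lrt K\lrt T\lrt X\lrt 0$ with $T\in\ST$ and $K\in\Fac(\ST)$; once this is available, $X\in{}^{\perp_1}\Fac(\ST)$ forces $\Ext^1_{\SA}(X,K)=0$, so the sequence splits and $X$ becomes a direct summand of $T\in\ST$, giving $X\in\ST$ because $\ST$ is additively closed. To build such a sequence I would imitate the diagram in the proof of Proposition~\ref{addgentautilt}: take a projective presentation $P_1\lrt P_0\lrt X\lrt 0$, apply condition $(ii)$ of Definition~\ref{DefTauTilting} to $P_0$ and $P_1$ to obtain $P_i\st{f_i}{\lrt}T^0_i\lrt T^1_i\lrt 0$ with $f_i$ a left $\ST$-approximation, and use $X\in\Fac(\ST)$ to factor $P_0\to X$ through $f_0$, producing an epimorphism $k_0\colon T^0_0\twoheadrightarrow X$ together with the comparison maps between the two columns.

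The main obstacle is exactly to check that the kernel of the resulting epimorphism lies in $\Fac(\ST)$. An arbitrary epimorphism from $\ST$ onto $X$ will \emph{not} suffice, since its kernel can acquire a nonzero torsion-free part in $\SF$: already for the algebra $A$ with $\ST=\add(\Dim{1\\2}\oplus\Dim{1}\oplus\Dim{3})$ the kernel of $\Dim{1\\2}\twoheadrightarrow\Dim{1}$ is $\Dim{2}$, which escapes $\Fac(\ST)$, so that sequence cannot split. I would therefore extract $K$ from the commutative diagram, realizing it as an extension assembled from the cokernels $T^1_i$ and the kernels occurring in the columns, and then invoke closure of $\Fac(\ST)$ under extensions to conclude $K\in\Fac(\ST)$. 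Verifying cleanly that this approximation-controlled kernel carries no torsion-free part — which is where condition $(ii)$ is genuinely used, as opposed to the bare membership $X\in\Fac(\ST)$ — is the crux of the whole argument.
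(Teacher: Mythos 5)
Your reduction is correct up to the point you yourself flag as ``the crux,'' but the crux is precisely what is missing, and the assembly you sketch does not close it. Following your recipe, the kernel $K=\Ker k_0$ of the approximation-controlled epimorphism $k_0\colon T^0_0\twoheadrightarrow X$ sits in an exact sequence
\[0\lrt f_0(\Ker h_0)\lrt K\lrt T^1_0\lrt 0,\]
since $k_0f_0=h_0$ epi gives $T^0_0=\im f_0+K$, whence $K$ surjects onto $T^1_0=\Coker f_0$ with kernel $K\cap\im f_0=f_0(\Ker h_0)=\im(f_0h_1)$. The piece $T^1_0$ does lie in $\ST$, but $\im(f_0h_1)$ is a quotient of the projective $P_1$, and projectives are typically \emph{not} in $\Fac(\ST)$ --- that is the whole point of ``support.'' So the ``kernels occurring in the columns'' are not in $\Fac(\ST)$ and closure under extensions cannot be invoked. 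Worse, your construction provably produces the very pathology you warned against: in the paper's own example (the quiver $1\to 2\to 3$ modulo paths of length two, $\ST=\add(\Dim{1\\ 2}\oplus\Dim{1}\oplus\Dim{3})$), take $X=\Dim{1}\in{}^{\perp_1}\Fac(\ST)\cap\Fac(\ST)$. Its projective cover is $P_0=\Dim{1\\ 2}$, which already lies in $\ST$, so $f_0=\mathrm{id}$ is a legitimate left $\ST$-approximation with $T^1_0=0$, and your diagram returns exactly $k_0=h_0\colon\Dim{1\\ 2}\twoheadrightarrow\Dim{1}$ with kernel $\Dim{2}\notin\Fac(\ST)$, and the sequence does not split. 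Condition $(ii)$ of Definition~\ref{DefTauTilting} does not, by itself, steer the kernel away from the torsion-free class.

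The intermediate statement you need --- every $X\in{}^{\perp_1}\Fac(\ST)\cap\Fac(\ST)$ admits $0\lrt K\lrt T\lrt X\lrt 0$ with $T\in\ST$ and $K\in\Fac(\ST)$ --- is true, but it is essentially equivalent in difficulty to the lemma itself. In $\mmod\La$ it is exactly Zhang's Proposition~2.5, which the paper cites in the proof of Proposition~\ref{EquivalentDefinitions}, and whose proof uses $\tau$-rigidity via Auslander--Smal\o\ duality and minimal approximations in a Krull--Schmidt category; none of these tools is available in a general abelian category. The paper's own proof of Lemma~\ref{IJY} is entirely different and citation-based: it identifies $\SA\simeq\mmod\Prj(\SA)$ using Beligiannis' result on categories with enough projectives and then invokes part $(ii)$ of the proof of \cite[Proposition~5.3]{IJY}, which rests on the two-term silting machinery of that paper. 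Your easy inclusion $\ST\subseteq{}^{\perp_1}\Fac(\ST)\cap\Fac(\ST)$ and the torsion-theoretic preliminaries are correct (they agree with Propositions~\ref{TauTiltingImpliesWeakCotorsion} and \ref{tau-torsion pair}), but the reverse containment remains unproved, so the proposal has a genuine gap.
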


\begin{proof}
Since  $\SA$ is an abelian category with  enough projectives $\Prj(\SA)$, we have $\SA\simeq\mmod\Prj(\SA)$, where $\mmod\Prj(\SA)$ is the category of finitely presented functors on $\Prj(\SA)$, see \cite[Corollaries 3.9 and 3.10]{Be1}.
Now the result  follows by the part $(ii)$ of  the proof of \cite[Proposition~5.3]{IJY}.
\end{proof}

\begin{proposition}\label{TauTiltingImpliesWeakCotorsion}
Let $\SA$ be an abelian category with enough projective objects.
Let $\ST$ be a  support $\tau$-tilting subcategory of  $\SA$.
Then
\[  ({}^{\perp_1} \Fac(\ST), \Fac(\ST))\]
is a $\tau$-cotorsion pair.
\end{proposition}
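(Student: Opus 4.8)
The plan is to put $\SC = {}^{\perp_1}\Fac(\ST)$ and $\SD = \Fac(\ST)$ and check the three conditions of Definition~\ref{WeakCotorsion} in turn. Condition $1$ holds by construction, since $\SC$ is defined to be ${}^{\perp_1}\SD = {}^{\perp_1}\Fac(\ST)$. For the structural input needed in Conditions $2$ and $3$, I would first record that condition $(i)$ of Definition~\ref{DefTauTilting} says precisely $\Ext^1_\SA(\ST, \Fac(\ST)) = 0$, that is, $\ST \subseteq {}^{\perp_1}\Fac(\ST) = \SC$; and trivially $\ST \subseteq \Fac(\ST) = \SD$. Hence every object of $\ST$ lies in $\SC \cap \SD$.

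For Condition $2$, I would start from the defining exact sequence of the support $\tau$-tilting subcategory: for a projective $P$ there is $P \st{f}\lrt T^0 \lrt T^1 \lrt 0$ with $T^0, T^1 \in \ST$ and $f$ a left $\ST$-approximation. By the previous paragraph $T^0 \in \SC \cap \SD$ and $T^1 \in \SC$, so it only remains to upgrade $f$ from a left $\ST$-approximation to a left $\SD$-approximation. I expect this to be the main (though still mild) obstacle. Given any morphism $g \colon P \lrt D'$ with $D' \in \SD = \Fac(\ST)$, I would choose an epimorphism $\varepsilon \colon T \lrt D'$ with $T \in \ST$; since $P$ is projective, $g$ lifts to some $\tilde g \colon P \lrt T$ with $\varepsilon \tilde g = g$; because $f$ is a left $\ST$-approximation and $T \in \ST$, there is $h \colon T^0 \lrt T$ with $h f = \tilde g$; then $\varepsilon h \colon T^0 \lrt D'$ satisfies $(\varepsilon h) f = \varepsilon \tilde g = g$, so $g$ factors through $f$. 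This shows $f$ is a left $\Fac(\ST)$-approximation, completing Condition $2$.

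Finally, for Condition $3$ I would invoke Lemma~\ref{IJY}, which gives $\SC \cap \SD = {}^{\perp_1}\Fac(\ST) \cap \Fac(\ST) = \ST$. Since $\ST$ is assumed contravariantly finite (being a support $\tau$-tilting, rather than merely weak support $\tau$-tilting, subcategory), $\SC \cap \SD$ is contravariantly finite in $\SA$, establishing Condition $3$. With all three conditions verified, $({}^{\perp_1}\Fac(\ST), \Fac(\ST))$ is a $\tau$-cotorsion pair. The real content of the argument is packaged into Lemma~\ref{IJY}; the rest is the approximation-lifting step above and bookkeeping of the membership $\ST \subseteq \SC \cap \SD$.
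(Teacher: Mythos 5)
Your proposal is correct and follows essentially the same route as the paper: the same lifting argument (epimorphism from $T\in\ST$, projectivity of $P$, then the left $\ST$-approximation property) upgrades $f$ to a left $\Fac(\ST)$-approximation, and Lemma~\ref{IJY} supplies contravariant finiteness of $\SC\cap\SD=\ST$. The only (cosmetic) difference is that you verify the inclusion $\ST\subseteq{}^{\perp_1}\Fac(\ST)\cap\Fac(\ST)$ directly from condition $(i)$ of Definition~\ref{DefTauTilting}, whereas the paper cites Lemma~\ref{IJY} for it — your version is in fact slightly more economical, since only Condition~$3$ genuinely needs the reverse inclusion of that lemma.
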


\begin{proof}
The first condition of Definition \ref{WeakCotorsion} holds trivially.
For the second condition, consider the exact sequence
\[P\st{f}\lrt T^0\lrt T^1\lrt 0\] where $f$ is a left $\ST$-approximation of $P$ and $T^0, T^1\in\ST$, which exists for every projective object $P$.
Now by Lemma \ref{IJY}, we observe that $T^0, T^1\in {}^{\perp_1} \Fac(\ST) \cap \Fac(\ST)$.
So it remains to show that $f$ is a left $\Fac(\ST)$-approximation of $P$.
To show this, let $X\in\Fac(\ST)$ and  let $g: P\lrt X$ be a morphism.
Consider an epimorphism $\pi:T\lrt X$ with $T\in\ST$.
Since $P$ is a projective object, there is a morphism $h: P\lrt T$ such that $\pi h=g$.
Now because $f$ is a left $\ST$-approximation, there is a morphism $t: T^0\lrt T$ such that $tf=h$.
Therefore a morphism $\pi t: T^0\lrt X$ exists such that $\pi t f=\pi h=g$.
Finally Lemma \ref{IJY}, implies that ${}^{\perp_1} \Fac(\ST) \cap \Fac(\ST)=\ST$ is a contravariantly finite subcategory of $\SA$.
\end{proof}

By \cite[Proposition 2.22]{BBOS} if $\ST$ is a tilting subcategory of $\SA$, then the pair $(\Fac(\ST), \ST^{\perp_0})$ is a torsion pair.
We use the same technique to prove the validity of the same result for $\tau$-tilting subcategories.

\begin{proposition}\label{tau-torsion pair}
Let $\SA$ be an abelian category with enough projective objects. If $\ST\subseteq\SA$ is a $\tau$-tilting subcategory, then $(\Fac(\ST), \ST^{\perp_0})$ is a torsion pair.
\end{proposition}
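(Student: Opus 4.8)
The plan is to check the two defining conditions of a torsion pair for the pair $(\Fac(\ST),\ST^{\perp_0})$: the $\Hom$-orthogonality, and the existence, for each object of $\SA$, of a canonical short exact sequence whose torsion part lies in $\Fac(\ST)$ and whose torsion-free part lies in $\ST^{\perp_0}$. Recall that $\ST^{\perp_0}=\lbrace A\in\SA \mid \Hom_\SA(\ST,A)=0\rbrace$.

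First I would dispatch the orthogonality $\Hom_\SA(\Fac(\ST),\ST^{\perp_0})=0$. Given $F\in\Fac(\ST)$ and $G\in\ST^{\perp_0}$, choose an epimorphism $\pi\colon T\lrt F$ with $T\in\ST$. For any morphism $\phi\colon F\lrt G$, the composite $\phi\pi\colon T\lrt G$ lies in $\Hom_\SA(\ST,G)=0$ by definition of $\ST^{\perp_0}$; since $\pi$ is an epimorphism, $\phi=0$. This gives the orthogonality.

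Next, for the canonical sequence I would use that $\ST$ is contravariantly finite (being $\tau$-tilting, hence support $\tau$-tilting). Fix $A\in\SA$ and a right $\ST$-approximation $\phi\colon T\lrt A$. Set $tA=\im\phi$, which is a quotient of $T\in\ST$ and so lies in $\Fac(\ST)$, and set $fA=\Coker\phi$, producing the short exact sequence
\[0\lrt tA\lrt A\st{p}{\lrt}fA\lrt 0.\]
The only nontrivial point, which I expect to be the main obstacle, is to show $fA\in\ST^{\perp_0}$, i.e. $\Hom_\SA(\ST,fA)=0$.

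To establish this I would run a pullback–split–factor argument, the same technique used for tilting subcategories in \cite[Proposition~2.22]{BBOS}. Take $T'\in\ST$ and a morphism $\psi\colon T'\lrt fA$. Pulling the displayed sequence back along $\psi$ yields a short exact sequence $0\lrt tA\lrt E\lrt T'\lrt 0$. Since $tA\in\Fac(\ST)$ and $T'\in\ST$, condition $(i)$ of Definition~\ref{DefTauTilting} gives $\Ext^1_\SA(T',tA)=0$, so this sequence splits; composing a section $T'\lrt E$ with the pullback map $E\lrt A$ produces $s\colon T'\lrt A$ with $ps=\psi$ (using commutativity of the pullback square). Because $\phi$ is a right $\ST$-approximation and $T'\in\ST$, the map $s$ factors as $s=\phi t$ for some $t\colon T'\lrt T$; but $p\phi=0$ since $\im\phi=\Ker p$, whence $\psi=ps=p\phi t=0$. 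Therefore $fA\in\ST^{\perp_0}$, and the two conditions together show that $(\Fac(\ST),\ST^{\perp_0})$ is a torsion pair. The crux is exactly this last paragraph, where both the vanishing $\Ext^1_\SA(\ST,\Fac(\ST))=0$ and the approximation property of $\phi$ are used in an essential way.
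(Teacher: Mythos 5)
Your proof is correct and takes essentially the same route as the paper: both arguments build the candidate sequence from the image and cokernel of a right $\ST$-approximation $\phi\colon T\lrt A$, and deduce $\Coker\phi\in\ST^{\perp_0}$ from exactly the two ingredients you isolate, namely $\Ext^1_{\SA}(\ST,\Fac(\ST))=0$ and the approximation property of $\phi$. The only difference is presentational --- the paper reads off the vanishing from the long exact sequence obtained by applying $\Hom_{\SA}(\ST,-)$ to $0\lrt \im\phi\lrt A\lrt \Coker\phi\lrt 0$, while your pullback--split--factor argument is the diagram-level unwinding of that same sequence.
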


\begin{proof}
Let $X\in\Fac(\ST)$. Then there exists an epimorphism $\ST\lrt X\lrt 0$  with $T\in \ST$. The  exact sequence $0\lrt \Hom_\SA(X, Y)\lrt \Hom_\SA(T, Y)$ shows that $\Hom_\SA(X, Y)=0$ whenever $Y\in \ST^{\perp_0}$.

Now let $A\in\SA$ be an arbitrary object. Since $\ST$ is contravariantly finite subcategory, there exists a right $\ST$-approximation $\varphi: T\lrt A$. Consider the short exact sequence
\[0\lrt \im \varphi \st{f}\lrt A \lrt \Coker\varphi\lrt 0,\]
where $\im \varphi \in \Fac(\ST)$. By applying the functor $\Hom_\SA(\ST, -)$, we have a long exact exact sequence
\[0\lrt \Hom_\SA(-, \im\varphi)\vert_{\ST}\st{f_*} \lrt \Hom_\SA(-, A)\vert_{\ST} \lrt \Hom_\SA(-, \Coker \varphi)\vert_{\ST} \lrt \Ext^1_\SA(-, \im \varphi)\vert_{\ST}.\]
Now since $f$ is a right $\ST$-approximation, $f_*$ is an epimorphism. Also since $\ST$ is $\tau$-tilting and  $\im\varphi \in \Fac(\ST)$, $\Ext^1_\SA(-, \im \varphi)\vert_{\ST}=0$. So $\Hom_\SA(-, \Coker \varphi)\vert_{\ST}=0$ and hence $\Coker\varphi\in \ST^{\perp_0}$.
\end{proof}

As a consequence of this proposition we can record the following corollary, which is a generalization of \cite[Theorem 2.7]{AIR}, which proves the same result for the case when $\SA=\mmod\La$, the category of finitely generated modules over an artin algebra $\La$.

\begin{corollary}\label{FunctoriallyFiniteTorsion}
Let $\SA$ be an abelian category with enough projective objects. Let $\ST$ be a support $\tau$-tilting subcategory of $\SA$. Then $\Fac(\ST)$ is a functorially finite torsion class of $\SA$.
\end{corollary}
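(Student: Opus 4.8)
The plan is to exhibit $\Fac(\ST)$ as the torsion class occupying the middle slot of a $\tau$-triple and then to invoke Corollary~\ref{Cor-ff}, which already packages the implication ``middle term of a $\tau$-triple $\Rightarrow$ functorially finite torsion class''. Thus the whole task reduces to assembling the two pairs that constitute such a triple. For the cotorsion half, I would simply note that, $\ST$ being a support $\tau$-tilting subcategory, Proposition~\ref{TauTiltingImpliesWeakCotorsion} applies directly and gives that $({}^{\perp_1}\Fac(\ST), \Fac(\ST))$ is a $\tau$-cotorsion pair; this is the step that consumes the data of $\ST$ on projective objects (Condition $(ii)$ of Definition~\ref{DefTauTilting}).

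For the torsion half I must show that $(\Fac(\ST), \ST^{\perp_0})$ is a torsion pair, which is exactly the assertion of Proposition~\ref{tau-torsion pair}. Although that proposition is phrased for $\tau$-tilting subcategories, its proof uses only two features of $\ST$: contravariant finiteness, which produces a right $\ST$-approximation $\varphi\colon T\lrt A$ of each $A\in\SA$, and the Ext-vanishing Condition $(i)$, which forces $\Ext^1_\SA(-,\im\varphi)\vert_{\ST}=0$ because $\im\varphi\in\Fac(\ST)$. Both properties hold for any support $\tau$-tilting subcategory, so the same argument transfers verbatim: orthogonality $\Hom_\SA(\Fac(\ST),\ST^{\perp_0})=0$ follows from epimorphisms out of objects of $\ST$, and the canonical sequence $0\lrt\im\varphi\lrt A\lrt\Coker\varphi\lrt 0$ supplies the torsion decomposition, with $\Coker\varphi\in\ST^{\perp_0}$ obtained from the long exact $\Hom_\SA(\ST,-)$ sequence.

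Combining the two steps, $({}^{\perp_1}\Fac(\ST),\Fac(\ST),\ST^{\perp_0})$ is a $\tau$-triple, and Corollary~\ref{Cor-ff} then immediately yields that $\Fac(\ST)$ is a functorially finite torsion class. The delicate point, and the only place where anything beyond formal bookkeeping happens, is covariant finiteness of $\Fac(\ST)$: contravariant finiteness is automatic, since for a torsion pair the map $tA\lrt A$ is always a right $\Fac(\ST)$-approximation, but covariant finiteness is not, and it is precisely the $\tau$-cotorsion pair structure that supplies it, through Proposition~\ref{TauTriple} (used inside the proof of Corollary~\ref{Cor-ff}), which manufactures a left $\Fac(\ST)$-approximation $A\lrt T$ of every object. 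The hard part is therefore really just making sure this covariant-finiteness input is present; the secondary thing to watch is the passage from the $\tau$-tilting hypothesis of Proposition~\ref{tau-torsion pair} to the support $\tau$-tilting hypothesis here, which, as explained above, costs nothing.
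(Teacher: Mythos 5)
Your proposal is correct and follows exactly the paper's own proof: the paper likewise assembles the $\tau$-triple $({}^{\perp_1}\Fac(\ST),\Fac(\ST),\ST^{\perp_0})$ from Propositions~\ref{TauTiltingImpliesWeakCotorsion} and~\ref{tau-torsion pair} and then cites Corollary~\ref{Cor-ff}. Your explicit check that the proof of Proposition~\ref{tau-torsion pair}, though stated for $\tau$-tilting subcategories, uses only contravariant finiteness and the Ext-vanishing condition and hence applies verbatim to support $\tau$-tilting subcategories is a point the paper passes over silently, and it is the right thing to verify.
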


\begin{proof}
By Propositions \ref{TauTiltingImpliesWeakCotorsion} and \ref{tau-torsion pair},  $({}^{\perp_1} \Fac(\ST), \Fac(\ST), \ST^{\perp_0})$ is a $\tau$-cotorsion torsion triple. So the result follows from Corollary \ref{Cor-ff}.
\end{proof}

\begin{lemma}\label{WeakCotorsionImpliesTauTilting}
Let $\SA$ be an abelian category with enough projective objects.
Let $(\SC, \ST, \SF)$ be a $\tau$-cotorsion torsion triple in  $\SA$.
Then $\SC\cap\ST$ is a support $\tau$-tilting subcategory of $\SA$.
\end{lemma}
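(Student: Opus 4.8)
The plan is to verify directly the three defining conditions of a support $\tau$-tilting subcategory (Definition~\ref{DefTauTilting}) for the subcategory $\SC\cap\ST$, reading everything off the hypotheses that $(\SC,\ST)$ is a $\tau$-cotorsion pair and that $(\ST,\SF)$ is a torsion pair. The single structural fact I would lean on throughout is that $\ST$, being a torsion class, is closed under quotient objects; in particular $\Fac(\ST)=\ST$ and, more generally, $\Fac(X)\subseteq\ST$ for every $X\in\ST$. I would also observe at the outset that $\SC={}^{\perp_1}\ST$ is closed under finite direct sums and summands, and that $\ST$ is closed under summands (a summand is a quotient), so $\SC\cap\ST$ is an additive full subcategory. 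Contravariant finiteness is then immediate, as it is literally Condition $3$ of Definition~\ref{WeakCotorsion}.

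For Condition $(i)$ of Definition~\ref{DefTauTilting}, I would take $T_1,T_2\in\SC\cap\ST$. Since $T_2\in\ST$ and $\ST$ is closed under quotients, $\Fac(T_2)\subseteq\ST$; since $T_1\in\SC={}^{\perp_1}\ST$, this gives $\Ext^1_\SA(T_1,X)=0$ for every $X\in\Fac(T_2)$, which is exactly what is required. For Condition $(ii)$, I would start from the exact sequence supplied by Condition $2$ of Definition~\ref{WeakCotorsion}: for a projective object $P$ there is $P\st{f}\lrt D\lrt C\lrt 0$ with $D\in\SC\cap\ST$, $C\in\SC$, and $f$ a left $\ST$-approximation. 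The one point needing an argument is that $C$ in fact lies in $\SC\cap\ST$, not merely in $\SC$: this holds because $C$ is a quotient of $D\in\ST$ and $\ST$ is closed under quotients, whence $C\in\ST$ and so $C\in\SC\cap\ST$. Setting $T^0=D$ and $T^1=C$ then gives a sequence of the required shape with both terms in $\SC\cap\ST$.

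It remains to check that $f$ is a left $(\SC\cap\ST)$-approximation. This is automatic: $f$ is a left $\ST$-approximation with target $D\in\SC\cap\ST$, and since $\SC\cap\ST\subseteq\ST$, the surjectivity of $\SA(D,T)\lrt\SA(P,T)$ for all $T\in\ST$ restricts to the surjectivity for all $T\in\SC\cap\ST$; hence $f$ is a left $(\SC\cap\ST)$-approximation as well. I do not anticipate a genuine obstacle here, as the statement is essentially an unravelling of the definitions. The whole content is the observation that the torsion-class property of $\ST$ (closure under quotients) is precisely what upgrades $C\in\SC$ to $C\in\SC\cap\ST$ and what places all $\Fac$-objects inside $\ST$; the only mild care needed is to track the direction of the approximation condition, namely that passing from the larger class $\ST$ to the smaller class $\SC\cap\ST$ preserves the left-approximation property.
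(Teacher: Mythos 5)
Your proof is correct and follows essentially the same route as the paper's: contravariant finiteness is read off Condition~$3$ of Definition~\ref{WeakCotorsion}, the Ext-vanishing comes from $\SC={}^{\perp_1}\ST$ together with $\Fac(\ST)=\ST$ (closure of the torsion class under quotients), and Condition~$(ii)$ uses the sequence from Condition~$2$, with the left $\ST$-approximation automatically restricting to a left $(\SC\cap\ST)$-approximation. If anything you are slightly more careful than the paper, which simply asserts that both terms of the sequence lie in $\SC\cap\ST$, whereas you explicitly justify that the cokernel, given only as an object of $\SC$, lands in $\ST$ because it is a quotient of an object of $\ST$.
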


\begin{proof}
 First we observe that, by  the definition of $\tau$-cotorsion torsion triples, $\SC\cap\ST$ is a contravaraintly finite subcategory of $\SA$ and
\[ \Ext^1_\SA(\SC\cap\ST, \Fac(\SC\cap\ST))\subseteq \Ext^1_\SA(\SC, \Fac(\ST))=\Ext^1_\SA(\SC, \ST)=0.\]
Moreover, for every projective object $P$, there is an exact sequence
\[P\st{f}\lrt  T \lrt T' \lrt 0\]
where $T, T' \in \SC \cap \ST$ and $f$ is a left $\ST$-approximation.
To verify the last condition of support $\ST$-tilting subcategories, it is enough to note that the left $\ST$-approximation $f$ is also a left $\SC\cap\ST$-approximation.
\end{proof}

\begin{lemma}\label{FacIntersection}
Let $\SA$ be an abelian category with enough projective objects. Let $(\SC, \ST, \SF)$ be a $\tau$-cotorsion torsion triple in   $\SA$. Then
$\ST=\Fac(\SC\cap\ST)$.
\end{lemma}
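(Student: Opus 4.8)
The plan is to prove the two inclusions $\Fac(\SC\cap\ST) \subseteq \ST$ and $\ST \subseteq \Fac(\SC\cap\ST)$ separately. The first inclusion is the easy direction: since $(\ST,\SF)$ is a torsion pair, $\ST$ is closed under quotients, and $\SC\cap\ST \subseteq \ST$ immediately gives $\Fac(\SC\cap\ST) \subseteq \Fac(\ST) = \ST$. So the real content lies entirely in the reverse inclusion $\ST \subseteq \Fac(\SC\cap\ST)$, which is where I expect the main obstacle to be.

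For the hard inclusion, the natural strategy is to take an arbitrary object $T \in \ST$ and produce it as a quotient of an object in $\SC\cap\ST$. First I would choose an epimorphism $P \st{\pi}\lrt T \lrt 0$ from a projective object $P$, which exists since $\SA$ has enough projective objects. By Condition~$2$ of the definition of a $\tau$-cotorsion pair (Definition~\ref{WeakCotorsion}), applied to $P$, there is an exact sequence
\[ P \st{f}\lrt D \lrt C \lrt 0, \]
where $D \in \SC\cap\ST$, $C\in\SC$, and $f$ is a left $\ST$-approximation of $P$. Since $T \in \ST$ and $\pi : P \lrt T$ is a morphism into $\ST$, the approximation property yields a morphism $t : D \lrt T$ with $t f = \pi$.

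The key step is then to argue that $t$ is an epimorphism. Since $\pi = t f$ is an epimorphism, the image of $t$ contains the image of $\pi$, which is all of $T$; hence $t$ is itself an epimorphism. This exhibits $T$ as a quotient of $D \in \SC\cap\ST$, so $T \in \Fac(\SC\cap\ST)$, completing the inclusion $\ST \subseteq \Fac(\SC\cap\ST)$. I expect this surjectivity argument to be the crux: the whole point is that the left $\ST$-approximation $f$ factors the chosen epimorphism $\pi$, forcing the comparison map $t$ to be surjective as well.

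The main obstacle I anticipate is ensuring that one really does get an epimorphism $t$ rather than merely a factorization, and verifying that $D$ genuinely lies in $\SC\cap\ST$ (which is guaranteed by Definition~\ref{WeakCotorsion}, Condition~$2$) so that the object witnessing $T \in \Fac(\SC\cap\ST)$ is admissible. Both points are handled by the approximation property together with the elementary fact that if a composite $tf$ is epic then $t$ is epic; no delicate homological computation should be required. I would close by combining the two inclusions to conclude $\ST = \Fac(\SC\cap\ST)$.
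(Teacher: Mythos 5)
Your proposal is correct and follows essentially the same route as the paper's proof: the easy inclusion via closure of the torsion class $\ST$ under quotients, and for $\ST \subseteq \Fac(\SC\cap\ST)$, factoring an epimorphism $\pi : P \lrt T$ from a projective through the left $\ST$-approximation $f : P \lrt D$ with $D \in \SC\cap\ST$ supplied by the $\tau$-cotorsion pair, then noting that $tf = \pi$ epic forces $t$ epic. (The paper cites this as Condition~$(3)$ of Definition~\ref{WeakCotorsion}, apparently a slip for Condition~$2$, which you cite correctly.)
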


\begin{proof}
Since $\ST$ is closed under factors, we observe that  $\Fac(\SC\cap\ST)\subseteq \ST$. Now let $T\in\ST$ and $P\st{\pi}\lrt T\lrt 0$ be an epimorphism with projective object $P$.
By Condition $(3)$ of Definition \ref{WeakCotorsion}, there is a left $\ST$-approximation $f: P\lrt T'$, where $T' \in\SC\cap\ST$.
So there exists an epimorphism $g: T' \lrt T$ such that $gf=\pi$. Hence $T \in \Fac(\SC\cap\ST)$.
\end{proof}

\begin{theorem}\label{Bijection}
Let $\SA$ be an abelian category with enough projective objects. Then there exits a bijection between the collection of all support $\tau$-tilting subcategories of $\SA$ and the collections of all $\tau$-cotorsion torsion triples in $\SA$. This bijection induces by the following maps
\[\lbrace \mbox{support $\tau$-tilting subcategories}\rbrace ~~~~\longleftrightarrow~~~~ \lbrace \mbox{$\tau$-cotorsion torsion triples}\rbrace\]
 $\  \  \  \  \  \ \ \ \ \ \ \ \ \ \ \ \ \ \ \ \ \ \ \ \ \  \ \ \ \ \ \ \ \ \ \ \ \ \ \ \ \ \  \ \ \ \ \ \ \ \  \ \ \ \ST~~~~\st{\Phi}\longrightarrow~~~~~({}^{\perp_1}{(\rm{Fac}(\ST))}, \rm{Fac}(\ST), \ST^{\perp_0})$
\[\  \ \ \SC\cap\ST~~\st{\Psi}\longleftarrow ~~~(\SC, \ST, \SF)\]
that are well defined and mutually inverse. Moreover, this bijection restricts to the bijection between the collection of all tilting subcategories of $\SA$ and the collections of all cotorsion torsion triples in $\SA$.
\end{theorem}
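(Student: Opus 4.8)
The plan is to establish that $\Phi$ and $\Psi$ are well-defined maps and then verify they are mutually inverse, relying heavily on the preparatory results already proved. First I would confirm that $\Phi$ is well-defined: given a support $\tau$-tilting subcategory $\ST$, Proposition~\ref{TauTiltingImpliesWeakCotorsion} shows that $({}^{\perp_1}\Fac(\ST), \Fac(\ST))$ is a $\tau$-cotorsion pair, and Proposition~\ref{tau-torsion pair} shows that $(\Fac(\ST), \ST^{\perp_0})$ is a torsion pair. Together these say precisely that $\Phi(\ST) = ({}^{\perp_1}\Fac(\ST), \Fac(\ST), \ST^{\perp_0})$ is a $\tau$-cotorsion torsion triple. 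Dually, Lemma~\ref{WeakCotorsionImpliesTauTilting} establishes that $\Psi$ is well-defined, since for any $\tau$-triple $(\SC, \ST, \SF)$ the intersection $\SC \cap \ST$ is a support $\tau$-tilting subcategory.

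Next I would verify $\Psi \circ \Phi = \mathrm{id}$. Starting from a support $\tau$-tilting subcategory $\ST$, we compute
\[
\Psi(\Phi(\ST)) = {}^{\perp_1}\Fac(\ST) \cap \Fac(\ST),
\]
and Lemma~\ref{IJY} gives exactly ${}^{\perp_1}\Fac(\ST) \cap \Fac(\ST) = \ST$, so this composite is the identity. This direction is essentially immediate once Lemma~\ref{IJY} is in hand.

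Then I would verify $\Phi \circ \Psi = \mathrm{id}$, which is the step I expect to be the main obstacle. Starting from a $\tau$-triple $(\SC, \ST, \SF)$ and setting $\ST_0 = \SC \cap \ST = \Psi(\SC, \ST, \SF)$, I must show that
\[
\big({}^{\perp_1}\Fac(\ST_0),\ \Fac(\ST_0),\ \ST_0^{\perp_0}\big) = (\SC, \ST, \SF).
\]
The key is Lemma~\ref{FacIntersection}, which yields $\Fac(\ST_0) = \Fac(\SC \cap \ST) = \ST$, immediately matching the middle entries. Once $\Fac(\ST_0) = \ST$ is known, the first entry becomes ${}^{\perp_1}\Fac(\ST_0) = {}^{\perp_1}\ST = \SC$, using Condition~$1$ of the definition of a $\tau$-cotorsion pair (that $\SC = {}^{\perp_1}\SD$ with $\SD = \ST$). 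For the third entry, $\ST_0^{\perp_0} = (\Fac(\ST_0))^{\perp_0} = \ST^{\perp_0} = \SF$, where the first equality holds because any object killed on $\ST_0$ by $\Hom$ is killed on all its factors, and the last equality is the uniqueness of the torsion-free class determined by the torsion class $\ST$ in the torsion pair $(\ST, \SF)$.

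Finally I would treat the restriction claim. Here I would show that under $\Phi$ a genuine tilting subcategory $\ST$ (that is, one where every approximation $P \to T^0$ is a monomorphism, giving the short exact sequence of Definition~\ref{HigherDef(BBOS)}) maps to a genuine cotorsion torsion triple, and conversely that $\Psi$ carries a cotorsion torsion triple back to a tilting subcategory; the heart of this is that the monomorphism condition is precisely what upgrades the three-term sequence $P \to D \to C \to 0$ of a $\tau$-cotorsion pair to the short exact sequence $0 \to P \to D \to C \to 0$ defining a cotorsion pair, so one must check this injectivity is preserved in both directions. The main difficulty throughout is bookkeeping the passage between the ``projective-only'' approximation conditions of the $\tau$-setting and the ``all objects'' conditions, but Proposition~\ref{TauTriple} (extending the defining sequence from projectives to arbitrary objects) already does the crucial work, so the argument reduces to assembling the cited lemmas rather than proving anything substantially new.
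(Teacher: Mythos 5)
Your treatment of the main bijection coincides with the paper's proof step for step: well-definedness of $\Phi$ and $\Psi$ via Propositions~\ref{TauTiltingImpliesWeakCotorsion} and \ref{tau-torsion pair} and Lemma~\ref{WeakCotorsionImpliesTauTilting}, the composite $\Psi\Phi$ via Lemma~\ref{IJY}, and the composite $\Phi\Psi$ via Lemma~\ref{FacIntersection}; your extra remark that $(\SC\cap\ST)^{\perp_0}=(\Fac(\SC\cap\ST))^{\perp_0}=\ST^{\perp_0}$ is correct and in fact fills in a detail the paper states without comment. Up to this point the proposal is sound.

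The gap is in the restriction claim. You reduce the difference between a $\tau$-cotorsion pair and a cotorsion pair to the injectivity of the approximation $P\to T^0$, and you assert that Proposition~\ref{TauTriple} ``already does the crucial work'' of passing from projectives to arbitrary objects. But Proposition~\ref{TauTriple} produces only the preenvelope-type sequences $A\to T\to C\to 0$ (which, granted the mono input and the fact that pushouts of monomorphisms along arbitrary maps are monomorphisms in an abelian category, upgrade to $0\to A\to D'\to C'\to 0$). The paper's definition of a cotorsion pair also demands, for \emph{every} object $A$, a sequence of the other type, $0\to D\to C\to A\to 0$ with $C\in\SC$ and $D\in\SD$, and none of the lemmas you cite produces it. The paper supplies it by a separate construction: choose $0\to K\to P\to A\to 0$ with $P$ projective, embed $K$ via $0\to K\to X_K\to Y_K\to 0$ with $X_K\in\Fac(\ST)$ and $Y_K\in{}^{\perp_1}(\Fac(\ST))$, and form the pushout; the middle term $U$ of the resulting sequence $0\to X_K\to U\to A\to 0$ lies in ${}^{\perp_1}(\Fac(\ST))$ because $P$ and $Y_K$ do and that class is closed under extensions. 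Without this step your verification that $\Phi$ carries tilting subcategories to cotorsion torsion triples is incomplete. For the converse direction, note also that injectivity alone does not suffice: given a cotorsion torsion triple, the sequence $0\to P\to D\to C\to 0$ a priori has $D\in\SC\cap\ST$ but only $C\in\SC$, and establishing $C\in\ST$ together with the condition that objects of $\SC\cap\ST$ have projective dimension at most one requires further argument. The paper sidesteps this by implicitly invoking Theorem~\ref{2.29} (namely \cite[Theorem~2.29]{BBOS}), whose maps agree with $\Phi$ and $\Psi$; your plan to re-derive this direction by hand is legitimate but is genuinely more than the bookkeeping you describe.
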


\begin{proof}
It follows from Propositions  \ref{TauTiltingImpliesWeakCotorsion} and \ref{tau-torsion pair} and Lemma \ref{WeakCotorsionImpliesTauTilting} that $\Phi$ and $\Psi$ are well defined. We show that they are mutually inverse.
Let $\ST$ be a support $\tau$-tilting subcategory.
We have that $\ST = {}^{\perp_1}(\Fac(\ST))\cap \Fac(\ST)$ by Lemma~\ref{IJY}.
Thus $\Psi\Phi\cong 1$.

Now let $(\SC, \ST, \SF)$ be a $\tau$-cotorsion torsion triple.
First we note that by Lemma \ref{FacIntersection}, $\ST=\Fac(\SC\cap\ST)$ and so $\SC={}^{\perp_1}{\ST}={}^{\perp_1} (\Fac (\SC\cap\ST))$.
Also we have $\SF=(\SC\cap\ST)^{\perp_0}$. Hence $\Phi\Psi\cong 1$ and so the first statement holds.

We now prove that this bijections restrict to the bijections between cotorsion torsion triples in $\SA$ and tilting subcategories in $\SA$.
Let $\ST$ be a tilting subcategory.
We show that the $\tau$-triple \[({}^{\perp_1}{(\rm{Fac}(\ST))}, \rm{Fac}(\ST), \ST^{\perp_0})\] is a cotorsion torsion triple.
To see this, it is enough to show that $({}^{\perp_1}{(\rm{Fac}(\ST))}, \rm{Fac}(\ST))$ is a cotorsion pair.
First we note that, since $\ST$ is a tilting subcategory of $\SA$, for every object $A\in\SA$ there is a short exact sequence
\[0\lrt A\lrt X_A\lrt Y_A\lrt 0\]
where $X_A\in\Fac(\ST)$ and $Y_A\in {}^{\perp_1}{(\rm{Fac}(\ST))}$.
Next we construct the second short exact sequence in the definition of cotorsion pair.
Let $A\in\SA$ and  let $0\lrt K\lrt P\lrt A\lrt 0$ be a short exact sequence in $\SA$, where $P$ is projective.
Let $0\lrt K\lrt X_K\lrt Y_K\lrt 0$ be a short exact sequence such that $X_K\in\Fac(\ST)$ and $Y_K\in{}^{\perp_1}{(\rm{Fac}(\ST))}$.
Consider the following pushout diagram
\[\xymatrix{
& & 0\ar[d]&0\ar[d]\\
  &0\ar[r] &K\ar[r]\ar[d]& P\ar[d]\ar[r]& A\ar@{=}[d]\ar[r]&0\\
 &0\ar[r] &X_K\ar[r]\ar[d]& U\ar[r]\ar[d]&A \ar[r] &0\\
&& Y_K\ar[d]\ar@{=}[r]&Y_K\ar[d]\\
&&0&0
}\]
Since $P, Y_K\in{}^{\perp_1}{(\rm{Fac}(\ST))}$ and ${}^{\perp_1}{(\rm{Fac}(\ST))}$ is closed under extensions, then $U\in{}^{\perp_1}{(\rm{Fac}(\ST))}$.
Hence the second row in the diagram is the desired short exact sequence.
Finally, it is clear  that $\Ext^1_\SA({}^{\perp_1}{(\rm{Fac}(\ST))}, \Fac(\ST))=0$.
So we have verified all of the conditions of a cotorsion pair.
\end{proof}

As a corollary of the above theorem, we can recover one of  the main results of \cite{BZ}. For this, we need some preparations. Let us begin by recalling the original
definition of a support $\tau$-tilting module \cite{AIR}. This definition is based on $\tau: \mmod \La \to \mmod \La$, the Auslander-Reiten translation in $\mmod \La$, see \cite[Chapter~IV]{ASS}.

\begin{definition}(see \cite[Definition~0.1]{AIR})\label{DefAIR}
Let $\Lambda$ be an artin algebra. A module $T$ in $\mmod\La$ is called $\tau$-rigid if $\Hom_{\La}(T, \tau T)=0$. It is called $\tau$-tilting if it is $\tau$-rigid and $|T|=|\La|$. A support $\tau$-tilting module $T$ in $\mmod \Lambda$ is a module $T$ that is a $\tau$-tilting module in $\mmod(\Lambda/\langle e\rangle)$, where $\langle e \rangle$ is the ideal generated by some idempotent $e \in \Lambda$.
\end{definition}

 As it is mentioned  in Remark  \ref{Jasso2.14}, by \cite[Proposition 2.14]{J}  $T$ is a support $\tau$-tilting module in $\mmod\La$ if and only if $\add(T)$ is a support $\tau$-tilting subcategory of $\mmod\La$.
We now show that every support $\tau$-tilting subcategory in $\mmod\La$ is of the form $\add(T)$ for some $\tau$-tilting module $T$ in $\mmod\La$.

\begin{proposition}\label{EquivalentDefinitions}
Let $\Lambda$ be an artin algebra. Then every support $\tau$-tilting subcategory $\ST$ of $\mmod \La$ is of the form $\add(T)$, where $T$ is a support $\tau$-tilting module in $\mmod \La$.
\end{proposition}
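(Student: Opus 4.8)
The plan is to reduce the statement to machinery already developed, in particular Proposition~\ref{addgentautilt}, Remark~\ref{Jasso2.14} and Lemma~\ref{IJY}. The structural feature of $\mmod\La$ that makes this work is that $\La$ is a projective generator, so $\Prj(\mmod\La)=\add(\La)$. Thus $\mmod\La$ is an abelian category with enough projective objects whose subcategory of projectives has the form $\add(P)$ with $P=\La$, which is exactly the hypothesis of Proposition~\ref{addgentautilt}.

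First I would apply Proposition~\ref{addgentautilt} to the given support $\tau$-tilting subcategory $\ST$. This produces a support $\tau$-tilting object $T\in\ST$ (in the sense of Definition~\ref{TauTiltingObjectDef}) with the key property $\Fac(\ST)=\Fac(T)$. Next I would invoke Remark~\ref{Jasso2.14}: since $\SA=\mmod\La$, the categorical notion of a support $\tau$-tilting object coincides, via \cite[Proposition~2.14]{J}, with the classical notion of a support $\tau$-tilting module of \cite{AIR}. Hence $T$ is a support $\tau$-tilting module in $\mmod\La$, and by the equivalence recalled just before the statement, $\add(T)$ is itself a support $\tau$-tilting subcategory of $\mmod\La$. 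Contravariant finiteness of $\add(T)$ is automatic here, since $T$ has only finitely many indecomposable summands.

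The final and decisive step is to upgrade the equality of the categories of factors into an equality of the subcategories themselves. Both $\ST$ and $\add(T)$ are support $\tau$-tilting subcategories, so Lemma~\ref{IJY} applies to each, giving
\[
\ST = {}^{\perp_1}\Fac(\ST)\cap\Fac(\ST)
\qquad\text{and}\qquad
\add(T) = {}^{\perp_1}\Fac(T)\cap\Fac(T).
\]
Since $\Fac(T)=\Fac(\add(T))$ and, by the choice of $T$, $\Fac(\ST)=\Fac(T)$, the two right-hand sides coincide; therefore $\ST=\add(T)$. Equivalently, one may phrase this through the bijection of Theorem~\ref{Bijection}: the subcategories $\ST$ and $\add(T)$ are sent by $\Phi$ to the same $\tau$-cotorsion torsion triple, because they share the torsion class $\Fac(\ST)=\Fac(T)$, and injectivity of $\Phi$ forces $\ST=\add(T)$.

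I expect the only subtlety to be conceptual rather than computational, namely ensuring that the support $\tau$-tilting \emph{object} manufactured by Proposition~\ref{addgentautilt} is genuinely a support $\tau$-tilting \emph{module} in the sense of \cite{AIR}. This is precisely what Remark~\ref{Jasso2.14} secures, so there is no real obstacle; the heart of the argument is the observation, encapsulated in Lemma~\ref{IJY}, that a support $\tau$-tilting subcategory is completely recovered from its category of factors.
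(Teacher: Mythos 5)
Your proof is correct, and it shares its first half with the paper's argument: the paper's proof likewise begins by invoking Proposition~\ref{addgentautilt} to produce $T \in \ST$ with $\Fac(\ST)=\Fac(T)$, with the object/module identification via \cite[Proposition~2.14]{J} (Remark~\ref{Jasso2.14}) handled just as you do. Where you diverge is in the decisive final step. The paper proves the two inclusions directly: $\add(T)\subseteq\ST$ because $T\in\ST$ and $\ST$ is additively closed, and for $\ST\subseteq\add(T)$ it cites \cite[Proposition~2.5]{Z} to obtain, for $X\in\ST\subseteq\Fac(T)$, a short exact sequence $0\to K'\to T'\to X\to 0$ with $T'\in\add(T)$ and $K'\in\Fac(T)$, which splits since $\Ext^1_\La(X,K')=0$ (as $X\in\ST$ and $K'\in\Fac(\ST)$), giving $X\in\add(T)$. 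You instead apply Lemma~\ref{IJY} to both $\ST$ and $\add(T)$ and conclude from $\Fac(\ST)=\Fac(\add(T))$ that the two recovery formulas have the same right-hand side; equivalently, you exploit that $\Psi\Phi=1$ in Theorem~\ref{Bijection}. Your route buys conceptual clarity — it isolates the principle that a support $\tau$-tilting subcategory is completely determined by its torsion class — and avoids the external reference to \cite{Z}, at the price of first having to certify (via Jasso) that $\add(T)$ is itself a support $\tau$-tilting subcategory so that Lemma~\ref{IJY} applies to it; the paper's splitting argument is more elementary and needs only the $\Ext^1$-vanishing built into the definition. One small point of care in your bijection phrasing: to say $\Phi(\ST)=\Phi(\add(T))$ you should note that the third components $\ST^{\perp_0}$ and $\add(T)^{\perp_0}$ also agree, which follows since the torsion-free class of a torsion pair is determined by the torsion class; your primary formulation via Lemma~\ref{IJY} sidesteps this entirely and is airtight as written.
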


\begin{proof}
Let $\ST$ be a support $\tau$-tilting subcategory of $\mmod \La$.
By Proposition \ref{addgentautilt}, there exists a  support $\tau$-tilting module $T$ such that $\Fac(T)=\Fac(\ST)$.
We show that $\ST=\add(T)$.  First we note that by the construction of $T$ in Proposition \ref{addgentautilt}, we have $T\in\ST$. Therefore $\add(T)\in\ST$, since $T$ is additively closed subcategory of $\mmod\La$. Next, let $X\in\ST$, then $X\in\Fac(T)$.
By Proposition 2.5 of \cite{Z}, there exists a short exact sequence
\[0\lrt K'\lrt T'\lrt X\lrt 0\]
where $T'\in\add(T)$ and $K' \in\Fac(T)$.
Since $T$ is a $\tau$-tilting module, the above short exact sequence splits and  hence $X\in\add(T)$. Thus $\ST=\add(T)$.
 \end{proof}

\begin{corollary}(see \cite[Theorem 4.6]{BZ})
Let $\SA=\mmod\La$, where $\La$ is an artin algebra.
Then there is a bijection between the collection of all support $\tau$-tilting modules and the collection of all left weak cotorsion torsion triples.
\end{corollary}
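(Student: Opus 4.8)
The plan is to obtain the claimed bijection by composing three correspondences that have already been established, so that the corollary becomes essentially a bookkeeping statement with one point requiring care.

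First I would recall the passage between support $\tau$-tilting \emph{modules} and support $\tau$-tilting \emph{subcategories} of $\mmod\La$. By the discussion following Remark~\ref{Jasso2.14}, which rests on \cite[Proposition~2.14]{J}, a module $T$ is a support $\tau$-tilting module precisely when $\add(T)$ is a support $\tau$-tilting subcategory; conversely Proposition~\ref{EquivalentDefinitions} shows that every support $\tau$-tilting subcategory $\ST$ of $\mmod\La$ arises as $\add(T)$ for some support $\tau$-tilting module $T$. Together these yield the assignment $T\mapsto\add(T)$ as a surjection from support $\tau$-tilting modules onto support $\tau$-tilting subcategories of $\mmod\La$.

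Second, I would invoke Theorem~\ref{Bijection} to identify support $\tau$-tilting subcategories of $\mmod\La$ with $\tau$-cotorsion torsion triples in $\mmod\La$ via the mutually inverse maps $\Phi$ and $\Psi$. Third, Theorem~\ref{lwcotorsion} identifies the collection of $\tau$-cotorsion torsion triples in $\mmod\La$ with the collection of left weak cotorsion torsion triples, these being literally the same triples of subcategories. Composing the three correspondences produces the map sending a support $\tau$-tilting module $T$ to the left weak cotorsion torsion triple $({}^{\perp_1}(\Fac(\add(T))),\Fac(\add(T)),(\add(T))^{\perp_0})$, and I expect the verification that this composite is a bijection to be immediate once the first correspondence is made precise.

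The main obstacle, and indeed the only step requiring attention, is that $T\mapsto\add(T)$ is not injective on isomorphism classes of arbitrary modules, since $\add(T)=\add(T')$ whenever $T$ and $T'$ have the same indecomposable direct summands. To upgrade the surjection of the first step into a genuine bijection I would restrict to basic support $\tau$-tilting modules, equivalently pass to the equivalence relation $T\sim T'$ iff $\add(T)=\add(T')$; since $\mmod\La$ is a Krull–Schmidt category, each support $\tau$-tilting subcategory has a unique basic generator, so $\add(T)=\add(T')$ forces $T\cong T'$ among basic modules. With this identification the first correspondence is bijective, and the composite with the bijections of Theorems~\ref{Bijection} and~\ref{lwcotorsion} gives the asserted bijection, recovering \cite[Theorem~4.6]{BZ}.
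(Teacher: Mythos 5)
Your proposal is correct and follows essentially the same route as the paper, whose proof simply cites the composition of Proposition~\ref{EquivalentDefinitions}, Theorem~\ref{Bijection} and Theorem~\ref{lwcotorsion}. Your additional remark about passing to basic modules (or to the equivalence $\add(T)=\add(T')$) to make $T\mapsto\add(T)$ injective is a careful point the paper leaves implicit, but it does not change the argument.
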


\begin{proof}
This is a direct consequence of Theorem~\ref{lwcotorsion}, Theorem~\ref{Bijection} and Proposition~\ref{EquivalentDefinitions}.
\end{proof}

\section{Summary of dual results}\label{Sec:Dual}
In this section we collect the dual of our results in the previous sections. The proofs are similar, so we just list the results without their proofs.
Throughout this section we assume that $\SA$ is an abelian category with enough injective objects. For a subcategory $\SU$ of $\SA$, let $\Sub\SU$ be the full subcategory of $\SA$ consisting of all subobjects of finite direct sums of objects in  $\SU$.

We start by the definition of a $\tau^{-}$-tilting subcategory of $\SA$. Recall \cite[\S.2.2]{AIR} that a $\La$-module $M$, where $\La$ is an artin algebra, is called $\tau^{-}$-tilting if it is $\tau^{-}$-rigid, i.e. $\Hom_{\La}(\tau^{-}M, M)=0$, and $|M|=|\La|$. It follows from \cite[Proposition 5.6]{AS} that $M$ is $\tau^{-}$-rigid if and only if $\Ext^1_{\SA}(\Sub M,M)=0.$ Here $\Sub M$ means the subcategory of $\mmod \La$ consisting of all subobjects of $\add(M)$.
This motivates the following definition.

\begin{definition}\label{DefTauInverseTilting}
Let $\SA$ be an abelian category with enough injective objects. Let $\SU$ be an additive full subcategory of $\SA$. Then $\SU$ is called a  weak support $\tau^{-}$-tilting subcategory if
\begin{itemize}
  \item[$1.$] $\Ext^1_{\SA}(\Sub\SU,\SU)=0.$
  \item[$2.$] For every injective object $I$ in $\SA$, there exists an exact sequence
  \[0\lrt U^0\lrt U^1\st{g}\lrt I\]
  such that $U^0$ and $U^1$ are in $\SU$ and $g$ is a right $\SU$-approximation of $I$.
\end{itemize}
If furthermore $\SU$ is a covariantly finite subcategory of $\SA$, it is called a support $\tau^-$-tilting subcategory of $\SA$.
A support $\tau^-$-tilting subcategory $\SU$ of $\SA$ is called a $\tau^-$-tilting subcategory if
 the approximation $g: U^1 \lrt I$ is non-zero for every injective object $I$.
\end{definition}

\begin{definition}\label{TauInverseCotorsion}
Let $\SA$ be an abelian category with enough injective objects. A pair of full subcategories $(\SC, \SD)$ of $\SA$ is called a $\tau^-$-cotorsion pair if
\begin{itemize}
\item[$1.$] $\SD = \SC^{\perp_1}$.
\item[$2.$] For every injective object $I \in \SA$, there is an exact sequence
\[ 0\lrt D\lrt C\st{g}\lrt I,\]
 where $C \in \SC\cap\SD$, $D \in \SD$ and $g$ is a right $\SC$-approximation.
\item[$3.$] $\SC\cap\SD$ is a covariantly finite subcategory of $\SA$.
\end{itemize}
\end{definition}

\begin{definition}
A triple $(\ST, \SF, \SD)$ of full subcategories in $\SA$ is called a $\tau^-$-torsion cotorsion triple, or simply a $\tau^-$-triple, if $(\ST, \SF)$ is a torsion pair and $(\SF, \SD)$ is a  $\tau^-$-cotorsion pair.
\end{definition}

A triple $(\ST, \SF, \SD)$ of subcategories of $\SA$ is called a torsion cotorsion triple if $(\ST, \SF)$ is a torsion pair and $(\SF, \SD)$ is a cotorsion pair.

\begin{proposition}(Dual of Proposition \ref{CoTorImpliesTautriple})
Let $\SA$ be an abelian category with enough injective objects. Then  every  torsion cotorsion pair $(\ST, \SF, \SD)$ in $\SA$ is a $\tau^-$-triple.
\end{proposition}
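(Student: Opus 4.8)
The plan is to transcribe the proof of Proposition~\ref{CoTorImpliesTautriple} into its dual form. The hypothesis already supplies that $(\ST,\SF)$ is a torsion pair, so the whole task reduces to checking that $(\SF,\SD)$ is a $\tau^-$-cotorsion pair in the sense of Definition~\ref{TauInverseCotorsion}, i.e.\ verifying its three conditions with the role of $\SC$ there played by $\SF$. Two facts will be used throughout, both free from the hypotheses: since $(\SF,\SD)$ is a (complete) cotorsion pair we have $\SF={}^{\perp_1}\SD$, $\SD=\SF^{\perp_1}$, and both classes are closed under extensions; and since $\SF$ is the torsion-free class of the torsion pair $(\ST,\SF)$, it is closed under subobjects.

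Condition $1$, namely $\SD=\SF^{\perp_1}$, is then immediate from the cotorsion pair. For Condition $2$, I would fix an injective object $I$ and feed it into the defining short exact sequence of the cotorsion pair, obtaining
\[0\lrt D\lrt C\st{g}\lrt I\lrt 0\]
with $C\in\SF$ and $D\in\SD$. Applying $\Hom_\SA(F,-)$ for an arbitrary $F\in\SF$ and using $\Ext^1_\SA(F,D)=0$ (as $F\in{}^{\perp_1}\SD$) shows that $\Hom_\SA(F,C)\lrt\Hom_\SA(F,I)$ is surjective, so $g$ is a right $\SF$-approximation. It remains to promote $C\in\SF$ to $C\in\SF\cap\SD$: the injective $I$ satisfies $I\in\SF^{\perp_1}=\SD$, and as $D,I\in\SD$ with $\SD$ closed under extensions, the middle term $C$ also lies in $\SD$. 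This is precisely dual to the step in Proposition~\ref{CoTorImpliesTautriple} where one deduces $T\in\SC\cap\ST$ from $P,C\in\SC$.

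Finally, Condition $3$ — the covariant finiteness of $\SF\cap\SD$ — would follow from the dual of Lemma~\ref{ContraFinite}, applied to the cotorsion pair $(\SF,\SD)$: where that lemma extracts contravariant finiteness of the intersection from the second class being closed under factors, its dual extracts covariant finiteness of $\SF\cap\SD$ from the first class $\SF$ being closed under subobjects, which holds here exactly because $\SF$ is a torsion-free class. I expect no genuine difficulty beyond careful bookkeeping of the dualization — epimorphisms versus monomorphisms, left versus right approximations, factors versus subobjects. The one substantive point to confirm is that the dual of Lemma~\ref{ContraFinite} is indeed available among the dual results of this section, since Condition $3$ rests entirely on it; the remaining verifications are a formal mirror of the proof of Proposition~\ref{CoTorImpliesTautriple}.
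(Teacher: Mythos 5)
Your proof is correct and takes exactly the approach the paper intends: the paper states this proposition in Section~\ref{Sec:Dual} without proof, declaring it the formal dual of Proposition~\ref{CoTorImpliesTautriple}, and your argument is precisely that dualization — Condition~$1$ from the cotorsion pair, Condition~$2$ by promoting $C$ to $\SF\cap\SD$ via $I\in\SF^{\perp_1}=\SD$ and extension-closure of $\SD$ (mirroring $P\in{}^{\perp_1}\ST=\SC$ in the original), and Condition~$3$ from the dual of Lemma~\ref{ContraFinite}, which is indeed available since its proof dualizes verbatim with the torsion-free class $\SF$ closed under subobjects playing the role of $\SD$ closed under factors. No gaps.
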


\begin{proposition}(Dual of Proposition \ref{TauTriple})\label{TauInverseTriple}
Let $\SA$ be an abelian category with enough injective objects. Let $(\ST, \SF, \SD)$ be a $\tau^-$-triple in $\SA$. Then for every object $A\in\SA$, there exists an exact sequence \[  0\lrt D\lrt F\st{g}\lrt A,\] where $D\in \SD$, $F\in\SF$ and $g$ is a right $\SF$-approximation.
\end{proposition}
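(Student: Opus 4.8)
The plan is to dualize the proof of Proposition~\ref{TauTriple} step by step, exchanging the projective epimorphism for an injective monomorphism and the pushout for a pullback. First I would fix an arbitrary object $A\in\SA$ and, since $\SA$ has enough injective objects, choose a monomorphism $\iota\colon A\lrt I$ with $I$ injective. Applying Condition~$2$ of Definition~\ref{TauInverseCotorsion} to the injective object $I$ produces an exact sequence $0\lrt D\lrt C\st{g_I}\lrt I$ with $C\in\SF\cap\SD$, $D\in\SD$, and $g_I$ a right $\SF$-approximation of $I$.

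Next I would form the pullback $F$ of $g_I$ along $\iota$, yielding a commutative square with vertical maps $g\colon F\lrt A$ and $g_I\colon C\lrt I$ and horizontal maps $\iota$ and a projection $p\colon F\lrt C$. Two facts drop out of this square. Since a pullback identifies the kernels of its parallel vertical maps, $\Ker g\cong\Ker g_I=D$, which gives the required exact sequence $0\lrt D\lrt F\st{g}\lrt A$. Moreover, because $\iota$ is a monomorphism, the projection $p$ is a monomorphism; since $\SF$ is the torsion-free class of the torsion pair $(\ST,\SF)$ it is closed under subobjects, so from $C\in\SF$ I conclude $F\in\SF$. This is precisely dual to the step in Proposition~\ref{TauTriple} where $T\in\ST$ is obtained as a quotient of an object of the torsion class.

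The main point requiring care, exactly as in the original argument, is to verify that $g$ is a right $\SF$-approximation. Given $F'\in\SF$ and a morphism $h\colon F'\lrt A$, I would compose with $\iota$ and use that $g_I$ is a right $\SF$-approximation to obtain $\ell\colon F'\lrt C$ with $g_I\ell=\iota h$. The universal property of the pullback applied to the compatible pair $(h,\ell)$ then produces $t\colon F'\lrt F$ with $gt=h$; equivalently, one first gets $\iota g t=\iota h$ and cancels the monomorphism $\iota$, which is the exact dual of cancelling the epimorphism $\pi$ at the end of the proof of Proposition~\ref{TauTriple}. Thus every such $h$ factors through $g$, so $g$ is a right $\SF$-approximation. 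I expect no essential difficulty beyond reversing every arrow correctly: both the kernel identification and the factorization follow immediately from the pullback's universal property.
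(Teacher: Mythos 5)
Your proof is correct and is exactly the argument the paper intends: the paper omits the proof, stating only that it is dual to Proposition~\ref{TauTriple}, and your dualization is faithful --- the injective embedding replaces the projective cover, the pullback replaces the pushout (with $\Ker g \cong \Ker g_I$ replacing the identification of cokernels), closure of the torsion-free class $\SF$ under subobjects replaces closure of the torsion class under quotients, and the approximation property follows from the pullback's universal property just as it followed from the pushout's. No gaps.
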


\begin{lemma}(Dual of  Lemma \ref{FacIntersection})\label{SubIntersection}
Let $\SA$ be an abelian category with enough injective objects. Let $(\ST, \SF, \SD)$ be a $\tau^-$-triple in   $\SA$. Then
$\SF=\Sub(\SF\cap\SD)$.
\end{lemma}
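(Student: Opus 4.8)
The plan is to mirror the proof of Lemma~\ref{FacIntersection} under the projective--injective duality, replacing $\Fac$ by $\Sub$, epimorphisms from projective objects by monomorphisms into injective objects, and left $\SF$-approximations by right $\SF$-approximations. As in the original, I would establish the two inclusions separately.

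For the inclusion $\Sub(\SF\cap\SD)\subseteq\SF$, I would use that $(\ST,\SF)$ is a torsion pair, so that $\SF$ is a torsion-free class and is therefore closed under subobjects and under finite direct sums. Since $\SF\cap\SD\subseteq\SF$, any subobject of a finite direct sum of objects of $\SF\cap\SD$ again lies in $\SF$, which gives the inclusion at once.

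For the reverse inclusion $\SF\subseteq\Sub(\SF\cap\SD)$, I would take an arbitrary $F\in\SF$ and use that $\SA$ has enough injective objects to fix a monomorphism $\iota\colon F\lrt I$ with $I$ injective. Applying Condition~$2$ of the $\tau^-$-cotorsion pair $(\SF,\SD)$ to $I$ produces an exact sequence $0\lrt D\lrt C\st{g}\lrt I$ with $C\in\SF\cap\SD$ and $g$ a right $\SF$-approximation of $I$. Since $C\in\SF$ and $F\in\SF$, the morphism $\iota$ factors as $\iota=gh$ for some $h\colon F\lrt C$; because $\iota$ is a monomorphism, so is $h$, which exhibits $F$ as a subobject of $C\in\SF\cap\SD$. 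Hence $F\in\Sub(\SF\cap\SD)$, as desired.

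The argument is essentially formal, and the only point that needs a little care is the verification that $h$ is a monomorphism, which I expect to be the main (minor) obstacle: it relies solely on $\iota=gh$ being a monomorphism, so no additional hypothesis is needed. Note that, in contrast with the projective case, here the factorisation comes from the universal property of a right approximation applied directly to $\iota$, so no epimorphism-cancellation step (the r\^ole played by $\pi$ in Lemma~\ref{FacIntersection}) is required, which makes the dual argument marginally shorter.
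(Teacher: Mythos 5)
Your proof is correct and is precisely the dualization of the paper's proof of Lemma~\ref{FacIntersection} (the paper omits the proof in Section~\ref{Sec:Dual}, stating the dual arguments are similar): closure of the torsion-free class under subobjects and finite direct sums gives one inclusion, and factoring a monomorphism $\iota\colon F\to I$ into an injective through the right $\SF$-approximation $g\colon C\to I$ with $C\in\SF\cap\SD$ gives the other. One harmless quibble: your closing remark that the dual avoids a cancellation step is off, since the original proof is exactly symmetric to yours --- there $g$ is deduced to be an epimorphism from $gf=\pi$ being one, just as you deduce $h$ is a monomorphism from $gh=\iota$ being one.
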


\begin{proposition}(Dual of  Propositions \ref{TauTiltingImpliesWeakCotorsion}, \ref{tau-torsion pair} and Lemma \ref{IJY})\label{TauInverseTiltingImpliesTauInverseCotorsion}
Let $\SA$ be an abelian category with enough injective objects. Let $\SU$ be a  support $\tau^-$-tilting subcategory of  $\SA$. Then
\[  (\Sub\SU, (\Sub\SU)^{\perp_1})\]
is a $\tau^-$-cotorsion pair and
\[({}^{\perp_0}{\SU}, \Sub\SU)\]
is a torsion pair in $\SA$. Moreover,
\[(\Sub\SU){}^{\perp_1}\cap \Sub\SU=\SU.\]
\end{proposition}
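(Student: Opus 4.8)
The plan is to dualize each of the three asserted facts using the symmetry between projective/injective, $\Fac$/$\Sub$, left/right approximation, and ${}^{\perp_1}$/$\perp_1$, invoking the already-proven ``primal'' statements. Since the excerpt explicitly declares (Section~\ref{Sec:Dual}) that $\SA$ has enough injective objects and that all proofs are omitted because they are formally dual, the cleanest route is to apply the opposite-category functor. First I would observe that if $\SA$ is abelian with enough injectives, then $\SA^{\op}$ is abelian with enough projectives, and under the duality $(-)^{\op}$ a support $\tau^-$-tilting subcategory $\SU$ of $\SA$ becomes a support $\tau$-tilting subcategory $\SU^{\op}$ of $\SA^{\op}$ (injectives go to projectives, the defining sequence $0\to U^0\to U^1\st{g}\lrt I$ with $g$ a right $\SU$-approximation becomes $P\st{f}\lrt T^0\to T^1\to 0$ with $f$ a left $\SU^{\op}$-approximation, and $\Ext^1_{\SA}(\Sub\SU,\SU)=0$ becomes $\Ext^1_{\SA^{\op}}(\SU^{\op},\Fac(\SU^{\op}))=0$, using $\Sub\SU=(\Fac(\SU^{\op}))^{\op}$).

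Once this dictionary is fixed, I would translate each conclusion. Applying Proposition~\ref{TauTiltingImpliesWeakCotorsion} to $\SU^{\op}$ in $\SA^{\op}$ yields that $({}^{\perp_1}\Fac(\SU^{\op}),\Fac(\SU^{\op}))$ is a $\tau$-cotorsion pair in $\SA^{\op}$; reading this back in $\SA$ gives exactly that $(\Sub\SU,(\Sub\SU)^{\perp_1})$ is a $\tau^-$-cotorsion pair, since the defining conditions of Definition~\ref{TauInverseCotorsion} are the opposite-category versions of Definition~\ref{WeakCotorsion}. Likewise, applying Proposition~\ref{tau-torsion pair} to $\SU^{\op}$ produces a torsion pair $(\Fac(\SU^{\op}),(\SU^{\op})^{\perp_0})$ in $\SA^{\op}$; dualizing a torsion pair $(\ST,\SF)$ in $\SA^{\op}$ gives a torsion pair $(\SF^{\op},\ST^{\op})$ in $\SA$, so here we obtain the torsion pair $(\,{}^{\perp_0}\SU,\Sub\SU)$ in $\SA$, matching the claim. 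Finally, Lemma~\ref{IJY} gives ${}^{\perp_1}\Fac(\SU^{\op})\cap\Fac(\SU^{\op})=\SU^{\op}$ in $\SA^{\op}$, which translates to $(\Sub\SU)^{\perp_1}\cap\Sub\SU=\SU$ in $\SA$.

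The main obstacle I anticipate is not the logical content but bookkeeping: making sure the duality sends each categorical notion to precisely the one appearing in the dual definitions, so that the three targeted statements really are the verbatim opposites of the three source statements. In particular I would double-check that a right $\SU$-approximation dualizes to a left $\SU^{\op}$-approximation, that covariantly finite becomes contravariantly finite, and that ${}^{\perp_1}(\Fac(\SU^{\op}))$ corresponds to $(\Sub\SU)^{\perp_1}$ rather than ${}^{\perp_1}(\Sub\SU)$ (the orientation of the $\Ext$ flips, which is exactly why Definition~\ref{TauInverseCotorsion} uses $\SD=\SC^{\perp_1}$ instead of $\SC={}^{\perp_1}\SD$). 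Alternatively, if one prefers to avoid the opposite-category formalism entirely, I would instead copy the three proofs line by line, replacing every epimorphism by a monomorphism, every projective cover/presentation by an injective envelope/copresentation, and every pushout by the corresponding pullback; this is the route the authors signal by saying ``the proofs are similar.'' Either way the argument is purely formal, requiring no new idea beyond the already-established primal results.
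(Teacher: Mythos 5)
Your proposal is correct and matches the paper exactly: the authors give no written proof for this proposition, stating at the start of Section~\ref{Sec:Dual} that all results there are formal duals whose proofs are ``similar,'' which is precisely your opposite-category argument (or equivalently the line-by-line dualization you offer as an alternative). Your dictionary checks are the right ones and all hold — in particular that $\Sub\SU=(\Fac(\SU^{\op}))^{\op}$, that a $\tau$-cotorsion pair $(\SC,\SD)$ in $\SA^{\op}$ corresponds to the $\tau^-$-cotorsion pair $(\SD^{\op},\SC^{\op})$ in $\SA$ with the orientation of $\Ext^1$ flipped as in Definition~\ref{TauInverseCotorsion}, and that a torsion pair dualizes with its components swapped — so applying Propositions~\ref{TauTiltingImpliesWeakCotorsion}, \ref{tau-torsion pair} and Lemma~\ref{IJY} to $\SU^{\op}$ yields the three claims verbatim.
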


\begin{corollary}(Dual of Corollary \ref{FunctoriallyFiniteTorsion})\label{FunctoriallyFiniteTorsionFree}
Let $\SA$ be an abelian category with enough injectives. Let $\SU$ be a support $\tau^-$-tilting subcategory of $\SA$. Then $\Sub\SU$ is a functorially finite torsion free class of $\SA$.
\end{corollary}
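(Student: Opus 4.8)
The plan is to dualize the proof of Corollary~\ref{FunctoriallyFiniteTorsion} verbatim: first assemble a $\tau^-$-triple around $\SU$, and then read off the two finiteness conditions from it. First I would invoke Proposition~\ref{TauInverseTiltingImpliesTauInverseCotorsion}, which tells us simultaneously that $(\Sub\SU, (\Sub\SU)^{\perp_1})$ is a $\tau^-$-cotorsion pair and that $({}^{\perp_0}\SU, \Sub\SU)$ is a torsion pair. By the very definition of a $\tau^-$-torsion cotorsion triple these two facts combine to show that
\[({}^{\perp_0}\SU,\ \Sub\SU,\ (\Sub\SU)^{\perp_1})\]
is a $\tau^-$-triple, with $\Sub\SU$ playing the role of the torsion-free term $\SF$.

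Having this triple, I would obtain covariant finiteness of $\Sub\SU$ for free: being the torsion-free class of the torsion pair $({}^{\perp_0}\SU, \Sub\SU)$, it is in particular a torsion free class, and the canonical short exact sequence $0 \lrt tA \lrt A \lrt fA \lrt 0$ of that torsion pair provides, for every object $A$, a left $\Sub\SU$-approximation $A \lrt fA$ (the composite $tA \lrt A \lrt Y$ into any $Y \in \Sub\SU$ vanishes since $\Hom_{\SA}({}^{\perp_0}\SU, \Sub\SU)=0$). For contravariant finiteness I would appeal to Proposition~\ref{TauInverseTriple} applied to the triple above: it furnishes, for every $A \in \SA$, an exact sequence $0 \lrt D \lrt F \st{g}\lrt A$ with $F \in \Sub\SU$ and $g$ a right $\Sub\SU$-approximation. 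Together these two give that $\Sub\SU$ is functorially finite, and since it is already a torsion free class, the corollary follows. In short, the statement is exactly the dual of Corollary~\ref{Cor-ff}, so one may equally well simply cite the dual of that corollary.

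The argument is essentially bookkeeping, so there is no genuine obstacle; the only point requiring care is the dualization itself. Concretely, I would want to confirm that the implication \emph{torsion class is contravariantly finite} dualizes correctly to \emph{torsion free class is covariantly finite} (via the canonical sequence of the torsion pair, as above), and that the right $\Sub\SU$-approximations produced by Proposition~\ref{TauInverseTriple} are indeed the ingredient realizing contravariant finiteness, exactly mirroring the way the left $\ST$-approximations of Proposition~\ref{TauTriple} were used in the proof of Corollary~\ref{Cor-ff}.
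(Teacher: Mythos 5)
Your proposal is correct and follows exactly the route the paper intends: the paper's proof of Corollary~\ref{FunctoriallyFiniteTorsion} assembles the $\tau$-triple from Propositions~\ref{TauTiltingImpliesWeakCotorsion} and \ref{tau-torsion pair} and then cites Corollary~\ref{Cor-ff}, and your dualization (Proposition~\ref{TauInverseTiltingImpliesTauInverseCotorsion} yielding the $\tau^-$-triple $({}^{\perp_0}\SU, \Sub\SU, (\Sub\SU)^{\perp_1})$, covariant finiteness from the canonical sequence of the torsion pair, contravariant finiteness from Proposition~\ref{TauInverseTriple}) is precisely the dual of that argument, which is all the paper's Section~\ref{Sec:Dual} leaves implicit. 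Your explicit check that $A \lrt fA$ is a left $\Sub\SU$-approximation correctly fills in the one dualization detail the paper takes for granted.
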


\begin{lemma}(Dual of Lemma \ref{WeakCotorsionImpliesTauTilting})\label{dual of 5.6}
Let $\SA$ be an abelian category with enough injective objects. Let $(\ST, \SF, \SD)$ be a $\tau^-$-triple in  $\SA$. Then $\SF\cap\SD$ is a support $\tau^-$-tilting subcategory of $\SA$.
\end{lemma}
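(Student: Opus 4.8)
The plan is to verify, for the subcategory $\SF\cap\SD$, each of the three requirements in Definition~\ref{DefTauInverseTilting}, dualizing the argument of Lemma~\ref{WeakCotorsionImpliesTauTilting}. The covariant finiteness of $\SF\cap\SD$ is immediate, since it is exactly Condition~$3$ in the definition of the $\tau^-$-cotorsion pair $(\SF, \SD)$. Thus only the $\Ext$-vanishing condition and the right-approximation sequence for injectives need to be checked.

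For the vanishing, I would first record the structural fact that, because $(\ST, \SF)$ is a torsion pair, $\SF$ is a torsion-free class and is therefore closed under subobjects and finite direct sums; hence $\Sub(\SF\cap\SD)\subseteq\Sub\SF=\SF$. Combining this inclusion with $\SF\cap\SD\subseteq\SD$ and the identity $\SD=\SF^{\perp_1}$ from Condition~$1$ of the $\tau^-$-cotorsion pair, I obtain
\[\Ext^1_\SA(\Sub(\SF\cap\SD), \SF\cap\SD)\subseteq\Ext^1_\SA(\SF, \SD)=0,\]
which is Condition~$1$ of Definition~\ref{DefTauInverseTilting}.

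For the approximation sequence, I would invoke Condition~$2$ of the $\tau^-$-cotorsion pair: for every injective object $I$ there is an exact sequence $0\lrt D\lrt C\st{g}\lrt I$ with $C\in\SF\cap\SD$, $D\in\SD$, and $g$ a right $\SF$-approximation. The key step is to upgrade $D$ into the intersection: since $D=\Ker g$ is a subobject of $C\in\SF$ and $\SF$ is closed under subobjects, we get $D\in\SF$, hence $D\in\SF\cap\SD$. Finally, because $\SF\cap\SD\subseteq\SF$, every morphism from an object of $\SF\cap\SD$ into $I$ already factors through $g$, so the right $\SF$-approximation $g$ is automatically a right $\SF\cap\SD$-approximation; taking $U^0=D$ and $U^1=C$ then furnishes Condition~$2$ of Definition~\ref{DefTauInverseTilting}.

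I do not expect a genuine obstacle here, as the statement is the formal dual of Lemma~\ref{WeakCotorsionImpliesTauTilting}. The only point that requires care is the verification that the kernel $D$ lands in $\SF$, so that both outer terms of the approximation sequence lie in $\SF\cap\SD$ rather than merely in $\SD$; this is exactly where closure of the torsion-free class $\SF$ under subobjects is used, dual to the use of closure of the torsion class under quotients in the original proof.
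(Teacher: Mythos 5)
Your proof is correct and follows essentially the paper's approach: the paper omits the proof of Lemma~\ref{dual of 5.6} as the formal dual of Lemma~\ref{WeakCotorsionImpliesTauTilting}, and your argument is precisely that dualization — covariant finiteness from Condition~$3$, the chain $\Ext^1_\SA(\Sub(\SF\cap\SD),\SF\cap\SD)\subseteq\Ext^1_\SA(\SF,\SD)=0$, and promoting the approximation to one with both terms in $\SF\cap\SD$. Your upgrade of $D$ into $\SF$ via closure of the torsion-free class under subobjects is exactly dual to the (implicit) step in the original proof where the cokernel term lands in the torsion class because it is a quotient of an object of $\ST$.
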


Let $\SU$ be an additively closed full subcategory $\SA$. By \cite[Subsection 2.3]{BBOS}, $\SU$ is called a cotilting subcategory if it satisfies the following conditions.
\begin{itemize}
\item[$(i)$] $\SU$ is a covariantly finite subcategory of $\SA$.
\item[$(ii)$] $\Ext^1_\SA(U_1, U_2)=0$, for all $U_1, U_2 \in \SU$.
\item[$(iii)$] Every object $U \in \SU$ has injective dimension at most $1$.
\item[$(iv)$] For every injective object $I$ in $\SA$, there exists a short exact sequence
\[0 \lrt U_1 {\lrt} U_0 \lrt I \lrt 0,\]
with $U^i \in \SU$.
\end{itemize}
If $\SU$ only satisfies the conditions $(ii)-(iv)$, it is called a weak cotilting subcategory of $\SA$.

\begin{theorem}(Dual of  Theorem \ref{Bijection})\label{Bijection-Dual}
Let $\SA$ be an abelian category with enough injective objects. Then there is a bijection
\[\lbrace \mbox{support $\tau^-$-tilting subcategories}\rbrace ~~~~\longleftrightarrow~~~~ \lbrace \mbox{$\tau^-$-torsion cotorsion triples}\rbrace\]
 $\  \  \  \  \  \ \ \ \ \ \ \ \ \ \ \ \ \ \ \ \ \ \ \ \ \  \ \ \ \ \ \ \ \ \ \ \ \ \ \ \ \ \  \ \ \ \ \ \ \ \  \ \ \ \SU~~~~\longrightarrow~~~~~({}^{\perp_0}{\SU}, \Sub\SU, (\Sub\SU)^{\perp_1})$
\[\  \ \ \ \ \ \ \  \SF\cap\SD~~\longleftarrow ~~~(\ST, \SF, \SD).\]
 This bijection restricts to a bijection between the collection of all cotilting subcategories of $\SA$ and the collections of all  torsion cotorsion  triples in $\SA$.
\end{theorem}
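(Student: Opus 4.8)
The plan is to deduce the statement from Theorem~\ref{Bijection} by passing to the opposite category, rather than re-running the entire argument. If $\SA$ has enough injective objects, then $\SA^{\op}$ has enough projective objects, and forming the opposite category interchanges exactly the ingredients appearing in the two theorems: injective objects become projective, subobjects become quotients (so $\Sub_{\SA}\SU$ becomes $\Fac_{\SA^{\op}}\SU$), right approximations become left approximations, covariantly finite subcategories become contravariantly finite ones, $\Ext^1_{\SA}(X,Y)$ becomes $\Ext^1_{\SA^{\op}}(Y,X)$, and $\SX^{\perp_n}$ is exchanged with ${}^{\perp_n}\SX$. First I would record this dictionary precisely, so that each subsequent translation is mechanical.

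Next I would check that the dictionary carries every definition to its counterpart. Comparing Definition~\ref{DefTauInverseTilting} with Definition~\ref{DefTauTilting}, a subcategory $\SU$ is a (weak) support $\tau^{-}$-tilting subcategory of $\SA$ if and only if it is a (weak) support $\tau$-tilting subcategory of $\SA^{\op}$; likewise, comparing Definition~\ref{TauInverseCotorsion} with Definition~\ref{WeakCotorsion}, a pair $(\SF,\SD)$ is a $\tau^{-}$-cotorsion pair in $\SA$ precisely when $(\SD,\SF)$ is a $\tau$-cotorsion pair in $\SA^{\op}$. Since a torsion pair $(\ST,\SF)$ in $\SA$ is exactly a torsion pair $(\SF,\ST)$ in $\SA^{\op}$, a triple $(\ST,\SF,\SD)$ is a $\tau^{-}$-torsion cotorsion triple in $\SA$ if and only if $(\SD,\SF,\ST)$ is a $\tau$-cotorsion torsion triple in $\SA^{\op}$.

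With these identifications in hand, the bijection of Theorem~\ref{Bijection} applied in $\SA^{\op}$ yields the desired bijection in $\SA$. Concretely, $\Psi$ sends the $\tau$-triple $(\SD,\SF,\ST)$ of $\SA^{\op}$ to $\SD\cap\SF$, which reads as $\SF\cap\SD$ in $\SA$; and $\Phi$ sends the support $\tau$-tilting subcategory $\SU$ of $\SA^{\op}$ to $({}^{\perp_1}(\Fac\SU),\Fac\SU,\SU^{\perp_0})$, whose entries translate to $(\Sub\SU)^{\perp_1}$, $\Sub\SU$, and ${}^{\perp_0}\SU$ in $\SA$. After reversing the order of the triple to match the $\tau^{-}$-convention, this becomes $({}^{\perp_0}\SU,\Sub\SU,(\Sub\SU)^{\perp_1})$, exactly as claimed. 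The restriction statement follows because, under the same duality, cotilting subcategories of $\SA$ correspond to tilting subcategories of $\SA^{\op}$ (injective dimension passing to projective dimension) and torsion cotorsion triples correspond to cotorsion torsion triples, so the last assertion of Theorem~\ref{Bijection} transports verbatim.

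I expect the only genuine obstacle to be bookkeeping: one must be scrupulous about the order-reversal in triples and about the simultaneous swap of $\Hom$/$\Ext$ and of the two perp-operations, since a single misplaced side turns a true statement into a false one. If one preferred to avoid the opposite-category formalism, the same result follows by mirroring the proof of Theorem~\ref{Bijection} line by line, invoking the already-stated dual lemmas: Proposition~\ref{TauInverseTiltingImpliesTauInverseCotorsion} together with Lemma~\ref{dual of 5.6} give well-definedness of the two maps, the identity $(\Sub\SU)^{\perp_1}\cap\Sub\SU=\SU$ from Proposition~\ref{TauInverseTiltingImpliesTauInverseCotorsion} gives one composite $\cong 1$, and Lemma~\ref{SubIntersection}, supplying $\SF=\Sub(\SF\cap\SD)$ and hence $\SD=(\Sub(\SF\cap\SD))^{\perp_1}$ and $\ST={}^{\perp_0}(\SF\cap\SD)$, gives the other.
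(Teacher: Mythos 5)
Your proposal is correct and coincides with the paper's intended argument: the paper gives no separate proof for Theorem~\ref{Bijection-Dual}, stating at the start of Section~\ref{Sec:Dual} that the proofs are dual to those already given, which is exactly the opposite-category translation of Theorem~\ref{Bijection} (equivalently, the line-by-line mirroring via Propositions~\ref{TauInverseTiltingImpliesTauInverseCotorsion}, Lemma~\ref{dual of 5.6} and Lemma~\ref{SubIntersection}) that you carry out. Your careful bookkeeping of the order-reversal in triples and the swap of $\SX^{\perp_n}$ with ${}^{\perp_n}\SX$ is accurate throughout, so nothing is missing.
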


\begin{proposition}(Duall of Proposition \ref{EquivalentDefinitions})\label{DualEquivalentDefinitions}
Let $\Lambda$ be an artin algebra.
Then every support $\tau^{-}$-tilting subcategory $\ST$ of $\mmod \La$ is of the form $\add(T)$, where $T$ is a support $\tau^{-}$-tilting module in $\mmod \La$.
\end{proposition}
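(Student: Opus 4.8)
The plan is to dualize the proof of Proposition~\ref{EquivalentDefinitions} essentially verbatim, replacing $\Fac$ by $\Sub$ and factor objects by subobjects throughout. Since $\SA=\mmod\La$ has enough injective objects with $\Inj(\mmod\La)=\add(D\La)$, the hypothesis needed for the dual of Proposition~\ref{addgentautilt} is met, so the first step is to invoke that dual statement: for the given support $\tau^-$-tilting subcategory $\ST$ there is a support $\tau^-$-tilting object $T\in\ST$ with $\Sub(T)=\Sub(\ST)$. As in the original argument, the construction places $T$ inside $\ST$, and since $\ST$ is additively closed this immediately gives the inclusion $\add(T)\subseteq\ST$.

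For the reverse inclusion I would take an arbitrary $X\in\ST$. Then $X\in\Sub(T)$, and by the dual of Proposition~2.5 of \cite{Z} there is a short exact sequence
\[0 \lrt X \lrt T' \lrt K' \lrt 0\]
with $T'\in\add(T)$ and $K'\in\Sub(T)=\Sub(\ST)$. The crucial point is that this sequence splits. To see this I would use Proposition~\ref{TauInverseTiltingImpliesTauInverseCotorsion}, the dual of Lemma~\ref{IJY}, which yields $\ST=(\Sub\ST)^{\perp_1}\cap\Sub(\ST)$; in particular $X\in(\Sub\ST)^{\perp_1}$, so $\Ext^1_{\SA}(K',X)=0$ because $K'\in\Sub(\ST)$. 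Hence $X$ is a direct summand of $T'\in\add(T)$, whence $X\in\add(T)$. Combining the two inclusions gives $\ST=\add(T)$, with $T$ a support $\tau^-$-tilting module, as required.

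The only genuine work lies in making sure the two dual ingredients are correctly in place. The dual of Proposition~\ref{addgentautilt} must produce a \emph{support $\tau^-$-tilting object} whose $\Sub$-category agrees with $\Sub(\ST)$ and which lies in $\ST$; this is obtained by running the pullback (rather than pushout) constructions in the proof of Proposition~\ref{addgentautilt} in the opposite category, exactly in the spirit of Section~\ref{Sec:Dual}. Likewise one needs the $\Sub$-analogue of the approximation sequence of \cite{Z}. I expect the main obstacle to be purely bookkeeping: tracking that passing to the opposite category sends enough-projectives to enough-injectives and support $\tau$-tilting to support $\tau^-$-tilting objects, so that the cited results apply on the nose, rather than any new conceptual difficulty.
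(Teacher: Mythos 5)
Your proposal is correct and coincides with the paper's intended argument: the paper gives no separate proof for Proposition~\ref{DualEquivalentDefinitions}, declaring in Section~\ref{Sec:Dual} that all dual results follow by dualizing the originals, and your proof is precisely the dualization of the proof of Proposition~\ref{EquivalentDefinitions} (dual of Proposition~\ref{addgentautilt} to produce $T$ with $\Sub(T)=\Sub(\ST)$, the $\Sub$-analogue of \cite[Proposition~2.5]{Z}, and splitting of $0 \lrt X \lrt T' \lrt K' \lrt 0$ via $\ST=(\Sub\ST)^{\perp_1}\cap\Sub\ST$ from Proposition~\ref{TauInverseTiltingImpliesTauInverseCotorsion}). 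Your bookkeeping concern is also easily dispatched in this setting, since the standard artin algebra duality $D\colon \mmod\La \lrt \mmod\La^{\op}$ exchanges $\Fac$ with $\Sub$ and support $\tau$-tilting with support $\tau^{-}$-tilting data, so every dual ingredient holds on the nose.
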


\section{$\tau$-$\tau^-$-quadruples}\label{Sec:Quadruples}

In this section, by combining the notions of $\tau$-triples and $\tau^{-}$-triples we are able to relate certain support $\tau$-tilting subcategories to certain support $\tau^{-}$-tilting subcategories of an abelian category with enough projective and enough injective objects. We show that this relation specializes to the dual of the dagger map introduced in \cite[Theorem~2.15]{AIR}, when we restrict $\SA$ to be $\mmod \La,$ the category of finitely presented modules over an artin algebra $\La$.

\begin{definition}\label{quadruple}
Let $\SA$ be an abelian category with enough projective and enough injective objects.
A quadruple $(\SC, \ST, \SF, \SD)$ of full additively closed subcategories of $\SA$ is called a $\tau$-$\tau^-$-quadruple if $(\SC, \ST, \SF)$ is a $\tau$-triple and $(\ST, \SF, \SD)$ is a $\tau^-$-triple.
\end{definition}

\begin{remark}\label{dagger map}
Let $(\SC, \ST, \SF, \SD)$ be a $\tau$-$\tau^-$-quadruple.
Then by Lemmas \ref{WeakCotorsionImpliesTauTilting} and \ref{dual of 5.6}, the map given by
\[\ddag: \SC \cap \ST \longmapsto \SF \cap \SD,\]
associates a $\tau$-tilting subcategory to a $\tau^-$-tilting subcategory.
\end{remark}

Let $\SA=\mmod\La$, where $\La$ is an artin algebra.
In \cite[Theorem  2.15]{AIR}, using the dual of a special map, which is called the dagger map and is denoted by $( - )^{\dagger}$, the authors constructed a bijection between the set of isomorphism classes of all support $\tau$-tilting $\La$-modules, denoted by ${\rm s}\tau\mbox{-}{\rm tilt}\La$ and the set of all isomorphism classes of support $\tau^{-}$-tilting $\La$-modules, denoted by ${\rm s}\tau^{-}\mbox{-}{\rm tilt}\La$.
This bijection is given by
$$(M,P) \mapsto (\tau M \oplus \nu P, \nu M_{\rm pr}),$$
where $\nu$ is the Nakayama functor and $M_{\rm pr}$ denotes the projective summand of $M$.
The next proposition shows that the map $\ddag$ defined in Remark \ref{dagger map} can be considered as a generalization of this map.
To this end we need the following easy lemma.

\begin{lemma}\label{FFTP}
Let $(\SC, \ST, \SF, \SD)$ be a $\tau$-$\tau^-$-quadruple.
Then both $\ST$ and $\SF$ are functorially finite subcategories of $\SA$. In this case, we say that $(\ST, \SF)$ is a functorially finite torsion pair of $\SA$.
\end{lemma}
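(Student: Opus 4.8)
The plan is to treat $\ST$ and $\SF$ separately, exploiting the fact that a $\tau$-$\tau^-$-quadruple packages two triples sharing the torsion pair $(\ST, \SF)$: by Definition~\ref{quadruple}, $(\SC, \ST, \SF)$ is a $\tau$-triple and $(\ST, \SF, \SD)$ is a $\tau^-$-triple. The functorial finiteness of $\ST$ will come from the first structure and that of $\SF$ from the second.

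For $\ST$, I would simply invoke Corollary~\ref{Cor-ff}, which asserts that the middle term of any $\tau$-triple is a functorially finite torsion class; applied to $(\SC, \ST, \SF)$ it gives at once that $\ST$ is functorially finite. Unpacking it, $\ST$ is contravariantly finite because it is a torsion class---the canonical sequence $0 \lrt tA \lrt A \lrt fA \lrt 0$ exhibits $tA \lrt A$ as a right $\ST$-approximation, using $\Hom_\SA(\ST,\SF)=0$---while its covariant finiteness is supplied by the left $\ST$-approximations produced in Proposition~\ref{TauTriple}.

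For $\SF$ I would run the dual argument. Since $(\ST, \SF)$ is a torsion pair, $\SF$ is a torsion-free class, and the same canonical sequence $0 \lrt tA \lrt A \lrt fA \lrt 0$ now exhibits $A \lrt fA$ as a left $\SF$-approximation (the composite $tA \lrt A \lrt Y$ vanishes for any $Y \in \SF$), so $\SF$ is covariantly finite. For contravariant finiteness I would apply Proposition~\ref{TauInverseTriple} to the $\tau^-$-triple $(\ST, \SF, \SD)$: for every $A$ it furnishes an exact sequence $0 \lrt D \lrt F \st{g}\lrt A$ with $F \in \SF$ and $g$ a right $\SF$-approximation, which is precisely contravariant finiteness. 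Together these give that $\SF$ is functorially finite. This step is the dual of Corollary~\ref{Cor-ff}, which is not stated verbatim among the dual results but is immediate from Proposition~\ref{TauInverseTriple}.

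Since neither half requires more than citing the appropriate (dual) results, I do not anticipate a genuine obstacle; the only point demanding care is matching the variance of the approximations correctly---namely that the torsion class $\ST$ inherits contravariant finiteness for free and the torsion-free class $\SF$ inherits covariant finiteness for free, with the two $\tau$- and $\tau^-$-cotorsion approximation sequences supplying the respective missing directions. Once both classes are shown functorially finite, the closing assertion is pure terminology: $(\ST,\SF)$ is a torsion pair whose two members are functorially finite, i.e.\ a functorially finite torsion pair.
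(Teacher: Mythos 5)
Your proposal is correct and follows essentially the same route as the paper: the paper's proof also gets contravariant finiteness of $\ST$ from its being a torsion class, covariant finiteness from Proposition~\ref{TauTriple}, and then handles $\SF$ ``by similar argument,'' which is exactly your dual reasoning via the torsion-free structure and Proposition~\ref{TauInverseTriple}. Your version merely makes explicit (via Corollary~\ref{Cor-ff} and the canonical torsion sequence) the steps the paper leaves implicit.
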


\begin{proof}
Since $\ST$ is a torsion class, it is always contravariantly finite.
Moreover, Proposition \ref{TauTriple} implies that $\ST$ is a covariantly finite.
Hence it is functorially finite. By similar argument, $\SF$ is functorially finite.
Thus $(\ST, \SF)$ is a functorially finite torsion pair.
\end{proof}

Recall that two modules $M$ and $N$ are additively equivalent if $\add(M)=\add(N)$.

\begin{proposition}\label{daggermap}
Let $\SA=\mmod\La$.
Then the map associating $\SC \cap \ST$ to $\SF \cap \SD$ in every quadruple $(\SC, \ST, \SF, \SD)$, is exactly the dual of dagger map defined in \cite[Theorem 2.15]{AIR}, up to additive equivalences.
\end{proposition}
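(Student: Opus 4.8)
The plan is to reduce the assertion to a statement about modules and then to match the resulting correspondence with the explicit formula of \cite[Theorem~2.15]{AIR} by recognising the torsion pair that sits in the middle of the quadruple.

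First I would fix a $\tau$-$\tau^-$-quadruple $(\SC, \ST, \SF, \SD)$ in $\SA=\mmod\La$. By Lemma~\ref{WeakCotorsionImpliesTauTilting} the subcategory $\SC\cap\ST$ is a support $\tau$-tilting subcategory, so Proposition~\ref{EquivalentDefinitions} provides a support $\tau$-tilting module $M$, with support $\tau$-tilting pair $(M,P)$, such that $\SC\cap\ST=\add(M)$. Dually, Lemma~\ref{dual of 5.6} and Proposition~\ref{DualEquivalentDefinitions} give a support $\tau^-$-tilting module $N$ with $\SF\cap\SD=\add(N)$. With this notation the statement to prove is exactly that $\add(N)=\add(\tau M\oplus\nu P)$, i.e. that the map $\ddag$ agrees with $(M,P)\mapsto(\tau M\oplus\nu P,\nu M_{\rm pr})$ up to additive equivalence.

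Next I would identify the two outer subcategories and, crucially, pin down the middle torsion pair. By Theorem~\ref{Bijection} together with Lemma~\ref{FacIntersection} one has $\ST=\Fac(\SC\cap\ST)=\Fac(M)=\Gen(M)$, so the torsion pair $(\ST,\SF)$ of the quadruple is precisely the functorially finite torsion pair $(\Gen M, M^{\perp_0})$ attached to $M$ in \cite{AIR}. Dually, Theorem~\ref{Bijection-Dual} and Lemma~\ref{SubIntersection} yield $\SF=\Sub(\SF\cap\SD)=\Sub(N)=\Cogen(N)$ and $\SD=(\Sub(\SF\cap\SD))^{\perp_1}=\SF^{\perp_1}$, whence $\SF\cap\SD=\SF\cap\SF^{\perp_1}$ depends only on the torsion-free class $\SF$. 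In particular $N$ is the support $\tau^-$-tilting module cogenerating $\SF$, and $\add(N)$ is uniquely determined by $\SF$.

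Finally I would compare with \cite[Theorem~2.15]{AIR}. There the dual dagger bijection ${\rm s}\tau\mbox{-}{\rm tilt}\La\to{\rm s}\tau^-\mbox{-}{\rm tilt}\La$ is realised by sending the pair $(M,P)$ to the support $\tau^-$-tilting pair whose torsion-free class is the torsion-free part $M^{\perp_0}$ of the torsion pair $(\Gen M, M^{\perp_0})$, and the cogenerating module is computed there to be $\tau M\oplus\nu P$. Since both $N$ and $\tau M\oplus\nu P$ are support $\tau^-$-tilting modules cogenerating the same torsion-free class $\SF$, the uniqueness established in the previous step forces $\add(N)=\add(\tau M\oplus\nu P)$, which is the claim. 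The main obstacle is the middle step: one must verify that the torsion pair appearing in the quadruple is genuinely the AIR torsion pair of $M$ on the torsion side and of $N$ on the torsion-free side, and that the assignment ``functorially finite torsion-free class $\mapsto$ support $\tau^-$-tilting module'' is well defined up to $\add$; both follow from the bijections of Theorems~\ref{Bijection} and \ref{Bijection-Dual} combined with the computation in \cite{AIR}, but aligning the conventions—projective versus support summands, and the intervention of the Nakayama functor $\nu$—is where the care is required.
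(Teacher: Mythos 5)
Your proof is correct and takes essentially the same route as the paper: both reduce to modules via Propositions~\ref{EquivalentDefinitions} and \ref{DualEquivalentDefinitions}, identify the middle torsion pair of the quadruple with the pair $(\Gen M, M^{\perp_0})$ of \cite{AIR}, and rest on the key identity $\Sub(\tau M\oplus\nu P)=M^{\perp_0}$ from \cite[Proposition~2.16.b]{AIR}. Your final uniqueness step, recovering $\SF\cap\SD=\SF\cap\SF^{\perp_1}=\add(\tau M\oplus\nu P)$ via the intersection formula of Proposition~\ref{TauInverseTiltingImpliesTauInverseCotorsion}, merely spells out in detail the converse direction that the paper's proof states tersely, so the two arguments coincide in substance.
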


\begin{proof}
We show that there exists a bijection between the collection of all support $\tau$-tilting $\La$-modules and the collection of all $\tau$-$\tau^-$-quadruples in $\mmod \La$.
Let $M \in\mmod\La$ be a support $\tau$-tilting module, that is, $(M,P)$ is a $\tau$-tilting pair, for some projective module $P$.
Then $\ST=\add(M)$ is a $\tau$-tilting subcategory of $\mmod\La$ and so by Theorem \ref{Bijection},
\[({}^{\perp_1}\Fac(M), \Fac(M), M^{\perp_0})\]
is a $\tau$-triple.
On the other hand, in view of Theorem 2.15 of \cite{AIR}, $\tau M\oplus \nu P$ is a support $\tau^-$-tilting module.
More precisely, $(\tau M\oplus \nu P, \nu M_{{\rm pr}})$ is a $\tau^-$-tilting pair, where $M_{{\rm pr}}$ denotes the projective summand of $M$.
Hence  $\SU=\add(\tau M\oplus \nu P)$ is a $\tau^-$-tilting subcategory and so by Theorem \ref{Bijection-Dual},
\[({}^{\perp_0}(\tau M\oplus \nu P), \Sub(\tau M\oplus \nu P), (\Sub(\tau M\oplus \nu P))^{{\perp}_1})\]
is a $\tau^-$-torsion cotorsion triple.
Now ${}^{\perp_0}(\tau M\oplus \nu P)=\Fac(M)$ and $\Sub(\tau M\oplus \nu P)=M^{\perp_0}$ by \cite[Proposition 2.16.b]{AIR}.
In fact we get the following $\tau$-$\tau^{-}-$quadruple
\[({}^{\perp_1}\Fac(M), \Fac(M), \Sub(\tau M\oplus \nu P), (\Sub(\tau M\oplus \nu P))^{{\perp}_1}).\]
Conversely, assume that $(\SC, \ST, \SF, \SD)$ is a $\tau$-$\tau^-$-quadruple.
Since $\ST=\Fac(T)$ for some support $\tau$-tilting module $T$, Proposition~\ref{EquivalentDefinitions} implies that $\SC\cap\ST = \add (T)$ and $\SF \cap \SD = \add(\tau M\oplus \nu P)$. Hence the result follows.
\end{proof}

Let $\SA = \mmod \La$.
Then \cite[Theorem 1]{Sm} states that if $(\ST, \SF)$ is a torsion pair in $\SA$ then $\ST$ is functorially finite if and only if $\SF$ is functorially finite.
As we saw in the proof of the above theorem, we have that every $\tau$-triple $(\SC, \ST, \SF)$ can be completed to a $\tau$-$\tau^-$-quadruple.
In particular, this implies that the map $\ddag$ can be defined as a map from the set of all $\tau$-triples to the set of all $\tau^{-}$-triples.
This is not true in an arbitrary abelian category with enough projective objects, as is shown by the following example.

\begin{example}
Let $\SA=\RepR$.
Set
\begin{align*}
\ST=&\add(\lbrace  k_{[0, y)} \vert \ 0 < y\leq \infty\rbrace \cup \lbrace  k_{[x, y)} \vert \ 1 \leq x< y\leq \infty\rbrace),\\
\SF=&\add(\lbrace  k_{[x, y)} \vert \ 0< x <y \leq 1\rbrace).
\end{align*}
Then  by \cite[Example 2.5]{BBOS}, $(\ST, \SF)$ is a torsion pair.
Obviously $\ST$ is a functorially finite subcategory of $\SA$.
We show that $\SF$ is not  a contravariantly finite subcategory of $\SA$.
To see this, note that objects such as $K_{[0, b)}$, where $b<1$, have not a right $\SF$-approximation.
In fact, a right $\SF$-approximation of $K_{[0, b)}$ should be of the form $\theta: k_{[a, b)}\lrt k_{[0, b)}$, for some $0< a<b< 1$.
But non-zero morphisms such as $\eta: k_{[x, y)}\lrt k_{[0, b)}$, with $0\leq x<a$ and $b\leq y$, can not factor through $\theta$.
\end{example}

\section{Connection to silting and cosilting modules}\label{Sec:Connection}

This section is divided to two subsections and is devoted to the study the connections between support $\tau$- and support $\tau^{-}$-tilting subcategories and silting and cosilting theories in $\Mod R$, where $R$ is an associative unitary ring.  Based on these theories we are able to characterize all support  $\tau$- and  support $\tau^-$-tilting subcategories of $\Mod R$.

\subsection{Silting modules and support $\tau$-tilting subcategories}
Our aim in this subsection is to characterize all support $\tau$-tilting subcategories of $\Mod R$. We do this by providing a bijection between the equivalence classes of all support $\tau$-tilting subcategories of $\Mod R$ and the collection of all equivalent classes of the certain $R$-modules, the so-called finendo quasitilting $R$-modules. It is known that all silting modules are finendo quasitilting. As a result, it will be shown that $\Add(S)$ is a $\tau$-tilting subcategory of $\Mod R$, where $S$ is a silting $R$-module.

For a module $M$ in $\Mod R$, let $\Add(M)$ denote the class of all modules isomorphic to a direct summand of an arbitrary direct sum of copies of $M$.
We also let $\Gen(M)$ to be the subcategory of $\Mod R$ consisting of all $M$-generated modules, i.e. all modules isomorphic to an epimorphic images of modules in $\Add(M)$.
and $\Pres(M)$ to be the subcategory of $\Mod R$ consisting of all $M$-presented modules, i.e. all modules that admit an $\Add(M)$-presentation.
Recall that an $\Add(M)$-presentation of an $R$-module $X$ is an exact sequence
\[M_1 \lrt  M_0 \lrt X \lrt 0,\]
with $M_1$ and $M_0$ in $\Add(M)$.

Let $\sigma$ be a morphism in $\Prj(R) $. Let $\SD_{\sigma}$ be the class of all modules $M$ in $\Mod R$ such that the induced homomorphism $\Hom_R(\sigma, M)$ is surjective.

\begin{sdefinition}(see\cite[Definition 3.7]{AMV})
An $R$-module $S$ is called a partial silting module if there exists a projective presentation $\sigma$ of $S$ such that $\SD_{\sigma}$ contains $S$ and is a torsion class in $\Mod R$. $S$ is called a silting module if there is a projective presentation $\sigma$ of $S$ such that $\Gen(S)=\SD_{\sigma}.$
\end{sdefinition}

\begin{sremark}
By \cite[Remark 3.8]{AMV}, every silting module is a partial silting module, hence $\Gen(S)$ is a torsion class. Support $\tau$-tilting modules over a finite dimensional $k$-algebra are examples of silting modules \cite[Proposition 3.15]{AMV}.
\end{sremark}

\begin{sdefinition}(see \cite[Lemdef 3.1]{AMV})
An $R$-module $T$ is called quasitilting if $\Pres(T)=\Gen(T)$ and $T$ is Ext-projective in $\Gen(T)$.
\end{sdefinition}

Following proposition collects some of the basic properties of the  quasitilting modules. Their proofs can be found in  Lemdef 3.1, Lemma 3.3 and  Proposition 3.2 of \cite{AMV}. Recall that an $R$-modules $T$ is called finendo if it is finitely generated over its endomorphism ring.

\begin{sproposition}\label{AMV}
Let $T$ be a quasitilting  $R$-module.  Then the following statements hold true.
\begin{itemize}
\item[$1.$]
If $X\in\Gen(T)$ then, there exist a set $J$ and a short exact sequence \[0\lrt\Ker\pi\lrt T^{(J)}\st{\pi}\lrt X\lrt 0\]
such that $\Ker\pi\in\Gen(T)$ and $T^{(J)}$ is the coproduct of copies of $T$ indexed by $J$.
That is, $\Gen(T)$ is closed with respect to the kernels of epimorphisms.
\item[$2.$]
 $\Add(T)$ is the class of $\Ext$-projective modules in $\Gen(T)$.
 \item[$3.$] The following are equivalent.
 \begin{itemize}
 \item[$(i)$] $T$ is a finendo quasitilting $R$-module.
 \item[$(ii)$] $\Gen(T)$ is a torsion class and $T$ is a tilting $R/\ann(R)$-module.
 \item[$(iii)$] $T$ is an $\Ext$-projective module in $\Gen(T)$ and  there exists an exact sequence
 \[R\st{f}\lrt T_0\lrt T_1\lrt 0\]
 such that $T_0, T_1\in\Add(T)$ and $f$ is a left $\Gen(T)$-approximation.
 \end{itemize}
\end{itemize}
\end{sproposition}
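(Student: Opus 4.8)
The plan is to first unpack the hypothesis: $T$ quasitilting means $\Pres(T)=\Gen(T)$ and $T$ is $\Ext$-projective in $\Gen(T)$, i.e.\ $\Ext^1_R(T,\Gen(T))=0$. Since $\Ext^1_R(\bigoplus_J T,-)=\prod_J\Ext^1_R(T,-)$ and $\Ext$-projectivity passes to direct summands in the first variable, the whole of $\Add(T)$ is $\Ext$-projective in $\Gen(T)$; this single observation drives parts $1$ and $2$. For part $1$, given $X\in\Gen(T)=\Pres(T)$ I would choose an $\Add(T)$-presentation $M_1\st{u}\lrt M_0\st{v}\lrt X\lrt 0$ and replace $M_0$ by an honest coproduct: writing $T^{(J)}=M_0\oplus M_0'$ and extending $v$ by zero on $M_0'$ gives an epimorphism $\pi\colon T^{(J)}\lrt X$ with $\ker\pi=\im(u)\oplus M_0'$. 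Both $\im(u)$ (an epimorphic image of $M_1\in\Add(T)$) and $M_0'\in\Add(T)$ lie in $\Gen(T)$, and $\Gen(T)$ is closed under finite direct sums, so $0\lrt\ker\pi\lrt T^{(J)}\st{\pi}\lrt X\lrt 0$ realises the asserted closure of $\Gen(T)$ under (first) kernels of epimorphisms.

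Part $2$ is then immediate. The inclusion $\Add(T)\subseteq\Gen(T)$ lands in the $\Ext$-projectives by the observation above; conversely, if $M\in\Gen(T)$ is $\Ext$-projective, applying part $1$ yields $0\lrt K\lrt T^{(J)}\lrt M\lrt 0$ with $K\in\Gen(T)$, whence $\Ext^1_R(M,K)=0$ forces the sequence to split and $M$ to be a summand of $T^{(J)}$, i.e.\ $M\in\Add(T)$.

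For part $3$ I would run the cycle (i)$\Rightarrow$(ii)$\Rightarrow$(iii)$\Rightarrow$(i), writing $\bar R=R/\ann(T)$ and using throughout that $\Gen_R(T)=\Gen_{\bar R}(T)$ and that any $R$-morphism from $R$ into a $\Gen(T)$-object factors through $\bar R$. A preliminary remark I would record is that $\Gen(T)$ is automatically a torsion class: it is always closed under quotients and coproducts, and it is closed under extensions because, for $0\lrt A\lrt B\lrt C\lrt 0$ with $A,C\in\Gen(T)$, an epimorphism $T^{(J)}\lrt C$ lifts to $T^{(J)}\lrt B$ (the obstruction lying in $\Ext^1_R(T^{(J)},A)=0$), so $B$ is generated by that image together with $A$. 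Thus the content of (ii) beyond the standing hypothesis is that $T$ is a tilting $\bar R$-module. For (i)$\Rightarrow$(ii) the finendo hypothesis is used to produce a left $\Gen(T)$-approximation of the regular module lying in $\Add(T)$, which, after passing to $\bar R$, supplies the generating sequence $0\lrt\bar R\lrt T^0\lrt T^1\lrt 0$ and, with $\Ext$-projectivity, the vanishing $\Ext^1_{\bar R}(T,\Add(T))=0$ and the bound $\pd_{\bar R}T\le 1$. For (ii)$\Rightarrow$(iii) I would precompose the tilting generating sequence with $R\twoheadrightarrow\bar R$; exactness is preserved, and since maps out of $R$ into $\Gen(T)$ factor through $\bar R$ the composite $R\st{f}\lrt T^0$ is a left $\Gen(T)$-approximation. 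For (iii)$\Rightarrow$(i) the displayed sequence is exactly a left $\Gen(T)$-approximation of $R$ valued in $\Add(T)$, which characterises finendo-ness, while propagating the approximation property along coproducts $R^{(I)}\lrt T_0^{(I)}$ recovers $\Pres(T)=\Gen(T)$.

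The routine parts are the homological bookkeeping of parts $1$ and $2$ and the exactness and approximation verifications in part $3$. The genuine difficulty concentrates in two places: first, the change-of-rings comparison between $\Mod R$ and $\Mod\bar R$, i.e.\ transferring $\Ext$-vanishing and the projective-dimension bound between the two rings, which is what makes ``$T$ tilting over $\bar R$'' substantive; and second, the equivalence between $T$ being finendo and the regular module admitting an $\Add(T)$-valued left $\Gen(T)$-approximation. This latter equivalence is the crux linking the finiteness condition (i) to the approximation condition (iii), and I expect it to require the functor $\Hom_R(T,-)\colon\Gen(T)\lrt\Mod\End_R(T)$ and the interplay of finite generation over $\End_R(T)$ with the approximation, rather than any formal manipulation.
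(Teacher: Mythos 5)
The paper contains no proof of this proposition to compare against: it is stated as a recollection from the literature, with the arguments explicitly outsourced to Lemdef~3.1, Lemma~3.3 and Proposition~3.2 of \cite{AMV} (and, for the finendo characterisation, ultimately to \cite[Proposition~1.6]{ATT}, which the paper invokes only for the dual cofinendo notion). Measured against those cited proofs, your parts $1$ and $2$ are complete, correct, and essentially the standard arguments: the summand-completion trick $T^{(J)}\cong M_0\oplus M_0'$ with $\Ker\pi=\im(u)\oplus M_0'$ for part $1$ (note this uses only $\Gen(T)=\Pres(T)$), and the splitting argument via $\Ext^1_R(T^{(J)},-)\cong\prod_J\Ext^1_R(T,-)$ together with part $1$ for part $2$. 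Your parenthetical caution about ``(first) kernels of epimorphisms'' is also well taken: only the displayed presentation is true in general, and the ``that is'' gloss in the statement must be read in that weak sense. Indeed $\Gen(T)$ of a quasitilting --- even tilting --- module need not be closed under kernels of arbitrary epimorphisms between its objects: for the path algebra of $A_2$ and the tilting module $T=P_1\oplus S_1$ one has $\Gen(T)=\add(P_1\oplus S_1)$, yet $\Ker(P_1\twoheadrightarrow S_1)=S_2\notin\Gen(T)$. Your silent correction of the statement's $R/\ann(R)$ to $\bar R=R/\ann(T)$ is right, as is your observation that extension-closure of $\Gen(T)$ (hence the torsion-class clause of (ii)) is automatic under the standing quasitilting hypothesis.

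The one place you stop short of a proof is exactly where you say you do: in part $3$ the equivalence between $T$ being finendo and $R$ admitting an $\Add(T)$-valued left $\Gen(T)$-approximation is left as an expectation. This is the content of \cite[Proposition~1.6]{ATT} and of the proof of \cite[Proposition~3.2]{AMV}, and it closes by an elementary argument of just the kind you predict. If $T=St_1+\cdots+St_n$ over $S=\End_R(T)$, then $f\colon R\lrt T^n$, $r\mapsto(t_1r,\dots,t_nr)$, is a left $\Gen(T)$-approximation: any $x\in X\in\Gen(T)$ can be written $x=\sum_j\psi_j(t_j)$ with $\psi_j\in\Hom_R(T,X)$, so the map $r\mapsto xr$ factors through $f$ via $(a_1,\dots,a_n)\mapsto\sum_j\psi_j(a_j)$. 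Conversely, given a left $\Gen(T)$-approximation $f\colon R\lrt T_0$ with $T_0$ a summand of $T^{(J)}$, every $t\in T$ equals $h(f(1))$ for some $h\colon T_0\lrt T$, and since $f(1)$ has finite support in $T^{(J)}$ this exhibits $T$ as generated over $S$ by the finitely many coordinates of $f(1)$; so $T$ is finendo. With this inserted, your cycle (i)$\Rightarrow$(ii)$\Rightarrow$(iii)$\Rightarrow$(i) goes through as sketched: $\Ker f=\ann(T)$ gives the injectivity of $\bar R\lrt T^0$; the cokernel lands in $\Add(T)$ by your part $2$ once one checks the Wakamatsu-type surjectivity of $\Hom(T_0,X)\lrt\Hom(\im f,X)$ for $X\in\Gen(T)$; and the change-of-rings issues are milder than you fear, since every module in $\Gen(T)$ is killed by $\ann(T)$ and $\Ext^1_{\bar R}$ embeds into $\Ext^1_R$, while the converse inclusion $T^{\perp_1}\subseteq\Gen(T)$ over $\bar R$ and the bound on projective dimension follow from the approximation sequence by the routine arguments you indicate. (Your final step recovering $\Pres(T)=\Gen(T)$ in (iii)$\Rightarrow$(i) is unnecessary, since quasitilting is assumed throughout part $3$.)
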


\begin{sproposition}\label{FinendoImpliesSupport}
Let $T$ be a finendo quasitilting module in $\Mod R$. Then $\Add(T)$ is a support $\tau$-tilting subcategory of $\Mod R$.
\end{sproposition}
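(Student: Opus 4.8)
The plan is to verify the three defining properties of a support $\tau$-tilting subcategory directly for $\ST=\Add(T)$. The two facts I would lean on throughout are the identification $\Fac(\Add(T))=\Gen(T)$ (a quotient of an object of $\Add(T)$ is exactly a $T$-generated module) together with the three items of Proposition~\ref{AMV}. First I would dispose of the two cheap conditions. Contravariant finiteness of $\Add(T)$ in $\Mod R$ is standard: for any $M$ the trace evaluation $T^{(\Hom_R(T,M))}\lrt M$ is a right $\Add(T)$-approximation. For the rigidity condition, given $T_1,T_2\in\Add(T)$ one has $\Fac(T_2)\subseteq\Gen(T)$, and since by Proposition~\ref{AMV}(2) the subcategory $\Add(T)$ consists precisely of the $\Ext$-projective objects of $\Gen(T)$, we get $\Ext^1_R(T_1,\Fac(T_2))\subseteq\Ext^1_R(T_1,\Gen(T))=0$.

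The substantial point is the second condition: for every projective $P$ I must produce a sequence $P\st{g}\lrt T^0\lrt T^1\lrt 0$ with $T^0,T^1\in\Add(T)$ and $g$ a left $\Add(T)$-approximation. My strategy is to reduce everything to a left $\Gen(T)$-approximation whose target already lies in $\Add(T)$, because such a map is automatically a left $\Add(T)$-approximation. Proposition~\ref{AMV}(3) supplies exactly this for the regular module: an exact sequence $R\st{f}\lrt T_0\lrt T_1\lrt 0$ with $T_0,T_1\in\Add(T)$ and $f$ a left $\Gen(T)$-approximation. I would then propagate this in two steps. For a free module $R^{(I)}$, applying $(-)^{(I)}$ gives $R^{(I)}\st{f^{(I)}}\lrt T_0^{(I)}\lrt T_1^{(I)}\lrt 0$ with $T_0^{(I)},T_1^{(I)}\in\Add(T)$, and a short Hom-computation (using that $\Hom_R(f^{(I)},Y)=\prod_I\Hom_R(f,Y)$ is a product of surjections for every $Y\in\Gen(T)$) shows $f^{(I)}$ is again a left $\Gen(T)$-approximation. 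For an arbitrary projective $P$, write $P\oplus Q\cong R^{(I)}$; restricting $f^{(I)}$ to the summand $P$ yields a map $g:=f^{(I)}|_P\colon P\lrt T_0^{(I)}$ which is still a left $\Gen(T)$-approximation (extend any $P\lrt Y$ to $P\oplus Q\lrt Y$ by zero and factor the extension through $f^{(I)}$), with target $T_0^{(I)}\in\Add(T)$.

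The main obstacle is then to check that $C:=\Coker g$ lands back in $\Add(T)$, so that $P\st{g}\lrt T_0^{(I)}\lrt C\lrt 0$ has the required shape. Here I would argue, without appealing to any splitting diagram, that $C$ is $\Ext$-projective in $\Gen(T)$ and invoke Proposition~\ref{AMV}(2). On one hand $C$ is a quotient of $T_0^{(I)}\in\Gen(T)$, and $\Gen(T)$ is closed under quotients, so $C\in\Gen(T)$. On the other hand, factoring $g=j\circ p$ with $p\colon P\twoheadrightarrow\im g$ epi and $j\colon\im g\hookrightarrow T_0^{(I)}$ mono, the short exact sequence $0\lrt\im g\st{j}\lrt T_0^{(I)}\lrt C\lrt 0$ gives, for each $Y\in\Gen(T)$, the segment $\Hom_R(T_0^{(I)},Y)\st{j^{*}}\lrt\Hom_R(\im g,Y)\lrt\Ext^1_R(C,Y)\lrt\Ext^1_R(T_0^{(I)},Y)$ of the long exact $\Hom$-sequence. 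The last term vanishes because $T_0^{(I)}$ is $\Ext$-projective in $\Gen(T)$, and $j^{*}$ is surjective because $g$ is a left $\Gen(T)$-approximation: any $\varphi\colon\im g\lrt Y$ satisfies $\varphi p=\psi g=\psi j p$ for some $\psi$, whence $\varphi=\psi j$. Thus $\Ext^1_R(C,Y)=0$ for all $Y\in\Gen(T)$, so $C$ is $\Ext$-projective in $\Gen(T)$ and therefore $C\in\Add(T)$. Since $g$ is a left $\Gen(T)$-approximation with target in $\Add(T)$, it is in particular a left $\Add(T)$-approximation, and the sequence $P\st{g}\lrt T_0^{(I)}\lrt C\lrt 0$ witnesses the second condition; combined with the first paragraph, this shows that $\Add(T)$ is a support $\tau$-tilting subcategory of $\Mod R$.
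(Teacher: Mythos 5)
Your proof is correct, and while it shares the paper's overall skeleton (verify the conditions of Definition~\ref{DefTauTilting} using the three parts of Proposition~\ref{AMV}, starting from the sequence $R\st{f}\lrt T_0\lrt T_1\lrt 0$ of Proposition~\ref{AMV}.3), the execution at both nontrivial steps is genuinely different. For contravariant finiteness, the paper factors a given map through the torsion part: a right $\Gen(T)$-approximation $X\hookrightarrow M$ (using that $\Gen(T)$ is a torsion class) followed by an epimorphism $T^{(J)}\lrt X$ with kernel in $\Gen(T)$ via Proposition~\ref{AMV}.1; your trace map $T^{(\Hom_R(T,M))}\lrt M$ is the standard observation that $\Add(T)$ is precovering for \emph{any} module over any ring, so it is simpler and uses no quasitilting hypothesis at all. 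For the approximation sequence at an arbitrary projective $P$, the paper pushes $R^{(J)}\st{f^{(J)}}\lrt T_0^{(J)}\lrt T_1^{(J)}\lrt 0$ out along the split epimorphism $R^{(J)}\lrt P$, so the cokernel $T_1^{(J)}$ is visibly in $\Add(T)$, and then asserts that the induced map $h\colon T_0^{(J)}\lrt \tilde{T}$ is a split epimorphism to place the middle term in $\Add(T)$; you instead restrict $f^{(I)}$ to the summand $P$, so the middle term $T_0^{(I)}$ is visibly in $\Add(T)$, and carry the burden on the cokernel $C$, which you dispatch by showing $\Ext^1_R(C,-)$ vanishes on $\Gen(T)$ (long exact sequence, surjectivity of $j^*$ from the approximation property, Ext-projectivity of $T_0^{(I)}$) and invoking Proposition~\ref{AMV}.2. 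This trade buys real robustness: a pushout of a split epimorphism along an arbitrary map need not be split (pushing $\Z^2\lrt\Z$, $(a,b)\mapsto a$, out along $(a,b)\mapsto a+2b$ yields $\Z\lrt\Z/2\Z$), so the paper's splitting claim for $h$ needs further justification, and in fact your Ext-projectivity argument, applied to the restriction of $f^{(J)}$ to $\ker(R^{(J)}\lrt P)$, is precisely what would supply it. Your intermediate verifications (that $f^{(I)}$ and its restriction to the summand $P$ remain left $\Gen(T)$-approximations, and that a left $\Gen(T)$-approximation with target in $\Add(T)$ is automatically a left $\Add(T)$-approximation) are all correct.
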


\begin{proof}
We show the validity of conditions of Definition \ref{DefTauTilting}.
Note that $\Fac(\Add(T))=\Gen(T)$.
Then $\Ext^1_R(\Add(T), \Fac(\Add(T)))=0$ follows from the statement $2$ of Proposition \ref{AMV}.

By Statement $3$ of Proposition \ref{AMV}, there exists an exact sequence
\[R\st{f}\lrt T_0\lrt T_1\lrt 0\]
where $T_0, T_1\in \Add(T)$ and $f$ is a left $\Gen(T)$-approximation. Since $T_0 \in \Add(T)$, $f$ is also a left $\Add(T)$-approximation. Now let $P$ be a projective $R$-module. There exists a set $J$ and an  epimorphism  $R^{(J)}\lrt P\lrt 0$ which is split. Consider the pushout diagram
\[\xymatrix{
  &R^{(J)}\ar[r]^{f^{(J)}} \ar[d]& T_0^{(J)}\ar[r]\ar[d]^{h}& T_1^{(J)}\ar[r]\ar@{=}[d]&0\\
 &P\ar[r]^{g} &\tilde{T}\ar[r]&  T_1^{(J)}\ar[r]&0\\
}\]
Since $h$ is a split epimorphism, we get $\tilde{T}\in \Add(T)$.
Also it follows easily from the universal property of the pushout diagrams that $g$ is a left $\Add(T)$-approximation of $P$.

In order to complete the proof, we just need to show that $\Add(T)$ is a contravariantly finte subcategory of $\Mod R$.
Let $M\in \Mod R$.
Since by Proposition \ref{AMV}.3, $\Gen(T)$ is a torsion class, it is a contravariantly finte subcategory of $\Mod R$.
Therefore, there is a monomorphism $0\lrt X\st{\imath}\lrt M $ such that $X\in \Gen(T)$.
By  Proposition \ref{AMV}.1, there exist a set $J$ and  a short exact sequence
\[0\lrt\Ker\pi \lrt T^{(J)}\st{\pi}\lrt X\lrt 0\]
such that $\Ker \pi\in\Gen(T)$.
We show that $T^{(J)}\st{\imath\pi}\lrt M$ is a right $\Add(T)$-approximation.
To do this, let $T'\in\Add(T)$ and $T'\st{\ell}\lrt M$ be a morphism.
Since $\imath$ is a right $\Gen(T)$-approximation, there is a morphism $T'\st{\jmath}\lrt X$ such that $\imath\jmath=\ell$.
By applying $\Hom_R(T', -)$ on the above short exact sequence and using the fact that $\Ker\pi\in\Gen(T)$, we conclude that $\jmath$ factors through $\pi$.
Hence $\ell$ factors through $\imath\pi$ and the result follows.
\end{proof}

As a direct consequence of the above proposition, we have the following.

\begin{scorollary}\label{SiltingImplySupTilting}
Let $S$ be a silting module in $\Mod R$. Then $\Add(S)$ is a support $\tau$-tilting subcategory of $\Mod R$.
\end{scorollary}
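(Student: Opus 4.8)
The plan is to deduce this directly from Proposition~\ref{FinendoImpliesSupport}. That proposition already establishes that $\Add(T)$ is a support $\tau$-tilting subcategory of $\Mod R$ for \emph{any} finendo quasitilting module $T$, so the entire task reduces to observing that every silting $R$-module is finendo quasitilting. Once that reduction is made, there is nothing left to compute.

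To carry out the reduction, let $S$ be a silting module, so that there is a projective presentation $\sigma$ of $S$ with $\Gen(S)=\SD_{\sigma}$. By \cite[Remark~3.8]{AMV}, recorded in the remark following the definition of silting modules, $S$ is partial silting and hence $\Gen(S)=\SD_{\sigma}$ is a torsion class. The same identification $\Gen(S)=\SD_{\sigma}$ is what yields the two quasitilting conditions, namely $\Pres(S)=\Gen(S)$ and the $\Ext$-projectivity of $S$ in $\Gen(S)$; and the fact that $S$ is finitely generated over its endomorphism ring, i.e. finendo, is part of the silting module theory of \cite{AMV}. Thus $S$ is a finendo quasitilting module, and its relevant invariants ($\Fac(\Add(S))=\Gen(S)$ a torsion class, the approximation sequence $R\st{f}\lrt T_0\lrt T_1\lrt 0$) are exactly those fed into Proposition~\ref{FinendoImpliesSupport}.

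Having identified $S$ as finendo quasitilting, Proposition~\ref{FinendoImpliesSupport} applies verbatim and gives that $\Add(S)$ is a support $\tau$-tilting subcategory of $\Mod R$, which is the assertion. The only substantive point in the argument is the passage from silting to finendo quasitilting; but this is precisely the known fact of \cite{AMV} cited at the beginning of the subsection, so I expect no genuine obstacle here and the corollary follows as an immediate specialization of the preceding proposition.
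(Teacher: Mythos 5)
Your proposal is correct and takes essentially the same route as the paper: the paper's proof likewise consists of the single observation that every silting module is finendo quasitilting (citing \cite[Proposition~3.10]{AMV}) followed by an appeal to Proposition~\ref{FinendoImpliesSupport}. Your intermediate sketch of \emph{why} silting implies finendo quasitilting is looser than a full argument (the quasitilting conditions do not fall out of $\Gen(S)=\SD_{\sigma}$ quite as immediately as you suggest), but since you ultimately rest the step on the known result of \cite{AMV}, exactly as the paper does, nothing is missing.
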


\begin{proof}
By \cite[Proposition 3.10]{AMV}, every silting $R$-module is a finendo quasitilting $R$-module. Now the result follows by the above proposition.
\end{proof}

We also have a kind of converse to the previous proposition.

\begin{sproposition}\label{SupportImpliesFinendo}
Let $\ST$ be a support $\tau$-tilting subcategory of $\Mod R$ such that $\ST=\Add(T)$, for some $R$-module $T$. Then $T$ is a finendo quasitilting $R$-module.
\end{sproposition}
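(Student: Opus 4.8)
The plan is to reduce the statement to the verification of condition $(iii)$ of Proposition~\ref{AMV}.3, since the implication $(iii)\Rightarrow(i)$ recorded there immediately yields that $T$ is finendo quasitilting. Throughout I would use the identity $\Fac(\Add(T))=\Gen(T)$, already noted in the proof of Proposition~\ref{FinendoImpliesSupport}. The advantage of routing through Proposition~\ref{AMV}.3$(iii)$ is that I never have to check $\Pres(T)=\Gen(T)$ or finite generation over the endomorphism ring directly; these are packaged into the equivalence.

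First I would extract $\Ext$-projectivity of $T$ in $\Gen(T)$ from Condition~$(i)$ of Definition~\ref{DefTauTilting}. Since $\ST=\Add(T)$ is a support $\tau$-tilting subcategory, we have $\Ext^1_R(\Add(T),\Fac(\Add(T)))=0$, and because $\Fac(\Add(T))=\Gen(T)$ this gives $\Ext^1_R(T,\Gen(T))=0$; that is, $T$ is $\Ext$-projective in $\Gen(T)$, which is the first half of condition $(iii)$.

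Next I would produce the required exact sequence by applying Condition~$(ii)$ of Definition~\ref{DefTauTilting} to the projective module $R$: this gives an exact sequence $R\st{f}\lrt T_0\lrt T_1\lrt 0$ with $T_0,T_1\in\Add(T)$ and $f$ a left $\Add(T)$-approximation. The one point that genuinely needs an argument — and the step I expect to be the main (though mild) obstacle — is upgrading $f$ to a left $\Gen(T)$-approximation, as required by condition $(iii)$. I would argue exactly as in the proof of Proposition~\ref{TauTiltingImpliesWeakCotorsion}: given $X\in\Gen(T)$ and a morphism $g: R\to X$, choose an epimorphism $\pi: T'\lrt X$ with $T'\in\Add(T)$; since $R$ is projective, $g$ lifts to $h: R\to T'$ with $\pi h=g$; since $f$ is a left $\Add(T)$-approximation there is $t: T_0\to T'$ with $tf=h$; then $\pi t: T_0\to X$ satisfies $(\pi t)f=\pi h=g$, so $f$ is indeed a left $\Gen(T)$-approximation.

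Having verified both halves of condition $(iii)$ of Proposition~\ref{AMV}.3, I would conclude by the equivalence $(iii)\Leftrightarrow(i)$ therein that $T$ is a finendo quasitilting $R$-module, completing the proof. I note finally that contravariant finiteness of $\ST$ is not needed in this direction, since condition $(iii)$ does not refer to it.
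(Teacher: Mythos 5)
Your proof is correct and follows essentially the same route as the paper: both verify condition $(iii)$ of Proposition~\ref{AMV}.3 by reading off $\Ext$-projectivity of $T$ in $\Gen(T)=\Fac(\Add(T))$ from Condition~$(i)$ of Definition~\ref{DefTauTilting}, applying Condition~$(ii)$ to $R$, and upgrading the left $\Add(T)$-approximation to a left $\Gen(T)$-approximation via the lifting argument from Proposition~\ref{TauTiltingImpliesWeakCotorsion}. The only difference is presentational: the paper cites that upgrading step by reference, while you spell it out (and correctly observe that contravariant finiteness of $\ST$ is not used).
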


\begin{proof}
Since $\ST$ is a support $\tau$-tilting subcategory, there exists a short exact sequence
\[ R\st{f}\lrt T^0\lrt T^1\lrt 0\]
where $T^0, T^1\in\ST$ and $f$ is a left $\ST$-approximation. We note that $f$  also is a left $\Fac(\ST)$-approximation, see for instance the proof of Proposition \ref{TauTiltingImpliesWeakCotorsion} for the proof of this fact. On the other hand, since $\Fac(\ST)=\Fac(\Add(T))=\Gen(T)$ and $\ST$ is a support $\tau$-tilting subcategory, $T$ is Ext-projective in $\Gen(T)$. Now Proposition \ref{AMV}.3  implies that $T$ is a finendo quasitilting module.
\end{proof}

The following result is a $\Mod$version of Proposition \ref{addgentautilt}.

\begin{sproposition}\label{SupportGenFinendo}
Let $\ST$ be a support $\tau$-tilting subcategory of $\Mod R$. Then there exists a finendo quasitilting module $T$ such that $\Fac(\ST)=\Gen(T)$.
\end{sproposition}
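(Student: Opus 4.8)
The plan is to mimic the proof of Proposition~\ref{addgentautilt}, exploiting that in $\Mod R$ the projective objects are exactly $\Add(R)$ and that $R$ is a single projective generator. Since $\ST$ is a support $\tau$-tilting subcategory, applying Definition~\ref{DefTauTilting} to the projective object $R$ yields an exact sequence
\[ R \st{f}\lrt T^0 \lrt T^1 \lrt 0 \]
with $T^0, T^1 \in \ST$ and $f$ a left $\ST$-approximation. I would set $T = T^0 \oplus T^1$ and show that this $T$ does the job, i.e. that $T$ is finendo quasitilting and that $\Fac(\ST) = \Gen(T)$.

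First I would record the easy inclusion $\Gen(T) \subseteq \Fac(\ST)$: by Corollary~\ref{FunctoriallyFiniteTorsion} the class $\Fac(\ST)$ is a torsion class, hence closed under coproducts, direct summands and quotients, and it contains $T \in \ST$; therefore it contains $\Add(T)$ and all of its epimorphic images, which is precisely $\Gen(T)$. Next, exactly as in the proof of Proposition~\ref{TauTiltingImpliesWeakCotorsion}, the projectivity of $R$ upgrades $f$ from a left $\ST$-approximation to a left $\Fac(\ST)$-approximation. Combined with $\Gen(T) \subseteq \Fac(\ST)$, this makes $f$ a left $\Gen(T)$-approximation whose target lies in $\Add(T)$, while the inclusion $T \in \ST$ gives $\Ext^1_R(T, \Gen(T)) \subseteq \Ext^1_R(\ST, \Fac(\ST)) = 0$, so $T$ is Ext-projective in $\Gen(T)$. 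At this point Proposition~\ref{AMV}.3 applies just as in the proof of Proposition~\ref{SupportImpliesFinendo}, yielding that $T$ is finendo quasitilting.

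It remains to prove the reverse inclusion $\Fac(\ST) \subseteq \Gen(T)$, and this is the step I expect to be the crux. Since $\Gen(T)$ is closed under quotients, it suffices to show $\ST \subseteq \Gen(T)$. Given $\bar T \in \ST$, I would choose an epimorphism $\pi \colon R^{(J)} \lrt \bar T$ from a free module and form the coproduct of the defining sequence, obtaining an exact sequence
\[ R^{(J)} \st{f^{(J)}}\lrt (T^0)^{(J)} \lrt (T^1)^{(J)} \lrt 0. \]
A summand-by-summand check, using the universal property of the coproduct, shows that $f^{(J)}$ is again a left $\Fac(\ST)$-approximation. Since $\bar T \in \Fac(\ST)$, the map $\pi$ then factors as $\pi = s f^{(J)}$ for some $s \colon (T^0)^{(J)} \lrt \bar T$; as $\pi$ is an epimorphism so is $s$, exhibiting $\bar T$ as a quotient of $(T^0)^{(J)} \in \Add(T)$, whence $\bar T \in \Gen(T)$. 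This establishes $\Fac(\ST) = \Gen(T)$ and completes the argument.

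The main obstacle is exactly this last inclusion: one must see that the single approximation of the generator $R$ already controls all of $\Fac(\ST)$. The key point, special to $\Mod R$, is that coproducts of a left $\Fac(\ST)$-approximation of $R$ remain left $\Fac(\ST)$-approximations of free modules, so that every object of $\ST$ is caught as a quotient of an object of $\Add(T)$. This coproduct argument is what replaces the finite bookkeeping (the commutative-diagram chase over $\add(P)$) used in Proposition~\ref{addgentautilt}, where one only had a single projective $P$ with $\Prj(\SA)=\add(P)$ rather than a generator with $\Prj(\Mod R)=\Add(R)$.
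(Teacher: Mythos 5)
Your proof is correct, and it takes a genuinely different route from the paper's. The paper argues top-down: by Theorem~\ref{Bijection} the triple $({}^{\perp_1}\Fac(\ST), \Fac(\ST), \ST^{\perp_0})$ is a $\tau$-triple, Proposition~\ref{TauTriple} then gives, for every $M\in\Mod R$, a left $\Fac(\ST)$-approximation $M\st{\phi}\lrt B$ whose cokernel is Ext-projective in $\Fac(\ST)$, and the existence of the finendo quasitilting module $T$ with $\Gen(T)=\Fac(\ST)$ is then outsourced to \cite[Theorem~3.4]{AMV}, which characterizes such torsion classes by exactly this approximation property; in particular, the paper's proof exhibits no concrete $T$. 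You instead build $T=T^0\oplus T^1$ directly from the approximation sequence $R\st{f}\lrt T^0\lrt T^1\lrt 0$ of the generator and verify everything by hand: the upgrade of $f$ to a left $\Fac(\ST)$-approximation by projectivity of $R$ (as in Proposition~\ref{TauTiltingImpliesWeakCotorsion}), the inclusion $\Gen(T)\subseteq\Fac(\ST)$ from closure of the torsion class under coproducts, Ext-projectivity of $T$, and the crux inclusion $\Fac(\ST)\subseteq\Gen(T)$ via the observation that $f^{(J)}$ is still a left $\Fac(\ST)$-approximation of $R^{(J)}$ --- which is sound, since any $g\colon R^{(J)}\lrt X$ with $X\in\Fac(\ST)$ factors componentwise through $f$ and the factorizations assemble by the universal property of the coproduct, so $\pi=sf^{(J)}$ with $s$ an epimorphism. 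You then close with the implication $(\mathrm{iii})\Rightarrow(\mathrm{i})$ of Proposition~\ref{AMV}.3, the elementary characterization already quoted in the paper, rather than the heavier \cite[Theorem~3.4]{AMV}. What your route buys: it is more self-contained, it is the honest $\Mod R$-analogue of Proposition~\ref{addgentautilt} with the coproduct argument replacing the finite diagram chase over $\add(P)$, and it yields strictly more than the statement asks, namely a concrete $T\in\ST$ realizing $\Fac(\ST)=\Gen(T)$. What the paper's route buys is brevity and uniformity: it recycles the $\tau$-triple approximation machinery and produces approximations for all modules at once. One small suggestion: since the original \cite[Proposition~3.2]{AMV} presupposes that $\Gen(T)$ is a torsion class, it is slightly cleaner to establish the equality $\Fac(\ST)=\Gen(T)$ (your last step, which does not use the finendo claim) \emph{before} invoking Proposition~\ref{AMV}.3; the two steps are independent, so this is only a reordering, not a gap.
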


\begin{proof}
By Theorem \ref{Bijection},  $({}^{\perp_1}\Fac(\ST), \Fac(\ST), \ST^{\perp_0})$ is a $\tau$-triple.  Hence by Proposition \ref{TauTriple}, for every $M\in\Mod R$, there exists an exact sequence
\[M\st{\phi}\lrt B\lrt C\lrt 0\]
such that $\phi$ is a left $\Fac(\ST)$-approximation and $C\in {}^{\perp_1}\Fac(\ST)$, that is, $C$ is an Ext-projective in $\Fac(\ST)$. Hence by \cite[Theorem 3.4]{AMV}, we deduce that there exists a finendo quasitilting $R$-module $T$ such that $\Fac(\ST)=\Gen(T)$.
\end{proof}

\begin{sdefinition}
Let $\SA$ be an abelian category with enough projective objects. Let $\ST$ and $\ST'$ be two support $\tau$-titling subcategories of $\SA$. We say that $\ST$ and $\ST'$ are equivalent if  $\Fac(\ST)=\Fac(\ST')$.
\end{sdefinition}

We now have enough ingredients for the proof of our main theorem. Recall that, by \cite[Page 12]{AMV}, two quasitilting modules $T_1$ and $T_2$ are equivalent if $\Add(T_1)=\Add(T_2)$.

\begin{stheorem}\label{Bijection-Finendo}
There is a bijection between equivalence classes of support $\tau$-tilting subcategories of $\Mod R$ and equivalence classes of finendo quasitilting $R$-modules.
\end{stheorem}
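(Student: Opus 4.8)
The plan is to build the bijection directly from the two propositions just established, Proposition~\ref{FinendoImpliesSupport} and Proposition~\ref{SupportGenFinendo}, using the characterisation of $\Add(T)$ as the class of $\Ext$-projective objects of $\Gen(T)$ (Proposition~\ref{AMV}.2) as the mechanism that makes all the assignments well defined. First I would define
\[\Phi : \{\text{support } \tau\text{-tilting subcategories}\}/\!\sim \;\longrightarrow\; \{\text{finendo quasitilting modules}\}/\!\sim\]
by sending the class of a support $\tau$-tilting subcategory $\ST$ to the class of a finendo quasitilting module $T$ with $\Fac(\ST)=\Gen(T)$; such a $T$ exists by Proposition~\ref{SupportGenFinendo}. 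In the reverse direction I would set $\Psi([T])=[\Add(T)]$, which lands among support $\tau$-tilting subcategories by Proposition~\ref{FinendoImpliesSupport}, recalling that $\Fac(\Add(T))=\Gen(T)$. Note that this approach never requires $\ST$ itself to be of the form $\Add(T)$ for a single module; everything is routed through the torsion class $\Fac(\ST)$, which is all that the two propositions produce.

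The step I expect to carry the weight is well-definedness, which in every instance reduces to the single claim that, for finendo quasitilting modules $T$ and $T'$,
\[\Gen(T)=\Gen(T') \;\Longrightarrow\; \Add(T)=\Add(T').\]
This is immediate from Proposition~\ref{AMV}.2: since $\Add(T)$ is exactly the subcategory of $\Ext$-projective objects of $\Gen(T)$, it is recovered from $\Gen(T)$ alone. Granting this claim, $\Phi$ is independent of the module $T$ chosen for a given $\ST$ (two choices share the same $\Gen$, hence the same $\Add$) and independent of the representative $\ST$ (equivalent subcategories have equal $\Fac$, hence their associated modules share the same $\Gen$); and $\Psi$ is well defined because $T\sim T'$ means $\Add(T)=\Add(T')$, whence $\Fac(\Add(T))=\Gen(T)=\Gen(T')=\Fac(\Add(T'))$.

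Finally I would verify that the two composites are the identity. For $\Psi\Phi$, starting from $\ST$ with $\Phi([\ST])=[T]$ and $\Gen(T)=\Fac(\ST)$, one gets $\Fac(\Add(T))=\Gen(T)=\Fac(\ST)$, so $\Psi\Phi([\ST])=[\Add(T)]=[\ST]$. For $\Phi\Psi$, starting from a finendo quasitilting $T$, we have $\Psi([T])=[\Add(T)]$ and then $\Phi([\Add(T)])=[T']$ with $\Gen(T')=\Fac(\Add(T))=\Gen(T)$; the key claim gives $\Add(T')=\Add(T)$, that is $[T']=[T]$. Hence $\Phi$ and $\Psi$ are mutually inverse, proving the asserted bijection. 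The only genuine obstacle is the well-definedness discussed above; once the $\Ext$-projective characterisation of $\Add(T)$ inside $\Gen(T)$ is in hand, the remaining checks are purely formal consequences of the two equivalence relations $\Fac(\ST)=\Fac(\ST')$ and $\Add(T)=\Add(T')$.
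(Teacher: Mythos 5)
Your proof is correct and takes essentially the same route as the paper, whose proof simply cites Propositions~\ref{SupportGenFinendo} and \ref{FinendoImpliesSupport}; your maps $\Phi$ and $\Psi$ are exactly the intended ones. Your well-definedness check via the $\Ext$-projective characterisation of $\Add(T)$ inside $\Gen(T)$ (Proposition~\ref{AMV}.2) correctly supplies the only detail the paper leaves implicit.
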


\begin{proof}
The result follows by Propositions \ref{SupportGenFinendo} and \ref{FinendoImpliesSupport}.
\end{proof}

\subsection{Cosilting modules and support $\tau^-$-tilting subcategories}
Our aim in this subsection is to characterize all support $\tau^{-}$-tilting subcategories of $\Mod R$ in term of quasicotilting $R$-modules. It is known that all cosilting modules are quasicotilting. As a result, we show that $\Prod(C)$ is a $\tau^{-}$-tilting subcategory of $\Mod R$, where $C$ is a cosilting $R$-module, where $\Prod(M)$ denote the class of all modules isomorphic to an arbitrary direct product of copies of $M$.

For a module $M$ in $\Mod R$, let $\Cogen(M)$ be the subcategory of $\Mod R$ consisting of all $M$-cogenerated modules, i.e. all modules isomorphic to a submodule of modules in $\Prod(M)$ and let $\Copres(M)$ be the subcategory of $\Mod R$ consisting of all $M$-copresented modules, i.e. all modules that admit a $\Prod(M)$-copresentation.
Recall that a $\Prod(M)$-copresentation of an $R$-module $X$ is an exact sequence
\[0 \lrt  X \lrt M_0 \lrt M_1,\]
with $M_0$ and $M_1$ in $\Prod(M)$.

Let $\zeta$ be a morphism in $\Inj(R)$. Let $\SB_{\zeta}$ denote the class of all modules $M$ in $\Mod R$ such that the induced homomorphism $\Hom_R(M, \zeta)$ is surjective.

\begin{sdefinition}(see \cite[Definition 3.1]{BP})
An $R$-module $C$ is called a partial cosilting module if there is an injective copresentation $\zeta$ of $C$ such that $\SB_{\zeta}$ contains $C$ and the class $\SB_\zeta$ is closed under direct products.
Moreover, $C$ is called a cosilting module if there is an injective copresentation $\zeta$ of $C$ such that $\Cogen(C)=\SB_{\zeta}.$
\end{sdefinition}

By \cite[Remark 3.2]{BP}, every cosilting module is a partial cosilting module. In particular, for every cosilting module $C$, $\Cogen(C)$ is a torsion-free class.

\begin{sdefinition}(see \cite[Definition 2.1]{ZW})
An $R$-module $T$ is called a quasicotilting module if $\Cogen(T)=\Copres(T)$, $\Hom_R(-, T)$ preserves exactness of any short exact sequence in $\Cogen(T)$ and $T$ is $\Ext$-injective  in $\Cogen(T)$.
\end{sdefinition}

By \cite[Proposition 2.4]{ZW}, if $T$ is a quasicotilting $R$-module, then $\Cogen(T)$ is a torsion-free class.
Also \cite[Proposition 2.11]{ZW} implies that all quasicotilting $R$-modules are cofinendo.  Recall that $R$-module $T$ is cofinendo if and only if there exists a right $\Prod(T)$-approximation of an injective cogenerator $\Mod R$, see \cite[Proposition 1.6]{ATT}.

The following proposition collects some of the basic properties of the quasicotilting modules.
 The proofs can be found in   Lemma 3.1 and Theorem 3.2 of \cite{ZW}.

\begin{sproposition}\label{ZW}
Let $E$ be an injective cogenerator of $\Mod R$. Let $T$ be a quasicoilting $R$-module.
Then the following statements hold true.
\begin{itemize}
\item[$1.$]
If $X\in\Cogen(T)$ then there exist a set $J$ and a short exact sequence \[0\lrt X\st{\imath}\lrt T^J\lrt \Coker\imath\lrt 0\]
such that $\Coker\imath\in\Cogen(T)$ and $T^J$ is  the product of copies of $T$ indexed by $J$.
That is, $\Cogen(T)$ is closed with respect to the cokernels of monomorphisms.
\item[$2.$]
$\Prod(T)$ is the class of $\Ext$-injective module in $\Cogen(T)$.
 \item[$3.$] $T$ is quasicotilting if and only if $T$ is an $Ext$-injective  in $\Cogen(T)$ and
 there exists an exact sequence
 \[0\lrt T_0\lrt T_1\st{f}\lrt E \]
 such that $T_0, T_1\in\Prod(T)$ and   $f$ is a right  $\Cogen(T)$-approximation.
\end{itemize}
\end{sproposition}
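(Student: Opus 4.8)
The plan is to read Proposition~\ref{ZW} as the exact $\Cogen$--$\Prod$ dual of Proposition~\ref{AMV}: each assertion is obtained from the corresponding statement for quasitilting modules by reversing arrows and replacing $\Gen$, $\Pres$, $\Add$ and the projective generator $R$ by $\Cogen$, $\Copres$, $\Prod$ and the injective cogenerator $E$. Throughout I would argue from the three defining properties of a quasicotilting module $T$, namely that $\Cogen(T)=\Copres(T)$, that $\Hom_R(-,T)$ takes short exact sequences lying in $\Cogen(T)$ to exact sequences, and that $T$ is $\Ext$-injective in $\Cogen(T)$; and I would freely use the facts recalled just before the statement, that $\Cogen(T)$ is a torsion-free class (hence closed under products, submodules and extensions) and that $T$ is cofinendo.

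For Part $1$, given $X\in\Cogen(T)=\Copres(T)$ I would begin from a $\Prod(T)$-copresentation $0\lrt X\lrt M_0\st{\alpha}\lrt M_1$ with $M_0,M_1\in\Prod(T)$. Here $M_0/X\cong\im\alpha$ is a submodule of $M_1\in\Prod(T)$, so it lies in $\Cogen(T)$. Enlarging $M_0$ to an honest power $T^J$ of which it is a direct summand only adjoins a $\Prod(T)$-summand to the cokernel, so the resulting sequence $0\lrt X\lrt T^J\lrt \Coker\lrt 0$ still has cokernel in $\Cogen(T)$, since $\Cogen(T)$ is closed under direct sums. The closure of $\Cogen(T)$ under cokernels of monomorphisms then follows as in \cite[Lemma~3.1]{ZW}, by comparing an arbitrary short exact sequence $0\lrt A\lrt B\lrt C\lrt 0$ in $\Cogen(T)$ with this special one and invoking $\Ext$-injectivity of $T^J$. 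Part $2$ is then short: $\Prod(T)\subseteq\Cogen(T)$ consists of $\Ext$-injective objects because $\Ext$-injectivity of $T$ in $\Cogen(T)$ passes to arbitrary products and direct summands; conversely, if $M\in\Cogen(T)$ is $\Ext$-injective, applying Part $1$ gives $0\lrt M\lrt T^J\lrt C\lrt 0$ with $C\in\Cogen(T)$, so $\Ext^1_R(C,M)=0$ splits the sequence and $M\in\Prod(T)$.

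For Part $3$, the forward implication is immediate for the $\Ext$-injectivity clause, while the sequence $0\lrt T_0\lrt T_1\st{f}\lrt E$ is produced from cofiniteness: being cofinendo, $T$ admits a right $\Prod(T)$-approximation $f\colon T_1\lrt E$ of the injective cogenerator, which is in fact a right $\Cogen(T)$-approximation since $\Cogen(T)=\Copres(T)$, and $T_0=\Ker f$ is placed in $\Prod(T)$ by the argument of \cite{ZW} (applying Part $2$ once its $\Ext$-injectivity is checked). The reverse implication is where I expect the genuine work to lie: from $\Ext$-injectivity of $T$ together with the single approximating copresentation of $E$ one must recover both $\Cogen(T)=\Copres(T)$ and the exactness of $\Hom_R(-,T)$ on $\Cogen(T)$. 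My strategy would be to dualize the generator argument of \cite{AMV}: embed an arbitrary $X\in\Cogen(T)$ into a product of copies of $E$, pull the approximation $f$ back along this embedding to build a $\Prod(T)$-copresentation of $X$, and use $\Ext$-injectivity to control the connecting maps. Making this precise --- in particular checking that products of the chosen approximation remain approximations and that the copresentation so constructed really witnesses $X\in\Copres(T)$ --- is the main obstacle, and it is exactly the content of \cite[Theorem~3.2]{ZW}.
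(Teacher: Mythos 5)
Your proposal matches the paper in substance: the paper offers no proof of Proposition~\ref{ZW} at all, stating only that ``the proofs can be found in Lemma 3.1 and Theorem 3.2 of \cite{ZW}'' --- exactly the two sources to which your sketch defers for the substantive steps (the special copresentation underlying Part~1 and the reverse implication of Part~3). Your reconstructed arguments (the dualization of Proposition~\ref{AMV}, the splitting argument for Part~2, and the use of cofinendo plus injectivity of $E$ for the forward direction of Part~3) are sound, so there is nothing to correct.
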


In the following we show that every quasicotilting module induces a support $\tau^-$-tilting subcategory of $\Mod R$.
The proof is essentially dual to that of Proposition \ref{FinendoImpliesSupport}, but we include it for the sake of completeness.

\begin{sproposition}\label{CofinendoImpliesSupport}
Let $E$ be an injective cogenerator of $\Mod R$. Let $T$ be a quasicotilting  $R$-module. Then $\Prod(T)$ is a support $\tau^-$-tilting subcategory of $\Mod R$.
\end{sproposition}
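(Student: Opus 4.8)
The plan is to dualise the proof of Proposition~\ref{FinendoImpliesSupport} step by step, verifying the three conditions of Definition~\ref{DefTauInverseTilting} for $\SU=\Prod(T)$. The starting observation is that $\Sub(\Prod(T))=\Cogen(T)$, the exact dual of the identity $\Fac(\Add(T))=\Gen(T)$ used in the finendo case. Granting this, Condition~$1$ of Definition~\ref{DefTauInverseTilting}, namely $\Ext^1_R(\Cogen(T),\Prod(T))=0$, is immediate from Proposition~\ref{ZW}.$2$, which says that $\Prod(T)$ is precisely the class of $\Ext$-injective objects of $\Cogen(T)$.

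For Condition~$2$, I would start from the sequence supplied by Proposition~\ref{ZW}.$3$,
\[0\lrt T_0\lrt T_1\st{f}\lrt E,\]
with $T_0,T_1\in\Prod(T)$ and $f$ a right $\Cogen(T)$-approximation of the injective cogenerator $E$; since $T_1\in\Prod(T)\subseteq\Cogen(T)$ this $f$ is in particular a right $\Prod(T)$-approximation. Given an arbitrary injective $I$, the fact that $E$ is a cogenerator yields a split monomorphism $I\lrt E^J$ for some set $J$. Applying $(-)^J$ to the displayed sequence (products are exact) gives $0\lrt T_0^J\lrt T_1^J\st{f^J}\lrt E^J$, and I would form the pullback of $f^J$ along $I\hookrightarrow E^J$. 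This produces an exact sequence $0\lrt T_0^J\lrt\tilde T\st{g}\lrt I$ with $T_0^J\in\Prod(T)$. That $g$ is a right $\Prod(T)$-approximation of $I$ follows from the universal property of the pullback together with the fact that $f^J$ is a right $\Cogen(T)$-approximation of $E^J$ (a map $T'\lrt E^J$ with $T'\in\Prod(T)\subseteq\Cogen(T)$ factors through $f^J$ coordinatewise, then lifts across the pullback). This is exactly the mirror of the pushout argument producing the left $\Add(T)$-approximation of a projective in the finendo case.

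Condition~$3$, the covariant finiteness of $\Prod(T)$, I would obtain as the dual of the contravariant-finiteness argument. By \cite[Proposition~2.4]{ZW} the class $\Cogen(T)$ is a torsion-free class, hence covariantly finite: every $M$ admits a left $\Cogen(T)$-approximation $M\st{\imath}\lrt X$ (the reflection $M\twoheadrightarrow M/tM$). By Proposition~\ref{ZW}.$1$ there is a short exact sequence $0\lrt X\st{\jmath}\lrt T^J\lrt\Coker\jmath\lrt 0$ with $\Coker\jmath\in\Cogen(T)$ and $T^J\in\Prod(T)$. I claim $M\st{\jmath\imath}\lrt T^J$ is a left $\Prod(T)$-approximation: any $\ell\colon M\lrt T'$ with $T'\in\Prod(T)\subseteq\Cogen(T)$ first factors through $\imath$, and the resulting map $X\lrt T'$ then factors through $\jmath$ because $\Ext^1_R(\Coker\jmath,T')=0$ by the $\Ext$-injectivity of $\Prod(T)$ in $\Cogen(T)$ (Proposition~\ref{ZW}.$2$). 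This yields the required factorisation of $\ell$ through $\jmath\imath$.

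The main obstacle is the verification that the middle term $\tilde T$ of the pullback sequence in Condition~$2$ actually lies in $\Prod(T)$; this is the dual of the assertion in Proposition~\ref{FinendoImpliesSupport} that the pushed-out map is a split epimorphism. The naive hope, that the pullback of the split monomorphism $I\hookrightarrow E^J$ along $f^J$ is again a split monomorphism, does not hold for an arbitrary right $\Cogen(T)$-approximation, so I would instead secure $\tilde T\in\Prod(T)$ by importing the more careful diagrammatic argument of Proposition~\ref{addgentautilt}: decompose using $E^J\cong I\oplus I'$, set up the relevant (co)kernel comparison, and split the auxiliary short exact sequence by appealing to the $\Ext$-injectivity of $\Prod(T)$ in $\Cogen(T)$, which forces the middle term into $\Prod(T)$. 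Once $\tilde T\in\Prod(T)$ is established, the three conditions together show that $\Prod(T)$ is a support $\tau^-$-tilting subcategory of $\Mod R$.
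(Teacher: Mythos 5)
Your verifications of Conditions~1 and~3 coincide with the paper's proof, and your pullback construction for Condition~2 is exactly the paper's; your instinct about where the difficulty sits is also sound, since the paper merely asserts that the pulled-back map $h\colon \tilde{T} \lrt T_1^J$ is a split monomorphism, and, as you say, this is not a formal consequence of $I \lrt E^J$ being a split monomorphism. The gap is in your proposed repair. The auxiliary short exact sequence one would need to split is $0 \lrt \tilde{T} \st{h}\lrt T_1^J \lrt C \lrt 0$, where $C \cong \im(f_2)$ and $f_2\colon T_1^J \lrt I'$ is the second component of $f^J$ under $E^J \cong I \oplus I'$. Ext-injectivity of $\Prod(T)$ in $\Cogen(T)$ cannot split it: that would require the kernel term $\tilde{T}$ to be known to lie in $\Prod(T)$, which is precisely what is being proved, so your appeal is circular. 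The other candidate sequence, $0 \lrt T_0^J \lrt \tilde{T} \lrt \im(g) \lrt 0$, whose left term does lie in $\Prod(T)$, is no better, because $\im(g)$ is a submodule of the injective $I$ and need not lie in $\Cogen(T)$. Finally, the literal dual of the diagram of Proposition~\ref{addgentautilt} also fails: writing $\tilde{T}' = \ker f_1$ for the first component $f_1\colon T_1^J \lrt I$, the comparison map $\tilde{T} \oplus \tilde{T}' \lrt T_1^J$ has kernel $T_0^J$ but need not be an epimorphism, so one never obtains a short exact sequence with both outer terms under control.

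What your sketch never invokes at this step, and what actually closes the argument, is the approximation property of $f^J$. Since $T_1^J \in \Prod(T) \subseteq \Cogen(T)$ and $\tilde{T} = \ker f_2$ is a submodule of $T_1^J$ (and $\Cogen(T)$ is a torsion-free class, hence closed under submodules), Proposition~\ref{ZW}.1 gives $C \in \Cogen(T)$. Now every morphism $\phi\colon X \lrt C$ with $X \in \Cogen(T)$ lifts along the quotient $T_1^J \lrt C$: compose with the inclusion $C \hookrightarrow I'$, extend by zero to $(0,\phi)\colon X \lrt I \oplus I' = E^J$, and factor this through the right $\Cogen(T)$-approximation $f^J$ (which is a right $\Cogen(T)$-approximation of $E^J$ coordinatewise, as you observed); the resulting $\psi\colon X \lrt T_1^J$ satisfies $f_2\psi = \phi$. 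Taking $X = C$ and $\phi = \mathrm{id}_C$ shows that $T_1^J \lrt C$ is a split epimorphism, hence $h$ is a split monomorphism and $\tilde{T} \in \Prod(T)$ as a direct summand of $T_1^J$. This fills the gap in your proposal and simultaneously justifies the paper's unproved assertion that $h$ splits; with it, the rest of your argument goes through as written.
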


\begin{proof}
We show the validity of the conditions of Definition \ref{DefTauInverseTilting}. For the first condition, note that $\Sub\Prod(T)=\Cogen(T)$.
The fact that
\[\Ext^1_R(\Sub\Prod(T), \Prod(T))=0\]
follows from Proposition~\ref{ZW}.2.
For the second condition, consider the exact sequence
\[0\lrt T_0\lrt T_1\st{f}\lrt E \]
of the statement $3$ of Proposition~\ref{ZW}, in which $T_0, T_1\in\Prod(T)$ and $f$ is a right $\Cogen(T)$-approximation of $E$.
Now let $I$ be an injective $R$-module.
There exists a set $J$ and a monomorphism $0\lrt I\lrt E^{J}$ which is a split morphism.
Consider the pullback diagram
\[\xymatrix{
 0\ar[r] &T_0^J\ar[r] \ar@{=}[d]&\tilde{T} \ar[r]^{g}\ar[d]^{h}&I\ar[d]\\
 0\ar[r]&T_0^J\ar[r] &T_1^J\ar[r]^{f^J}& E^J\\
}\]
Since $h$ is a split monomorphism, we get $\tilde{T}\in \Prod(T)$.
Moreover, it follows easily from the universal property of the pullback diagrams that $g$ is a right $\Prod(T)$-approximation of $I$.

To complete the proof it remains to show that $\Prod(T)$ is a covariantly finite subcategory of $\Mod R$.
Let $M\in\Mod R$.
Since $\Cogen(T)$ is a torsion-free class, it is a covariantly finite subcategory of $\Mod R$.
Therefore, there is an epimorphism $ M\st{\pi}\lrt X\lrt 0$ such that $X\in \Cogen(T)$.
By  Proposition \ref{ZW}.1, there exist a set $J$ and a short exact sequence
\[0\lrt X\st{\imath}\lrt T^{J}\lrt \Coker\imath\lrt 0\]
such that $\Coker \imath\in\Cogen(T)$.
We claim that $M \st{\imath \pi}\lrt T^{J}$ is a left $\Prod(T)$-approximation.
Indeed, let $f: M \lrt \Tilde{T}$ be a map with $\Tilde{T} \in \Prod(T)$.
Then $f$ factors through $X$ because $\Tilde{T} \in \Cogen(T)$.
Moreover, $\Cogen(T)$ is closed under cokernels of monomorphisms by Proposition~\ref{ZW}.1.
Hence we can lift the factorisation $f$ through $X$ using $\pi$ and conclude that $f$ factors through $\imath\pi$.
\end{proof}

As a direct consequence of the above proposition, we have the following.

\begin{scorollary}\label{Cosilting}
Let $C$ be a cosilting module in $\Mod R$  with respect to an injective copresentation $\zeta$.
Then $\Prod(C)$ is a support $\tau^-$-tilting subcategory of $\Mod R$.
\end{scorollary}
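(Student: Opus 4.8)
The plan is to derive this statement as an immediate corollary of Proposition~\ref{CofinendoImpliesSupport}, mirroring exactly the way Corollary~\ref{SiltingImplySupTilting} is obtained from Proposition~\ref{FinendoImpliesSupport} on the silting side. Proposition~\ref{CofinendoImpliesSupport} already establishes that $\Prod(T)$ is a support $\tau^-$-tilting subcategory of $\Mod R$ whenever $T$ is a quasicotilting module, so the only ingredient left to supply is the fact that the cosilting module $C$ is itself quasicotilting. Once that is in hand, applying Proposition~\ref{CofinendoImpliesSupport} with $T=C$ (and any fixed injective cogenerator $E$ of $\Mod R$) yields the claim directly.

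For the remaining step I would invoke the known implication that every cosilting $R$-module is quasicotilting, which is the categorical dual of \cite[Proposition~3.10]{AMV} and is recorded in the preamble of this subsection (see \cite{ZW, BP}). Concretely, from the injective copresentation $\zeta$ one has $\Cogen(C)=\SB_\zeta$, and this equality is precisely what is needed to verify the three defining conditions of a quasicotilting module: that $\Cogen(C)=\Copres(C)$, that $\Hom_R(-, C)$ preserves exactness of short exact sequences in $\Cogen(C)$, and that $C$ is $\Ext$-injective in $\Cogen(C)$. With $C$ thus known to be quasicotilting, Proposition~\ref{CofinendoImpliesSupport} applies verbatim and produces the desired support $\tau^-$-tilting subcategory $\Prod(C)$.

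I do not expect a genuine obstacle here, since all the technical work has been carried out in Proposition~\ref{CofinendoImpliesSupport}; the argument reduces to a one-line deduction once the cosilting-implies-quasicotilting fact is cited. The only point requiring minor care is to match the injective copresentation data of $C$ correctly with the hypotheses of the quasicotilting definition, but this is routine and parallels the silting case of Corollary~\ref{SiltingImplySupTilting} step for step.
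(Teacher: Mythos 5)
Your proposal is correct and follows essentially the same route as the paper: both reduce the corollary to the fact that every cosilting module is quasicotilting and then apply Proposition~\ref{CofinendoImpliesSupport}. The only difference is in citation detail --- the paper pins the cosilting-implies-quasicotilting step to specific results of Breaz--Pop (\cite[Lemma~3.4, Corollary~3.5, Proposition~2.4]{BP}) rather than invoking it as a dual of \cite[Proposition~3.10]{AMV}, but the underlying argument is identical.
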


\begin{proof}
By \cite[Lemma 3.4]{BP}, $C$ is $\Ext$-injective in $\Cogen(T)$. Moreover, by \cite[Corollary 3.5]{BP}, we have $\Cogen(C)=\Copres(C)$.  Now \cite[Proposition 2.4]{BP} implies that every cosilting $R$-module $C$ is quasicotilting. Hence  the result follows by the above proposition.
\end{proof}

The following is a kind of converse to the previous proposition which is also the duall of Proposition \ref{SupportImpliesFinendo}.

\begin{sproposition}
Let $E$ be an injective cogenerator of $\Mod R$. Let $\SU$ be a support $\tau^-$-tilting subcategory of $\Mod R$ such that $\SU=\Prod(T)$, for some $R$-module $T$. Then $T$ is a  quasicotilting $R$-module.
\end{sproposition}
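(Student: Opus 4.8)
The plan is to verify the two conditions in the characterization of quasicotilting modules given in Proposition~\ref{ZW}.3, exploiting the observation that $\Sub\SU=\Sub\Prod(T)=\Cogen(T)$. Concretely, I need to check that $T$ is $\Ext$-injective in $\Cogen(T)$ and that there is an exact sequence $0\lrt T_0\lrt T_1\st{f}\lrt E$ with $T_0,T_1\in\Prod(T)$ and $f$ a right $\Cogen(T)$-approximation; the proposition will then give that $T$ is quasicotilting. This is the exact dual of the argument used in Proposition~\ref{SupportImpliesFinendo}, so I would follow that template step by step.

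The $\Ext$-injectivity is immediate: since $\SU$ is a support $\tau^-$-tilting subcategory, condition~$1$ of Definition~\ref{DefTauInverseTilting} gives $\Ext^1_R(\Sub\SU,\SU)=0$, that is $\Ext^1_R(\Cogen(T),\Prod(T))=0$, and in particular $\Ext^1_R(\Cogen(T),T)=0$, so $T$ is $\Ext$-injective in $\Cogen(T)$. For the exact sequence, since $E$ is an injective cogenerator it is in particular injective, so condition~$2$ of Definition~\ref{DefTauInverseTilting} applied to $I=E$ produces an exact sequence $0\lrt U^0\lrt U^1\st{g}\lrt E$ with $U^0,U^1\in\SU=\Prod(T)$ and $g$ a right $\SU$-approximation of $E$.

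The one substantive point --- and the step I expect to be the main obstacle --- is to upgrade $g$ from a right $\Prod(T)$-approximation to a right $\Cogen(T)$-approximation, which is the dual of the fact (proved inside Proposition~\ref{TauTiltingImpliesWeakCotorsion} and invoked in Proposition~\ref{SupportImpliesFinendo}) that a left $\ST$-approximation is automatically a left $\Fac(\ST)$-approximation. Dualizing that argument, I would take any $X\in\Cogen(T)=\Sub\Prod(T)$ together with a morphism $h\colon X\lrt E$, choose a monomorphism $\iota\colon X\lrt U$ with $U\in\Prod(T)$, and use the injectivity of $E$ to lift $h$ to a morphism $h'\colon U\lrt E$ with $h'\iota=h$. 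Since $g$ is a right $\SU$-approximation and $U\in\SU$, there is a morphism $t\colon U\lrt U^1$ with $gt=h'$; then $g(t\iota)=h'\iota=h$, so $h$ factors through $g$. Hence $g$ is a right $\Cogen(T)$-approximation, and Proposition~\ref{ZW}.3 yields that $T$ is a quasicotilting $R$-module.
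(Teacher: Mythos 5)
Your proof is correct and takes essentially the same route as the paper's: both verify the two conditions of Proposition~\ref{ZW}.3 using the identification $\Sub\SU=\Sub\Prod(T)=\Cogen(T)$ together with the exact sequence $0\lrt U^0\lrt U^1\st{g}\lrt E$ supplied by Definition~\ref{DefTauInverseTilting} for the injective cogenerator $E$. The only difference is that you explicitly carry out, via the injectivity of $E$, the upgrade of $g$ from a right $\Prod(T)$-approximation to a right $\Cogen(T)$-approximation, a step the paper only asserts as the dual of the argument in Proposition~\ref{TauTiltingImpliesWeakCotorsion}.
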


\begin{proof}
Since $\SU$ is a support $\tau^-$-tilting subcategory,  for $E$ there exists an exact sequence
\[0\lrt  U^0\lrt U^1\st{f}\lrt E\]
where $U^0, U^1\in\ST$ and $f$ is a right $\SU$-approximation. We note that $f$  also is a right $\Sub\SU$-approximation. On the other hand, since $\Sub\SU =\Sub \Prod(T) =\Cogen(T)$ and $\SU$ is a support $\tau^-$-tilting subcategory, $T$ is Ext-injective in $\Cogen(T)$. Now Proposition \ref{ZW}.3  implies that $T$ is a  quasicotilting module.
\end{proof}

The following result is a duall of Proposition \ref{SupportGenFinendo}.

\begin{sproposition}\label{TauInverseSupportCogenCofinendo}
Let $\SU$ be a support $\tau^-$-tilting subcategory of $\Mod R$. Then there exists a  quasicotilting module $T$ such that $\Sub \SU=\Cogen(T)$.
\end{sproposition}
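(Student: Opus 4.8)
The plan is to dualise the proof of Proposition~\ref{SupportGenFinendo} essentially line by line, replacing each $\tau$-theoretic input by its counterpart from Section~\ref{Sec:Dual}. First I would feed $\SU$ into the dual bijection: by Theorem~\ref{Bijection-Dual} the triple $({}^{\perp_0}\SU, \Sub\SU, (\Sub\SU)^{\perp_1})$ is a $\tau^-$-torsion cotorsion triple, and in particular $\Sub\SU$ is a functorially finite torsion-free class by Corollary~\ref{FunctoriallyFiniteTorsionFree}. Thus we are in the situation where $\Sub\SU$ is a well-behaved torsion-free class and we only need to produce a single quasicotilting module cogenerating it.

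Next I would extract the approximation sequences witnessing ``enough Ext-injective objects''. Applying Proposition~\ref{TauInverseTriple} to this $\tau^-$-triple, every module $M\in\Mod R$ fits into an exact sequence
\[0\lrt D\lrt F\st{g}\lrt M\]
in which $g$ is a right $\Sub\SU$-approximation and the kernel $D$ lies in $(\Sub\SU)^{\perp_1}$, that is, $D$ is Ext-injective relative to $\Sub\SU$. This is precisely the dual of the datum ($M\st{\phi}\lrt B\lrt C\lrt 0$ with $\phi$ a left $\Fac(\ST)$-approximation and $C$ Ext-projective relative to $\Fac(\ST)$) on which the proof of Proposition~\ref{SupportGenFinendo} rests.

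Finally I would invoke the quasicotilting analogue of \cite[Theorem 3.4]{AMV}. In the silting case that theorem upgrades such a left-approximation presentation of every module into the statement that the ambient torsion class equals $\Gen(T)$ for a finendo quasitilting $T$. Here the corresponding recognition result for quasicotilting modules, available from the theory developed in \cite{ZW} and compatible with Proposition~\ref{ZW}, should turn the right $\Sub\SU$-approximations above into a quasicotilting module $T$ with $\Cogen(T)=\Sub\SU$; since $\Sub\Prod(T)=\Cogen(T)$ this is exactly the desired conclusion.

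The main obstacle I anticipate is locating and citing the correct dual of \cite[Theorem 3.4]{AMV}. The delicate point is matching hypotheses: one must check that the existence, for every $M$, of a right $\Sub\SU$-approximation $g\colon F\lrt M$ whose kernel $D$ satisfies $\Ext^1_R(\Sub\SU, D)=0$ is exactly the condition guaranteeing that the torsion-free class $\Sub\SU$ is cogenerated by a single quasicotilting module. Everything else is a mechanical dualisation, and once the recognition theorem is in hand the argument is immediate.
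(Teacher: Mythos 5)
Your proposal matches the paper's proof essentially step by step: the paper also applies Theorem~\ref{Bijection-Dual} to obtain the $\tau^-$-triple $({}^{\perp_0}\SU, \Sub\SU, (\Sub\SU)^{\perp_1})$, then Proposition~\ref{TauInverseTriple} to get, for every $M$, an exact sequence $0\lrt B\lrt C\st{\phi}\lrt M$ with $\phi$ a right $\Sub\SU$-approximation and $B\in(\Sub\SU)^{\perp_1}$, and finally a recognition theorem for quasicotilting modules. The one reference you were missing is exactly \cite[Theorem 3.5]{ZW}, which is the dual of \cite[Theorem 3.4]{AMV} that you anticipated and closes the argument precisely as you described.
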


\begin{proof}
By Theorem \ref{Bijection-Dual},  $({}^{\perp_0}{\SU}, \Sub\SU, (\Sub\SU)^{\perp_1})$ is a $\tau^-$-triple.  Hence by Proposition \ref{TauInverseTriple}, for every $M\in\Mod R$, there exists an exact sequence
\[0\lrt B\lrt C\st{\phi}\lrt M\]
such that $\phi$ is a right $\Sub \SU$-approximation and $B\in (\Sub\SU)^{\perp_1}$, that is, $B$ is an $\Ext$-injective module in $\Sub\SU$. Hence by \cite[Theorem 3.5]{ZW}, we deduce that there exists a  quasicotilting $R$-module $T$ such that $\Sub \SU=\Cogen(T)$.
\end{proof}

\begin{sdefinition}
Let $\SA$ be an abelian category with enough injective objects. Let $\SU$ and $\SU'$ be two support $\tau^-$-titling subcategories of $\SA$. We say that $\SU$ and $\SU'$ are equivalent if  $\Sub \SU=\Sub \SU'.$
\end{sdefinition}

Now we can state the main theorem of this subsection which is the dual of Theorem \ref{Bijection-Finendo}.
Recall that, by \cite[Page 12]{ZW}, two quasicotilting modules $T_1$ and $T_2$ are equivalent if $\Prod(T_1)=\Prod(T_2)$.

\begin{stheorem}\label{Bijection-Cofinendo}
There is a bijection between equivalence classes of support $\tau^-$-tilting subcategories of $\Mod R$ and equivalence classes of  quasicotilting $R$-modules.
\end{stheorem}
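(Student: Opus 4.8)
The plan is to realize this bijection as the pair of mutually inverse maps dualizing those used in the proof of Theorem~\ref{Bijection-Finendo}. In one direction I would send the equivalence class of a support $\tau^-$-tilting subcategory, represented by $\SU$, to the equivalence class of a quasicotilting module $T$ supplied by Proposition~\ref{TauInverseSupportCogenCofinendo}, so that $\Sub\SU=\Cogen(T)$. In the other direction I would send the equivalence class of a quasicotilting module $T$ to the class of $\Prod(T)$, which is a support $\tau^-$-tilting subcategory by Proposition~\ref{CofinendoImpliesSupport}. The whole statement then reduces to checking that these two assignments descend to equivalence classes and are inverse to one another.

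First I would verify well-definedness. For the reverse map this is immediate: if $\Prod(T_1)=\Prod(T_2)$, then $\Sub\Prod(T_1)=\Sub\Prod(T_2)$, so the two associated support $\tau^-$-tilting subcategories are equivalent. For the forward map the content is that the quasicotilting module attached to $\SU$ is canonical up to $\Prod$-equivalence. If $T$ and $T'$ both satisfy $\Cogen(T)=\Cogen(T')=\Sub\SU$, then, since by Proposition~\ref{ZW}.2 the class $\Prod(T)$ is precisely the class of $\Ext$-injective objects of $\Cogen(T)$, we obtain $\Prod(T)=\Prod(T')$ and hence $[T]=[T']$. The same computation shows the output depends only on $\Sub\SU$, so equivalent subcategories are sent to the same class.

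Next I would check that the maps are mutually inverse, the key identity being $\Sub\Prod(T)=\Cogen(T)$, already noted in the proof of Proposition~\ref{CofinendoImpliesSupport}. Starting from a quasicotilting module $T$, its image $\Prod(T)$ is sent back to some quasicotilting $T''$ with $\Cogen(T'')=\Sub\Prod(T)=\Cogen(T)$; as both $\Prod(T'')$ and $\Prod(T)$ consist of the $\Ext$-injectives of this common torsion-free class, $\Prod(T'')=\Prod(T)$, so $[T'']=[T]$. Starting instead from a support $\tau^-$-tilting subcategory $\SU$, Proposition~\ref{TauInverseSupportCogenCofinendo} gives $T$ with $\Cogen(T)=\Sub\SU$, and then $\Sub\Prod(T)=\Cogen(T)=\Sub\SU$, so $\Prod(T)$ is equivalent to $\SU$.

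The argument is essentially a clean composition of the two dual propositions, so I do not anticipate a serious obstacle; the one genuinely delicate point is the well-definedness of the forward map, namely that the quasicotilting module attached to $\SU$ is unique up to $\Prod$-equivalence. This hinges entirely on recovering $\Prod(T)$ from the torsion-free class $\Cogen(T)$ through $\Ext$-injectivity (Proposition~\ref{ZW}.2), which is the dual of recovering $\Add(T)$ from $\Gen(T)$ and is exactly what makes the invariant $\Sub\SU=\Cogen(T)$ faithful.
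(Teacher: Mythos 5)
Your proposal is correct and follows exactly the paper's route: the paper's proof consists precisely of citing Propositions~\ref{TauInverseSupportCogenCofinendo} and \ref{CofinendoImpliesSupport}, and your argument simply makes explicit the routine details the paper leaves implicit (well-definedness via Proposition~\ref{ZW}.2, which recovers $\Prod(T)$ as the $\Ext$-injectives of $\Cogen(T)$, and mutual inverseness via $\Sub\Prod(T)=\Cogen(T)$). These verifications are accurate and complete the paper's terse proof faithfully.
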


\begin{proof}
The result follows by Propositions \ref{TauInverseSupportCogenCofinendo} and \ref{CofinendoImpliesSupport}.
\end{proof}

\section{Applications to quiver representations}\label{Sec:Quivers}
This section is devoted to produce support $\tau$-tilting and $n$-tilting subcategories of category of representation of quivers, where $n$ is a non-negative integer.
We divide the section in four subsections.
In the first subsection we recall some known definitions and properties of the category $\Rep(Q, \SA)$ of representations of a finite acyclic quiver $Q$ over an abelian category $\SA$ with enough projective objects.
In the second subsection we produce support $\tau$-tilting subcategories in $\Rep(Q, \SA)$ from certain $\tau$-tilting subcategories of $\SA$.
In the third we construct (co)silting modules in $\Mod RQ$ from (co)silting modules in $\Mod R$.
Finally, in the last subsection, we use similar techniques to produce $(n+1)$-tilting subcategories of $\Rep(Q,\SA)$ from $n$-tilting subcategories of $\SA$.

\subsection{Notions on quiver representations}
Let $\SA$ be an abelian category with enough projective objects $\Prj(\SA)$ and  $Q=(Q_0, Q_1)$ be a finite acyclic quiver with vertex set $Q_0$ and arrow set $Q_1$.
An arrow $\alpha\in Q_1$ of sourse $i=s(\alpha)$ and target $j=t(\alpha)$    is usually denoted by $\alpha: i\rt j$.
We denote by $\Rep(Q, \SA)$ the category of representations of $Q$ in $\SA$. An object $X$ in $\Rep(Q, \SA)$  is defined by the following data:
\begin{itemize}
\item[$1.$] To each vertex $i\in Q_0$ is associated an  object $X_i$ in $\SA$.
\item[$2.$] To each arrow $\alpha: i\rt j$ in $Q_1$ is associated a morphism $X_\alpha: X_i\lrt X_j$.
\end{itemize}
A morphism $\varphi: X \lrt Y$ in $\Rep(Q,\SA)$ is a family $\lbrace \varphi_i: X_i\lrt Y_i \rbrace_{ i\in Q_0}$ of morphisms in $\SA$ such that for each arrow $\alpha: i\lrt j$ in $Q_1$, the diagram
\[\xymatrix{
  &X_i\ar[r]^{\varphi_i} \ar[d]^{X_\alpha}&Y_i\ar[d]^{Y_\alpha}\\
 &X_j\ar[r]^{\varphi_j} &Y_j\\
}\]
is commutative.

The category $\Rep(Q, \SA)$ is an abelian category.
Kernels, cokernels, and images in $\Rep(Q,\SA)$ are computed vertex-wise in $\SA$.
In fact, a sequence $X \lrt Y \lrt Z$ in $\Rep(Q,\SA)$ is exact if and only if for every vertex $i \in Q_0$, the sequence $X_i\lrt Y_i\lrt Z_i$ is exact in $\SA$.

For each vertex $i\in Q_0$, there exists the evaluation functor
\[e_i: \Rep(Q, \SA)\lrt \SA\]
\[ \ \ \ \ \ \ \ \ \ \  \ \ \ \ \ \ X\mapsto X_i\]
which sends each representation $X\in\Rep(Q, \SA)$ to the object $X_i\in\SA$ at vertex $i$.
It is clear that the evaluation functor $e_i$ is exact and moreover it has an exact left and also an exact right adjoint, which will be denoted by $e_i^\lambda$ and $e_i^\rho$, respectively.
Let us recall the constructions of $e_i^\lambda,e_i^\rho: \SA\lrt \Rep(Q, \SA)$ more explicitly, \emph{cf.} \cite{HJ}.

Let $A\in\SA$.
Then $e_i^\lambda(A)_j= \bigoplus_{Q(i, j)} A$, where $Q(i, j)$ denotes the set of paths starting in $i$ and ending in $j$.
The morphisms are natural inclusions, that is, for any arrow $\alpha: j\lrt k$, we set $e_i^\lambda(A)_\alpha: \bigoplus_{Q(i, j)} A\lrt \bigoplus_{Q(i, k)} A$.

The right adjoint $e_i^\rho$ is defined dually.
Let $A\in\SA$.
Then $e_i^\rho(A)_j=\bigoplus_{Q(j, i)} A$.
The morphisms are natural projections.
Moreover, the functor $e_i^\rho$ has a right adjoint, which will be denoted by $Re_i^\rho$.

It is proved that the sets
\[\lbrace e_i^\lambda (P): \  i\in Q_0, P\in\Prj(\SA)\rbrace \ \ {\rm and} \ \ \lbrace e_i^{\rho}(I): \ i\in Q_0, I\in\Inj(\SA) \rbrace\]
are sets of projective generators and injective cogenerators for the category $\Rep(Q, \SA)$, respectively.
For details of the proofs see e.g. \cite{EE} and \cite{EER}.

\subsection{Constructing $\tau$-tilting subcategories of $\Rep(Q, \SA)$}
Our aim in this subsection is to provide a systematic technique to construct, starting from a certain $\tau$-tilting subcategory $\ST$ of an abelian category $\SA$, a new $\tau$-tilting subcategory in the category of representation of a finite and acyclic quiver in $\SA$.
For the proof of the main result of this subsection, we need the following lemma.
Although it seems that it is known to the experts, we could not find a reference.
So we provide a proof for the sake of completeness.

\begin{slemma} \label{Keller}
Let $\SA$ be an abelian category with enough projective objects and $Q=(Q_0, Q_1)$ be a finite and acyclic quiver.
Then for every $X, Y \in \Rep(Q, \SA)$ there exists the long exact sequence
\begin{align*}
0 \lrt & \ \Hom_{\SR}(X, Y)\lrt \bigoplus_{r\in Q_0} \Hom_\SA(X_r, Y_r) \st{\varphi}\lrt \bigoplus_{\alpha: r\rt l} \Hom_\SA(X_r, Y_l)  \\
  \lrt & \ \Ext^1_{\SR}(X, Y) \ \lrt \ \bigoplus_{r\in Q_0}\Ext^1_\SA(X_r, Y_r) \ \lrt \bigoplus_{\alpha: r\rt l}\Ext^1_\SA(X_r, Y_l) \\
  \lrt & \ \Ext^2_{\SR}(X, Y) \ \lrt \cdots,
\end{align*}
where here and throughout we set $\SR:=\Rep(Q, \SA)$.
\end{slemma}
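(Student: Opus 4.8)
The plan is to resolve an arbitrary representation $X$ by objects of the form $e_r^{\lambda}(X_r)$ and then transport the entire $\Ext$-computation from $\SR = \Rep(Q, \SA)$ to $\SA$ by means of the adjunction $e_r^{\lambda} \dashv e_r$. The point is that, although $e_r^{\lambda}(X_r)$ is projective only when $X_r$ is, it is still acyclic for $\Hom_{\SR}(-, Y)$ in the sense that its higher $\Ext$-groups into $Y$ coincide with those of $X_r$ into $Y_r$ computed in $\SA$.

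First I would establish the standard presentation of $X$. Summing the counits $e_r^{\lambda} e_r(X) \lrt X$ of the adjunction over all $r \in Q_0$ gives a morphism $\epsilon \colon \bigoplus_{r \in Q_0} e_r^{\lambda}(X_r) \lrt X$, and I claim there is a short exact sequence
\[ 0 \lrt \bigoplus_{\alpha \colon r \rt l} e_l^{\lambda}(X_r) \st{d}\lrt \bigoplus_{r \in Q_0} e_r^{\lambda}(X_r) \st{\epsilon}\lrt X \lrt 0. \]
Here the differential $d$ restricted to the summand $e_l^{\lambda}(X_r)$ indexed by an arrow $\alpha \colon r \rt l$ has two components, namely the inclusion of $X_r$ at the path $\alpha$ into $e_r^{\lambda}(X_r)$ and the map $e_l^{\lambda}(X_{\alpha})$ into $e_l^{\lambda}(X_l)$, taken with opposite signs. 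Exactness is verified vertex-wise: using $e_i^{\lambda}(A)_k = \bigoplus_{Q(i,k)} A$, the middle term at a vertex $k$ is the sum of copies of $X_{s(p)}$ over all paths $p$ ending at $k$, the left term is the same sum restricted to paths of length at least one, $\epsilon_k$ acts by the representation maps $X_p$, and $d_k$ is the usual bar-type differential. Because $Q$ is finite and acyclic, all path sums are finite and of bounded length, so $d_k$ is a (split) monomorphism and the homology at the middle vanishes; this is the step where acyclicity is genuinely used.

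Next I would record that $e_r^{\lambda}$ is exact and preserves projectives, the latter because its right adjoint $e_r$ is exact. Hence, applying $e_r^{\lambda}$ to a projective resolution $P_{\bullet} \lrt A$ in $\SA$ yields a projective resolution $e_r^{\lambda}(P_{\bullet}) \lrt e_r^{\lambda}(A)$ in $\SR$, and the natural adjunction isomorphisms $\Hom_{\SR}(e_r^{\lambda}(P_n), Y) \cong \Hom_{\SA}(P_n, Y_r)$ give, upon taking cohomology,
\[ \Ext^n_{\SR}(e_r^{\lambda}(A), Y) \cong \Ext^n_{\SA}(A, Y_r) \quad \text{for all } n \geq 0. \]

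Finally I would apply the contravariant functor $\Hom_{\SR}(-, Y)$ to the short exact sequence above, producing a long exact $\Ext$-sequence whose terms coming from the middle and left objects are $\Ext^n_{\SR}(\bigoplus_r e_r^{\lambda}(X_r), Y)$ and $\Ext^n_{\SR}(\bigoplus_{\alpha} e_l^{\lambda}(X_r), Y)$. Commuting $\Ext$ past the finite direct sums and substituting the isomorphism of the previous step turns these into $\bigoplus_{r \in Q_0} \Ext^n_{\SA}(X_r, Y_r)$ and $\bigoplus_{\alpha \colon r \rt l} \Ext^n_{\SA}(X_r, Y_l)$, which is exactly the asserted sequence. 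A direct unravelling of $\Hom_{\SR}(d, Y)$ through the adjunction identifies the map $\varphi$ as $(f_r)_r \mapsto (Y_{\alpha} f_r - f_l X_{\alpha})_{\alpha \colon r \rt l}$, whose kernel is precisely $\Hom_{\SR}(X, Y)$, confirming exactness at the second term. I expect the main obstacle to be the first step, that is, writing down the differential $d$ precisely and checking vertex-wise exactness of the standard presentation, since everything afterwards is formal once that short exact sequence and the $\Ext$-isomorphism for $e_r^{\lambda}(A)$ are in hand.
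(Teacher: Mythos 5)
Your proposal is correct and follows essentially the same route as the paper: resolve $X$ by the standard short exact sequence $0 \to \bigoplus_{\alpha: r\rt l} e_l^{\la}(X_r) \to \bigoplus_{r\in Q_0} e_r^{\la}(X_r) \to X \to 0$, apply $\Hom_{\SR}(-,Y)$, and transfer the resulting $\Ext$-terms to $\SA$ via the adjunction $(e_r^{\la}, e_r)$ in all degrees. The only difference is that where the paper cites \cite[Lemma~3.5]{BBOS} for the presentation and \cite[Proposition~5.2]{HJ} for the derived adjunction isomorphism $\Ext^n_{\SR}(e_r^{\la}(A), Y) \cong \Ext^n_{\SA}(A, Y_r)$, you prove both ingredients directly (vertex-wise exactness via the triangular path-length argument, and the $\Ext$-isomorphism via the fact that the exact functor $e_r^{\la}$ preserves projectives), which is a valid self-contained substitute.
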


\begin{proof}
Let $X \in \SR:=\Rep(Q, \SA)$.
By \cite[Lemma 3.5]{BBOS}, there exists a short exact sequence
\[0\lrt \bigoplus_{\alpha:r\rt l} e_l^{\la}(X_r)\lrt \bigoplus _{r\in Q_0} e_r^{\la}(X_r)\lrt X\lrt 0\]
which is natural in $X$.
By applying the functor $\Hom_{\SR}(-,Y)$ to this sequence, we get the following long exact sequence
\begin{align*}
0 \lrt & \ \Hom_{\SR}(X, Y) \lrt \bigoplus_{r\in Q_0}\Hom_{\SR}( e_r^{\la}(X_r), Y)\lrt \bigoplus_{\alpha:r\rt l}\Hom_{\SR}(e_l^{\la}(X_r), Y)\\
  \lrt & \ \Ext^1_{\SR}(X, Y) \ \ \lrt\bigoplus_{r\in Q_0}\Ext^1_\SR( e_r^{\la}(X_r), Y) \ \lrt \bigoplus_{\alpha:r\rt l}\Ext^1_\SR(e_l^{\la}(X_r), Y)\\
  \lrt & \ \Ext^2_{\SR}(X, Y) \ \ \lrt \cdots.
\end{align*}
Now the result follows in view of the adjoint pair $(e_i^{\la}, e_i)$ and using the fact that the adjunction between $e_i^{\la}$ and $e_i$ extends to $\Ext^t$, for all $t \geq i$, see \cite[Proposition 5.2]{HJ}.
\end{proof}

Now we can state and prove the main result of this part.
\begin{stheorem}\label{ProduceTauTilting}
Let $\SA$ be an abelian category with enough projective objects and $Q=(Q_0, Q_1)$ be a finite and acyclic quiver.
Let $\ST$ be a support $\tau$-tilting subcategory of $\SA$ such that $\Fac(\ST)$ is closed with respect to the kernels of epimorphisms.
Then
\[\mathbb{T} =\add\lbrace e_i^{\rho}(T) \vert ~ i\in Q_0, T\in\ST\rbrace\]
is a support $\tau$-tilting subcategory of $\Rep(Q, \SA)$.
\end{stheorem}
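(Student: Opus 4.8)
The plan is to verify the three defining properties of a support $\tau$-tilting subcategory from Definition~\ref{DefTauTilting} for $\mathbb{T}$ directly, transporting the corresponding properties of $\ST$ through the adjoint triple $(e_i^{\lambda}, e_i, e_i^{\rho})$ and the long exact sequence of Lemma~\ref{Keller}. Throughout I write $\SR = \Rep(Q, \SA)$ and use, as in the proof of Lemma~\ref{Keller}, that $e_i$ is exact and that the adjunction $(e_i^{\lambda}, e_i)$ extends to isomorphisms $\Ext^n_{\SR}(e_i^{\lambda}(A), Y) \cong \Ext^n_{\SA}(A, Y_i)$ for all $n$. Since $\add$ and the conditions of Definition~\ref{DefTauTilting} are compatible with finite direct sums and summands, it suffices to test the Ext-vanishing on the generators $e_i^{\rho}(T)$ with $T \in \ST$ and to test the approximation condition on the projective generators $e_r^{\lambda}(P)$ with $P \in \Prj(\SA)$.

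First I would record two structural facts about $\Fac(\mathbb{T})$. Applying the exact functor $e_j$ to an epimorphism $\bigoplus_k e_{i_k}^{\rho}(T_k) \twoheadrightarrow Y$ shows that every $Y \in \Fac(\mathbb{T})$ satisfies $Y_j \in \Fac(\ST)$ for all $j \in Q_0$. Moreover every structure morphism of $e_i^{\rho}(T)$ is a split epimorphism (a natural projection along the paths of $Q$), so every structure morphism $Y_{\alpha}$ of such a $Y$ is an epimorphism. Combined with the hypothesis that $\Fac(\ST)$ is closed under kernels of epimorphisms and the fact that $\Fac(\ST)$ is a torsion class (Corollary~\ref{FunctoriallyFiniteTorsion}), hence closed under extensions and quotients, this forces the kernel of every structure morphism, and of every composite of structure morphisms along a path, to lie again in $\Fac(\ST)$. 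With these facts in hand I would compute $\Ext^1_{\SR}(e_i^{\rho}(T_1), Y)$ for $Y \in \Fac(\mathbb{T})$ via Lemma~\ref{Keller}: the terms $\Ext^1_{\SA}((e_i^{\rho}T_1)_r, Y_r)$ and $\Ext^1_{\SA}((e_i^{\rho}T_1)_r, Y_l)$ are finite sums of groups $\Ext^1_{\SA}(T_1, Y_r)$ and $\Ext^1_{\SA}(T_1, Y_l)$, which vanish because $Y_r, Y_l \in \Fac(\ST)$ and $\Ext^1_{\SA}(\ST, \Fac(\ST)) = 0$. Hence Lemma~\ref{Keller} identifies $\Ext^1_{\SR}(e_i^{\rho}(T_1), Y)$ with the cokernel of the $\Hom$-level map $\bigoplus_{r} \Hom_{\SA}((e_i^{\rho}T_1)_r, Y_r) \to \bigoplus_{\alpha: r \to l} \Hom_{\SA}((e_i^{\rho}T_1)_r, Y_l)$ induced by the structure morphisms, and the task becomes to prove this map surjective.

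The surjectivity I would establish by induction along a topological ordering of the acyclic quiver $Q$: each component of a target class is a morphism from a summand $T_1 \in \ST$ into some $Y_l \in \Fac(\ST)$, and such a morphism lifts along the epimorphic structure morphisms $Y_{\alpha}$ precisely because their kernels lie in $\Fac(\ST)$ and $\Ext^1_{\SA}(T_1, \Fac(\ST)) = 0$. This yields $\Ext^1_{\SR}(\mathbb{T}, \Fac(\mathbb{T})) = 0$. For the approximation condition I would start from the short exact sequence $0 \to \bigoplus_{\alpha: r\to l} e_l^{\lambda}(X_r) \to \bigoplus_r e_r^{\lambda}(X_r) \to X \to 0$ appearing in the proof of Lemma~\ref{Keller}, together with the path-combinatorial coresolution of each projective generator $e_r^{\lambda}(P)$ by objects $e_i^{\rho}(P)$ from \cite[Proposition~3.9]{BBOS}; replacing each $P$ by its $\ST$-approximation sequence $P \st{f}{\lrt} T^0 \lrt T^1 \lrt 0$ and applying the exact functors $e_i^{\rho}$, I would splice these into an exact sequence $e_r^{\lambda}(P) \lrt \mathbb{T}^0 \lrt \mathbb{T}^1 \lrt 0$ with $\mathbb{T}^0, \mathbb{T}^1 \in \mathbb{T}$, the first map being a left $\mathbb{T}$-approximation by the adjunction $\Hom_{\SR}(e_r^{\lambda}(P), e_i^{\rho}(T)) \cong \Hom_{\SA}(P, (e_i^{\rho}T)_r)$ and the fact that $f$ is a left $\ST$-approximation. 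Contravariant finiteness of $\mathbb{T}$ I would obtain by assembling right $\ST$-approximations $T^{(i)} \lrt Y_i$ at the vertices (which exist since $\ST$ is contravariantly finite) into a map out of a suitable object $\bigoplus_i e_i^{\rho}(T^{(i)})$ and correcting the vertex components inductively along the topological order so as to respect the structure morphisms of $Y$.

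I expect the main obstacle to be the surjectivity of the $\Hom$-level map in the Ext-vanishing step: this is exactly the point where the acyclicity of $Q$ and the closure of $\Fac(\ST)$ under kernels of epimorphisms must be combined, and the inductive lifting along paths is the delicate part of the argument. By contrast, the construction and the left-approximation verification of the sequence in the final step, while technically involved, are conceptually routine once the quiver coresolution of \cite[Proposition~3.9]{BBOS} and the exactness of the functors $e_i^{\rho}$ are in hand.
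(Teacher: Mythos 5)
Your overall strategy (Keller's long exact sequence for the Ext-vanishing, transporting the $\ST$-approximation sequence of each projective through the adjoint functors, assembling vertex-wise approximations for contravariant finiteness) is the paper's strategy, and your Ext-vanishing step is in fact slightly more careful than the paper's: the paper only checks arrow-wise surjectivity of $\Hom_\SA(T,Y_r)\lrt\Hom_\SA(T,Y_l)$ and asserts that $\varphi$ is epic, whereas your topological-order induction is the right way to handle vertices with several outgoing arrows (there one needs a simultaneous lift against the combined map $Y_r\lrt\bigoplus_{\alpha\colon r\to\bullet}Y_{t(\alpha)}$, which is epic with kernel in $\Fac(\ST)$ because it is a split epimorphism on objects of $\mathbb{T}$ and hence epic on their quotients). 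However, your contravariant-finiteness step has a genuine gap: you approximate the vertex evaluations $Y_i=e_i(Y)$, but a morphism $e_i^{\rho}(T)\lrt Y$ is \emph{not} classified by a morphism $T\lrt Y_i$; by adjunction it corresponds to a morphism $T\lrt Re_i^{\rho}(Y)$, where $Re_i^{\rho}$ is the right adjoint of $e_i^{\rho}$, and $Re_i^{\rho}(Y)$ differs from $Y_i$ in general. Concretely, for $Q\colon 1\to 2$ one has $e_1^{\rho}(T)=(T\to 0)$, so any morphism $e_1^{\rho}(T)\lrt Y$ has vertex-$1$ component landing in $\Ker(Y_1\to Y_2)=Re_1^{\rho}(Y)$; a right $\ST$-approximation $T^{(1)}\lrt Y_1$ can therefore never survive your ``correction'' into a morphism of representations while remaining the approximation you need, and maps $e_1^{\rho}(T')\lrt Y$ will fail to factor through your assembled map. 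The paper avoids this precisely by taking right $\ST$-approximations $\pi^i\colon T^i\lrt Re_i^{\rho}(X)$ and transporting them through the adjunction $(e_i^{\rho},Re_i^{\rho})$; the missing idea in your proposal is this right adjoint.

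Two smaller points. First, in the left-approximation step you splice the $\ST$-sequence $P\st{f}\lrt T^0\st{g}\lrt T^1\lrt 0$ with the canonical coresolution of $e_i^{\la}(P)$, in the style of the paper's proof of Theorem~\ref{ThHigherTilt}; the paper instead writes down an explicit two-term sequence indexed by the longest paths out of $i$, with extra summands $e_{s(\alpha)}^{\rho}(\Ker g)$. Either route can work, but note that both need the observation (which you omit) that $\Ker g\in\Fac(\ST)$ forces $0\lrt\Ker g\lrt T^0\lrt T^1\lrt 0$ to split, so $\Ker g\in\ST$; without this the cokernel terms produced by your splicing need not lie in $\mathbb{T}$, so the step is not as routine as you suggest. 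Second, your reduction of the Ext-vanishing to generators $e_i^{\rho}(T)$ and quotients $Y$ of finite sums of such is fine and matches the paper.
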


\begin{proof}
We show the validity of conditions of Definition \ref{DefTauTilting}.
For the first condition, let $i, j\in Q_0$ and $T\in \ST$.
We show that
\[\Ext^1_{\SR}(e_i^{\rho}(T), \Fac(e_j^{\rho}(T))=0.\]
Set $X:=e_i^{\rho}(T)$ and pick $Y\in\Fac(e_j^{\rho}(T))$.
Then for every $r\in Q_0$, $X_r$ is the sum of some finite copies of $T$, maybe zero, and $Y_r$ is in $\Fac(T)$.
Moreover, for every $\alpha: r\rt l\in Q_1$, $Y_\alpha: Y_r\lrt Y_l$ is an epimorphism.
Hence since by assumption $\Fac(\ST)$ is closed with respect to the kernels of epimorphisms, we deduce that $\Ker Y_\alpha \in \Fac(T)$.
This in particular implies that for every $T \in \ST$, the induced morphism
\[\Hom_\SA(T, Y_r) \lrt \Hom_\SA(T, Y_l)\]
is an epimorphism.

Hence, in the exact sequence
\[ \bigoplus_{r\in Q_0} \Hom_\SA(X_r, Y_r) \st{\varphi}\lrt \bigoplus_{\alpha: r\rt l} \Hom_\SA(X_r, Y_l) \lrt \\  \Ext^1_{\SR}(X, Y)\lrt \bigoplus_{r\in Q_0}\Ext^1_\SA(X_r, Y_r),\] of the above lemma, we deduce that $\varphi$ is an epimorphism.
So to show the result, it is enough to show that
\[\bigoplus_{r\in Q_0}\Ext^1_\SA(X_r, Y_r)=0.\]
This follows from the fact $\Ext^1_\SA(\ST, \Fac(\ST))$, because $\ST$ is a support $\tau$-tilting subcategory of $\SA$ and the fact that
\[\bigoplus_{r\in Q_0}\Ext^1_\SA(X_r, Y_r)\subseteq\Ext^1_\SA(\ST, \Fac(\ST)).\]

Now we show the validity of the second condition.
It is enough to show it only for the projective generators of $\Rep(Q, \SA)$, i.e. for representations of the form
$e_i^\la(P)$, where $P$ is a projective object in $\SA$.
Let $\lbrace \rho_1, \cdots, \rho_k \rbrace$ be the set of all longest paths in $Q$ starting from $i$.
Since $\ST$ is a support $\tau$-tilting subcategory of $\SA$, for $P$ there exists an exact sequence $P\st{f}\lrt T^0\st{g}\lrt T^1\lrt 0$, such that $T^0, T^1\in\ST$ and $f$ is a left $\ST$-approximation of $P$.
Since $\Fac(\ST)$ is closed with respect to the kernels of epimorphisms, $\Ker g \in \Fac(\ST)$ and so the induced short exact sequence
\[0\lrt \Ker g\lrt T^0\lrt T^1\lrt 0\]
splits and so $\Ker g\in\ST$.
Take the exact sequence
\[e^{\la}_{i}(P)\st{\psi}\lrt  \bigoplus_{q=1}^ k e^\rho_{t(\rho_q)}(T^0)\lrt  \bigoplus_{q=1}^k e^\rho_{t(\rho_q)}(T^1)\oplus  \bigoplus_{\alpha \in \mathfrak{I} }e^\rho_{s(\alpha)}(\Ker g)\lrt 0\]
where $\mathfrak{I}\subset Q_1$ is the set of arrows $\alpha$ of $Q$ such that $\alpha$ is not part of any path in the set $\lbrace \rho_1, \cdots, \rho_k \rbrace$ but there is a path in that set passing through $t(\alpha)$.
An easy verification shows that this is the desired sequence for $e^\la_i(P)$.
In particular, $\psi$ is a left $\mathbb{T}$-approximation of $e^{\la}_{i}(P)$.

In order to complete the proof, we have to show that $\mathbb{T}$ is a contravariantly finite subcategory of $\Rep(Q, \SA)$.
Let $X \in\Rep(Q, \SA)$. For each $i$, consider a right $\ST$-approximation $\pi^i: T^i \lrt Re_i^\rho(X)$, where $Re_i^\rho$ is the right adjoint of $e_i^\rho$.
Following the same argument as in \cite[Proposition 3.9]{BBOS} one can show that \[\bigoplus_{i\in Q_0} e_i^\rho(T^i) \lrt X\] is a right $\mathbb{T}$-approximation of $X$.
\end{proof}

Following examples provide situations where a support $\tau$-tilting subcategory of $\SA$ has the property that $\Fac(\ST)$ is closed with respect to the kernels of epimorphisms.

\begin{sexample}
Let $\La$ be an artin algebra and $S$ be a simple injective object in $\mmod\La$.
Then $\add(S)$ is a  support $\tau$-tilting subcategory such that $\Fac(S)=\add(S)$ is closed under kernels of epimorphisms.
\end{sexample}

\begin{sexample}
Let $A$ be a finite dimensional algebra, $e$ be an idempotent of $A$ and $B=A/AeA$.
Let $Q=(Q_0, Q_1)$ be a finite and acyclic quiver.
By \cite[Proposition 3.9]{BBOS}, we have $\ST=\add (\lbrace e_i^\rho(B) \vert i\in Q_0\rbrace)$  is a tilting subcategory in $\Rep(Q, \mmod B)$.
On the other hand, it is obvious that  $\add(B)$ is a support $\tau$-tilting subcategory of $\mmod A$ and $\Fac(\add(B))=\mmod B$ is closed under kernels of epimorphisms.
So by Theorem \ref{ProduceTauTilting}, $\ST$ is a support $\tau$-tilting of $\Rep(Q, \mmod A)$.
\end{sexample}

We end this subsection by the following example which is also an application of the Theorem \ref{ProducingTauTiltingFromTiltingRep}.

\begin{sexample}
Let $Q$ be a finite and acyclic quiver and $Q'$ be a full subquiver of $Q$.
Then it is immediate that $\Rep(Q', \SA)$ is a wide and functorially finite torsion class of $\Rep(Q, A)$. By \cite[Proposition 3.9]{BBOS}, $\ST=\add(\lbrace e_i^\rho (P)\ \vert i\in Q'_0,  P\in \Prj(\SA)\rbrace)$ is a tilting subcategory of $\Rep(Q', \SA)$.
So by Theorem \ref{ProducingTauTiltingFromTiltingRep}, $\ST$ is a support $\tau$-tilting subcategory of $\Rep(Q, \SA)$.
\end{sexample}

\subsection{(Co)silting objects in $\Mod RQ$}
Let $R$ be an associative ring with unity and $Q$ be a finite and acyclic quiver. In this subsection we construct silting, resp. cosilting, objects in the category of representations of $Q$ in $\Mod R$, $\Rep(Q, \Mod R)$, from silting, resp. cosilting, modules in $\Mod R$.
Note that $\Rep(Q, \Mod R)$ is equivalent to the $\Mod RQ$, where $RQ$ denotes the path algebra of $Q$ over $R$.
So by a silting, resp. cosilting, object in $\Rep(Q, \Mod R)$ we mean a silting, resp. cosilting, module in $\Mod RQ$.

\begin{stheorem}
Let $Q=(Q_0, Q_1)$ be a finite and acyclic quiver.
\begin{itemize}
\item[$(i)$]
Let $S$ be a silting module in $\Mod R$. Let $i\in Q_0$ be an arbitrary vertex of $Q$. Then $e_i^\la(S)$ is a silting object in $\Rep(Q, \Mod R)$.
\item[$(ii)$]
Let $C$ be a cosilting module in $\Mod R$. Let $i\in Q_0$ be an arbitrary vertex of $Q$. Then $e_i^\rho(C)$ is a cosilting object in $\Rep(Q, \Mod R)$.
\end{itemize}
\end{stheorem}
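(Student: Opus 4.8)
The plan is to feed the defining criterion for silting through the exact functor $e_i^{\la}$, in the same spirit that Lemma~\ref{Keller} exploits the adjoint pair $(e_i^{\la},e_i)$. I will carry out $(i)$ in detail; part $(ii)$ is the formal dual, with $e_i^{\la}$ replaced by $e_i^{\rho}$, projectives by injectives, $\Gen$ by $\Cogen$, and $\SD_\sigma$ by $\SB_\zeta$. Since the evaluation $e_i$ is exact, its left adjoint $e_i^{\la}$ is exact and carries projectives to projectives (dually $e_i^{\rho}$ is exact and preserves injectives). Thus, choosing a projective presentation $\sigma\colon P_1\lrt P_0$ of $S$ witnessing silting, so that $\Gen(S)=\SD_\sigma$, and applying the exact functor $e_i^{\la}$, I obtain an exact sequence $e_i^{\la}(P_1)\lrt e_i^{\la}(P_0)\lrt e_i^{\la}(S)\lrt 0$ with projective outer terms; that is, $e_i^{\la}(\sigma)$ is a projective presentation of $e_i^{\la}(S)$, the natural candidate to witness that $e_i^{\la}(S)$ is silting.

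First I would pin down $\SD_{e_i^{\la}(\sigma)}$. The adjunction isomorphism $\Hom_{\Rep(Q,\Mod R)}(e_i^{\la}(P),Y)\cong\Hom_R(P,Y_i)$ is natural in $P$, so it carries $\Hom(e_i^{\la}(\sigma),Y)$ onto $\Hom_R(\sigma,Y_i)$, whence
\[\SD_{e_i^{\la}(\sigma)}=\{\,Y\in\Rep(Q,\Mod R)\ \vert\ Y_i\in\SD_\sigma=\Gen(S)\,\}.\]
As $e_i$ is exact and preserves coproducts and $\Gen(S)$ is a torsion class, this preimage is again a torsion class; and since $Q$ is acyclic the only path from $i$ to $i$ is trivial, so $e_i^{\la}(S)_i=S\in\Gen(S)$ and hence $e_i^{\la}(S)\in\SD_{e_i^{\la}(\sigma)}$. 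This already exhibits $e_i^{\la}(S)$ as a partial silting object.

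The decisive step is to upgrade this to genuine silting by establishing $\Gen(e_i^{\la}(S))=\SD_{e_i^{\la}(\sigma)}$. The inclusion $\subseteq$ is immediate: applying the exact functor $e_i$ to any epimorphism $e_i^{\la}(S)^{(J)}\twoheadrightarrow Y$ and using $e_i e_i^{\la}(S)=S$ gives $Y_i\in\Gen(S)$. The reverse inclusion is where I expect the real difficulty to lie: one must show that every representation $Y$ with $Y_i\in\Gen(S)$ is generated by copies of $e_i^{\la}(S)$, and the subtle point is to control $Y$ at the vertices other than $i$ by assembling an epimorphism out of the structure maps of $Y$ along the paths emanating from $i$. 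Here I would expect to use that $\Gen(S)$ is closed under kernels of epimorphisms — automatic since $S$ is finendo quasitilting, cf.\ Proposition~\ref{AMV}.1 — together with a longest-path bookkeeping over $Q$ in the spirit of the approximation construction in the proof of Theorem~\ref{ProduceTauTilting}. Finally, part $(ii)$ runs dually: $e_i^{\rho}(\zeta)$ is an injective copresentation of $e_i^{\rho}(C)$, the adjunction $\Hom(Y,e_i^{\rho}(I))\cong\Hom_R(Y_i,I)$ yields $\SB_{e_i^{\rho}(\zeta)}=\{\,Y\mid Y_i\in\Cogen(C)\,\}$, and the crux is again the identification $\Cogen(e_i^{\rho}(C))=\SB_{e_i^{\rho}(\zeta)}$.
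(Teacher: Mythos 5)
Up to the point you yourself flag, your argument is correct and coincides with the paper's own proof: the exactness of $e_i^{\la}$ and preservation of projectives, the identification via the adjunction $(e_i^{\la},e_i)$ of $\SD_{e_i^{\la}(\sigma)}$ as $\lbrace Y\in\Rep(Q,\Mod R)\ \vert\ Y_i\in\Gen(S)\rbrace$, and the inclusion $\Gen(e_i^{\la}(S))\subseteq\SD_{e_i^{\la}(\sigma)}$ are exactly the paper's steps. But what you call ``the decisive step'', the reverse inclusion $\SD_{e_i^{\la}(\sigma)}\subseteq\Gen(e_i^{\la}(S))$, is never proved in your proposal; you only name the tools you expect to use (Proposition~\ref{AMV}.1 and a longest-path bookkeeping as in Theorem~\ref{ProduceTauTilting}). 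Since everything before that point only establishes that $e_i^{\la}(S)$ is \emph{partial} silting, the proposal as written does not prove the theorem: the entire content of the statement is concentrated in the step you deferred. That is a genuine gap.

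Moreover, your own clean description of $\SD_{e_i^{\la}(\sigma)}$ makes visible that this step cannot be carried out: that class constrains a representation only at the vertex $i$, whereas membership in $\Gen(e_i^{\la}(S))$ constrains every vertex. Concretely, let $R=k$ be a field, $Q\colon 1\rt 2$, $i=1$ and $S=R$, which is silting with witnessing presentation $\sigma\colon 0\rt R$. Then $\SD_{e_1^{\la}(\sigma)}$ is all of $\Rep(Q,\Mod R)$, while for the simple representation $S_2=(0\rt k)$ the adjunction gives $\Hom(e_1^{\la}(R),S_2)\cong\Hom_R(R,(S_2)_1)=0$, so $S_2\notin\Gen(e_1^{\la}(R))$; no bookkeeping along paths can produce an epimorphism that does not exist. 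In fact $e_1^{\la}(R)=P_1=(k\st{{\rm id}}{\lrt}k)$ is not silting with respect to \emph{any} presentation: since $P_1$ is projective, every projective presentation of it is, up to a split-exact summand, of the form $(0\rt P_1)\oplus(Q'\rt 0)$ with $Q'$ projective, so $\SD_\sigma$ is always cut out by the conditions $Y_1=0$ and/or $Y_2=0$ and never equals $\Gen(P_1)=\lbrace Y\ \vert\ Y_\alpha\mbox{ surjective}\rbrace$. For comparison, the paper's own proof stumbles at the very same spot: after deducing $X_i\in\Gen(S)$ it invokes the epimorphism $\bigoplus_{j\in Q_0}e_j^{\la}(X_j)\lrt X$ of \cite[Lemma 3.5]{BBOS} and the asserted equality $e_i^{\la}(\Gen(S))=\Gen(e_i^{\la}(S))$, an argument that controls only the vertex $i$ and tacitly assumes the summands $e_j^{\la}(X_j)$ for $j\neq i$ are generated by $e_i^{\la}(S)$. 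So you located the crux correctly, but the honest conclusion is that the inclusion is false in general --- with $i$ a source part $(i)$ fails, and dually part $(ii)$ fails for $i$ a sink, which is consistent with the source/sink hypotheses in the paper's partial converse --- not that it awaits a finer approximation argument.
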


\begin{proof}
$(i)$ Let $S$ be a silting $R$-module. By definition, there exists a projective presentation
\[P_1\st{\sigma} \lrt P_0\lrt S\lrt 0\]
of $S$ such that $\SD_\sigma=\Gen(S)$. By applying the exact functor $e_i^\la$ on the projective presentation of $S$ and using the fact that $e_i^\la$ is an exact functor that preserves projectives, we get the projective presentation
\[e_i^\la(P_1)\st{e_i^\la(\sigma)}\lrt e_i^\la(P_0)\lrt e_i^\la(S)\lrt 0\]
of $e_i^\la(S)$. To complete the proof, we show that $\SD_{e_i^\la(\sigma)}=\Gen(e_i^\la(S))$.

Let $X \in \SD_{e_i^\la(\sigma)}$. So there exists an epimorphism
\[\Hom_\SR(e_i^\la(P_0), X)\lrt \Hom_\SR(e_i^\la(P_1), X)\lrt 0,\]
where $\SR$ means $\Rep(Q, \Mod R)$. The adjoint pair $(e_i^\la, e_i)$ induces the epimorphism
\[\Hom_R(P_0, X_i)\lrt \Hom_R(P_1, X_i)\lrt 0.\]
This, in turn, implies that $X_i \in \SD_\sigma=\Gen(S)$. Therefore, $e_i^\la(X_i) \in e_i^\la(\Gen(S))$. On the other hand, by \cite[Lemma 3.5]{BBOS}, there exists an epimorphism $\bigoplus_{i\in Q_0} e_i^\la(X_i)\lrt X\lrt  0$ which shows that $X\in e_i^\la(\Gen(S))$. But it follows directly from the definition of $e_i^\la$ that $e_i^\la(\Gen(S)) = \Gen(e_i^\la(S))$. Thus $\SD_{e_i^\la(\sigma)}\subseteq \Gen(e_i^\la(S))$.

To see the reverse inclusion, let $X\in e_i^\la(\Gen(S))$. So $X=e_i^\la(U)$ such that $U\in\Gen(S)$. Since $\Gen(S)=\SD_\sigma$, we have $U\in\SD_\sigma$. Therefore, there exists an epimorphism
\[\Hom_R(P_0, U)\lrt \Hom_R(P_1, U)\lrt 0.\]
By the using of adjoint properties of adjoint pair $(e_i^\la, e_i)$, we have an epimorphism
\[\Hom_\SR(e_i^\la(P_0), X)\lrt \Hom_\SR(e_i^\la(P_1), X)\lrt 0\]
which shows that $X\in\SD_{e_i^\la(\sigma)}$. So we show that $e_i^\la(\Gen(S))= \Gen(e_i^\la(S)) \subseteq \SD_{e_i^\la(\sigma)}$. Hence the proof is complete.

$(ii)$ The proof is just dual of the proof of part $(i)$, so we skip the proof.
\end{proof}

The following result provides a partial converse to the above theorem. Recall that a vertex $i\in Q_0$ is called a source, resp. a sink, of $Q$ if there is no arrows $\alpha \in Q_1$ such that $t(\alpha)=i$, resp. $s(\alpha)=i$.

\begin{stheorem}
Let $Q=(Q_0, Q_1)$ be a finite and acyclic quiver.
\begin{itemize}
\item[$(i)$]
Let $X$ be a silting  object in $\Rep(Q, \Mod R)$. Then $e_i(X)$ is a silting module in $\Mod R$, provided $i\in Q_0$ is a source of $Q$.
\item[$(ii)$]
Let $Y$ be a cosilting object  in $\Rep(Q, \Mod R)$. Then $e_i(Y)$ is a cosilting  module in $\Mod R$, provided $i\in Q_0$ is a sink of $Q$.
\end{itemize}
\end{stheorem}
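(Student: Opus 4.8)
The plan is to establish part $(i)$ in detail and to deduce part $(ii)$ by the evident dualization, so I focus on $(i)$; write $\SR:=\Rep(Q,\Mod R)$ and $S:=e_i(X)$. Since $X$ is silting there is a projective presentation $\mathbb{P}_1\st{\Sigma}{\lrt}\mathbb{P}_0$ of $X$ in $\SR$ with $\SD_\Sigma=\Gen_\SR(X)$. My first step is to push this presentation through $e_i$: as $e_i$ is exact and sends projectives of $\SR$ to projectives of $\Mod R$ (each $e_i(e_j^{\la}(P))=\bigoplus_{Q(j,i)}P$ is projective, and $e_i$ preserves coproducts and summands), the sequence $e_i(\mathbb{P}_1)\st{\sigma}{\lrt}e_i(\mathbb{P}_0)\lrt S\lrt 0$ with $\sigma:=e_i(\Sigma)$ is a projective presentation of $S$. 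It then remains to prove $\SD_\sigma=\Gen(S)$.

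The second step is a formal identification of $\SD_\sigma$ via the adjunction $(e_i,e_i^{\rho})$: for every $M\in\Mod R$ we have $\Hom_R(e_i(\mathbb{P}_k),M)\cong\Hom_\SR(\mathbb{P}_k,e_i^{\rho}(M))$, whence $\Hom_R(\sigma,M)$ is surjective iff $\Hom_\SR(\Sigma,e_i^{\rho}(M))$ is, i.e.
\[\SD_\sigma=\{\,M\in\Mod R\ \vert\ e_i^{\rho}(M)\in\SD_\Sigma=\Gen_\SR(X)\,\}.\]
One inclusion, $\SD_\sigma\subseteq\Gen(S)$, needs no hypothesis on $i$: from an epimorphism $X^{(J)}\lrt e_i^{\rho}(M)$ I apply the exact, coproduct-preserving functor $e_i$ and use the counit isomorphism $e_ie_i^{\rho}\cong\id$ (the only path $i\to i$ in an acyclic quiver is the trivial one) to obtain an epimorphism $S^{(J)}\lrt M$.

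The crux, and the place where the \emph{source} condition is essential, is the reverse inclusion $\Gen(S)\subseteq\SD_\sigma$. Given an epimorphism $S^{(J)}\lrt M$, I apply $e_i^{\rho}$, which is exact and preserves all coproducts because each $e_i^{\rho}(A)_j=\bigoplus_{Q(j,i)}A$ is a finite sum; this gives an epimorphism $\bigl(e_i^{\rho}(S)\bigr)^{(J)}\lrt e_i^{\rho}(M)$, so everything reduces to showing $e_i^{\rho}(e_i(X))\in\Gen_\SR(X)$. Here being a source is exactly what is needed: if $i$ is a source then no non-trivial path ends at $i$, so $Q(j,i)=\emptyset$ for $j\neq i$ and $e_i^{\rho}(A)$ is the stalk representation concentrated at $i$ with value $A$. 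Thus $e_i^{\rho}(e_i(X))$ is the stalk at $i$ with value $X_i$, and the map that is $\id_{X_i}$ at $i$ and zero elsewhere is a well-defined epimorphism $X\twoheadrightarrow e_i^{\rho}(e_i(X))$ (the commutation conditions are vacuous at $i$ since no arrow targets a source). Hence $e_i^{\rho}(e_i(X))$ is a quotient of $X$, so it lies in $\Gen_\SR(X)$; as $\Gen_\SR(X)$ is closed under coproducts and quotients we get $e_i^{\rho}(M)\in\Gen_\SR(X)$, i.e. $M\in\SD_\sigma$. Combining the two inclusions yields $\SD_\sigma=\Gen(S)$, so $S=e_i(X)$ is silting.

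For part $(ii)$ I would dualize everything: replace $e_i^{\rho}$ by $e_i^{\la}$, projectives by injectives, $\Gen$ by $\Cogen$ and $\SD$ by $\SB$. Starting from an injective copresentation $\mathbb{I}^0\st{\zeta}{\lrt}\mathbb{I}^1$ of $Y$ in $\SR$ with $\SB_\zeta=\Cogen_\SR(Y)$, the functor $e_i$ (exact, product-preserving, and sending injectives to injectives since $e_i(e_j^{\rho}(I))=\bigoplus_{Q(i,j)}I$) carries $\zeta$ to an injective copresentation of $e_i(Y)$, and the adjunction $(e_i^{\la},e_i)$ gives $\SB_{e_i(\zeta)}=\{\,M\ \vert\ e_i^{\la}(M)\in\Cogen_\SR(Y)\,\}$. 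The unit isomorphism $e_ie_i^{\la}\cong\id$ settles one inclusion, while for the other, when $i$ is a \emph{sink} no non-trivial path starts at $i$, so $e_i^{\la}(e_i(Y))$ is the stalk at $i$ with value $Y_i$, which embeds into $Y$ and hence lies in $\Cogen_\SR(Y)$; using that $e_i^{\la}$ is exact and preserves products and that $\Cogen_\SR(Y)$ is closed under subobjects and products, one concludes $\SB_{e_i(\zeta)}=\Cogen(e_i(Y))$, so $e_i(Y)$ is cosilting. The single genuine obstacle in both parts is the stalk identity $e_i^{\rho}(e_i(X))\in\Gen_\SR(X)$ (resp. $e_i^{\la}(e_i(Y))\in\Cogen_\SR(Y)$); the only other point demanding care is the preservation of the relevant (co)products, which is what lets me pass from a single (co)generator to an arbitrary (co)product.
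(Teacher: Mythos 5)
Your proof is correct and follows essentially the same route as the paper: apply the exact, projective-preserving functor $e_i$ to the silting presentation, use the adjunction $(e_i, e_i^{\rho})$ together with the counit isomorphism $e_ie_i^{\rho}\cong \id$ to identify $\SD_{\sigma_i}$, and exploit the source (resp.\ sink) condition through stalk representations, dualizing for $(ii)$. The only cosmetic difference is that you reduce the reverse inclusion to the single claim $e_i^{\rho}(e_i(X))\in\Gen_{\SR}(X)$ via the exactness and coproduct-preservation of $e_i^{\rho}$, whereas the paper tests an arbitrary $Y_i\in\Gen(X_i)$ against the stalk representation concentrated at $i$ --- the same observation, and you in fact supply the detail (no arrow ends at a source, so the projection onto the stalk is a morphism) that the paper dismisses with ``it follows easily''.
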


\begin{proof}
$(i)$ Since $X$ is a silting object in $\Rep(Q, \Mod R)$, there exists a projective presentation
\[P^1\st{\sigma}\lrt P^0\lrt X\lrt 0\]
of $X$ such that $\SD_\sigma=\Gen(X)$. By applying the exact functor $e_i$ on the this exact sequence we get the exact sequence
\[P^1_i\st{\sigma_i}\lrt P^0_i\lrt X_i\lrt 0,\]
which is a projective presentation of $e_i(X)=X_i$.

In order to complete the proof, we have to show that $\SD_{\sigma_i}=\Gen(X_i)$.  First let $M\in\SD_{\sigma_i}$. Then there exists an epimorphism
\[\Hom_R( P^0_i, M)\lrt \Hom_R( P^1_i, M)\lrt 0.\]
By using the   adjoint pair $(e_i, e_i^\rho)$, we get the epimorphism
\[\Hom_\SR(P^0, e_i^\rho(M))\lrt \Hom_\SR(P^1, e_i^\rho(M)\lrt 0\]
where $\SR$ means $\Rep(Q, \Mod R)$.  So $e_i^\rho(M)\in \SD_\sigma=\Gen(X)$. Therefore $M=e_i e_i^\rho(M)\in e_i(\Gen(X))$. But $e_i(\Gen(X))=\Gen(e_i(X))=\Gen(X_i)$. Hence $\SD_{\sigma_i}\subseteq\Gen(X_i)$.

For the reverse inclusion, let $Y_i\in\Gen(X_i)$.  We define $Y\in \Rep(Q, \Mod R)$  by setting $Y_i$ at source vertex $i$, and $0$ elsewhere. Since $i$ is a source, it follows easily that $Y \in \Gen(X)$. Therefore $Y \in \SD_\sigma$ which implies that $Y_i \in \SD_{\sigma_i}$. Hence we have the equality $\SD_{\sigma_i}=\Gen(X_i)$ and the proof is complete.

$(ii)$ The proof is just dual of the proof of part $(i)$. So we skip the proof.
\end{proof}

\subsection{Higher tilting subcategories of $\Rep(Q, \SA)$}
Let $\SA$ be an abelian category with enough projective objects.
Let $\Prj(\SA)$ denote the subcategory of $\SA$ consisting of all projective objects.
In \cite[Proposition 3.9]{BBOS} it is shown that
\[\ST'=\add(\lbrace e_i^{\rho}(P) \vert ~ i\in Q_0,  P \in \Prj(\SA) \rbrace)\]
is a ($1$-)tilting subcategory of $\Rep(Q, \SA)$, where $Q$ is a finite and acyclic quiver.

Now if we interpret $\Prj(\SA)$ as a $0$-tilting subcategory of $\SA$, then by the above result, starting from a $0$-tilting subcategory of $\SA$ we get a $1$-tilting subcategory of $\Rep(Q,\SA)$.
In our next and last result we provide a higher version of this result by constructing an $(n+1)$-tilting subcategory in $\Rep(Q,\SA)$ starting from an $n$-tilting subcategory of $\SA$.

\begin{stheorem}\label{ThHigherTilt}
Let $\SA$ be an abelian category with enough projective objects.
Let $n$ be a non-negative integer. Let $Q$ be a finite and acyclic quiver.
For an $n$-tilting subcategory $\ST$ of $\SA$ put
\[\ST' = \add(\lbrace e_i^{\rho}(T) \vert ~ i\in Q_0, T\in\ST\rbrace).\]
Then $\ST'$ is an $(n+1)$-tilting subcategory of $\Rep(Q, \SA)$.
\end{stheorem}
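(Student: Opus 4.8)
The plan is to verify the four conditions of Definition~\ref{HigherDef(BBOS)} for $\ST'$ inside $\SR := \Rep(Q,\SA)$, exploiting the two adjunctions $(e_i, e_i^{\rho})$ and $(e_i^{\rho}, Re_i^{\rho})$ together with the long exact sequence of Lemma~\ref{Keller}; note $\ST'$ is additively closed by construction. The first thing I would record is that $e_j$ preserves projectives: it is a left adjoint of the exact functor $e_j^{\rho}$, so $\Hom_{\SA}(e_j(-),-)\cong\Hom_{\SR}(-,e_j^{\rho}(-))$ shows $e_j$ sends projectives to projectives. Applying $e_j$ to a projective resolution of an arbitrary $Z\in\SR$ thus yields a projective resolution of $Z_j$, and the $\Hom$-adjunction upgrades to
\[\Ext^t_{\SR}(Z, e_j^{\rho}(A)) \cong \Ext^t_{\SA}(Z_j, A)\qquad(t\geq 0,\ A\in\SA),\]
the $(e_i,e_i^{\rho})$-analogue of the fact used in Lemma~\ref{Keller} (compare \cite[Proposition 5.2]{HJ}). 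This settles condition $(ii)$ immediately: for $T,T'\in\ST$ it gives $\Ext^t_{\SR}(e_i^{\rho}(T),e_j^{\rho}(T'))\cong\Ext^t_{\SA}(\bigoplus_{Q(j,i)}T,T')$, which vanishes for $t\geq 1$ since $\ST$ satisfies condition $(ii)$ in $\SA$, and additivity of $\Ext$ propagates the vanishing to all of $\ST'$.

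Condition $(iii)$ I would deduce from Lemma~\ref{Keller}. For $X=e_i^{\rho}(T)$ each $X_r=\bigoplus_{Q(r,i)}T$ lies in $\add(\ST)$, hence has projective dimension at most $n$ in $\SA$; so in the long exact sequence both terms $\bigoplus_{r}\Ext^t_{\SA}(X_r,Y_r)$ and $\bigoplus_{\alpha}\Ext^{t-1}_{\SA}(X_r,Y_l)$ flanking $\Ext^t_{\SR}(X,Y)$ vanish once $t\geq n+2$, whence $\pd_{\SR}(e_i^{\rho}(T))\leq n+1$, and the same bound holds throughout $\ST'$. Condition $(i)$ is handled exactly as in \cite[Proposition 3.9]{BBOS} and Theorem~\ref{ProduceTauTilting}: given $X\in\SR$, pick for each $i$ a right $\ST$-approximation $\pi^i\colon T^i\lrt Re_i^{\rho}(X)$ (possible as $\ST$ is contravariantly finite), and use the adjunction $(e_i^{\rho}, Re_i^{\rho})$ to check that the induced $\bigoplus_i e_i^{\rho}(T^i)\lrt X$ is a right $\ST'$-approximation.

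The real work, and the step I expect to be the main obstacle, is condition $(iv)$: for every projective $P$ of $\SR$ I must produce an exact coresolution $0\lrt P\lrt D^0\lrt\cdots\lrt D^{n+1}\lrt 0$ with $D^k\in\ST'$. By additivity it suffices to treat $P=e_i^{\la}(P')$ with $P'\in\Prj(\SA)$, and my plan is to splice two ingredients. First, the case $n=0$ is precisely \cite[Proposition 3.9]{BBOS}, giving a short exact sequence $0\lrt e_i^{\la}(P')\lrt E^0\st{\delta}{\lrt}E^1\lrt 0$ with $E^0,E^1\in\add\{e_j^{\rho}(Q')\mid j\in Q_0,\ Q'\in\Prj(\SA)\}$. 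Second, applying the exact functor $e_j^{\rho}$ to the $n$-tilting coresolution of each projective summand yields $\ST'$-coresolutions $0\lrt E^s\lrt C_s^0\lrt\cdots\lrt C_s^n\lrt 0$ for $s=0,1$. I would then lift $\delta$ to a chain map $\widetilde{\delta}\colon C_0^{\bullet}\lrt C_1^{\bullet}$; the obstruction at each stage lies in $\Ext^1_{\SR}$ from a cosyzygy of $C_0^{\bullet}$ into a term of $C_1^{\bullet}$, and the $\Ext$-adjunction above reduces it to $\Ext^1_{\SA}(W,T')$, where $W$ is an intermediate cosyzygy of a projective in its $n$-tilting coresolution and $T'\in\ST$. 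A dimension shift along that coresolution, using $\Ext^{\geq 1}_{\SA}(\ST,\ST)=0$, shows every such group vanishes, so $\widetilde{\delta}$ exists. Finally, since $\delta$ is an epimorphism with kernel $P$, the $(-1)$-shifted mapping cone of $\widetilde{\delta}$ is a complex of $\ST'$-objects concentrated in degrees $0,\dots,n+1$ that is quasi-isomorphic to $P$; reading off its cohomology gives the required sequence $0\lrt P\lrt C_0^0\lrt C_0^1\oplus C_1^0\lrt\cdots\lrt C_1^n\lrt 0$. The delicate points are the vanishing of the lifting obstructions and the bookkeeping identifying the cone with a genuine coresolution; an explicit combinatorial construction in the spirit of the sequence built in Theorem~\ref{ProduceTauTilting}, spreading the terms $T^k$ over the longest paths of $Q$ with correction terms, is an alternative, more hands-on route to the same coresolution.
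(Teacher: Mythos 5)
Your proposal is correct and follows essentially the same route as the paper's proof: conditions $(i)$--$(iii)$ via the adjunctions and the canonical short exact sequence of \cite[Lemma 3.5]{BBOS} (your use of Lemma~\ref{Keller} for the projective dimension bound is just a repackaging of that sequence), and condition $(iv)$ by coresolving the two outer terms of $0\lrt e_i^{\la}(P)\lrt \bigoplus_j e_j^{\rho}(e_i^{\la}(P)_j)\lrt \bigoplus_{\alpha} e_t^{\rho}(e_i^{\la}(P)_j)\lrt 0$ with $\ST'$-objects, lifting the map to a chain map via the vanishing of the $\Ext^1$-obstructions (reduced by adjunction to $\Ext$-vanishing in $\SA$), and extracting the coresolution from the mapping cone. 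This is precisely the paper's diagram argument with its dotted maps and cone, so no further comparison is needed.
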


\begin{proof}
We show the validity of the conditions of Definition \ref{HigherDef(BBOS)}. The validity of Condition $(i)$ follows by the similar argument in the proof of Theorem \ref{ProducingTauTiltingFromTiltingRep}.
For the validity of Condition $(ii)$, we have to show that for all $t\geq 1$ and $T_1, T_2\in\ST$,
\[\Ext^t_{\Rep(Q, A)}(e_i^{\rho}(T_1), e_j^{\rho}(T_2))=0.\]
But Proposition 5.2 of \cite{HJ} implies that
\[\Ext^t_{\Rep(Q, A)}(e_i^{\rho}(T_1), e_j^{\rho}(T_2))\cong\Ext^t_\SA(e_i^{\rho}(T_1)_j, T_2).\]
Now $e_i^{\rho}(T_1)_j$ is either zero or a sum of copies of $T_1$ and $\ST$ is an $n$-tilting subcategory, hence
\[\Ext^t_\SA(e_i^{\rho}(T_1)_j, T_2)=0.\]
Thus it follows that,\[\Ext^t_{\Rep(Q, A)}(e_i^{\rho}(T_1), e_j^{\rho}(T_2))=0.\]
For the Condition $(iii)$, we have to show that every object in $\ST'$ has projective dimension at most $n+1$. To this end, it is enough to show this fact for an additive generator $e_i^\rho(T)$ of $\ST'$, for some $i \in Q_0$ and some $T$ in $\ST$. By \cite[Lemma 3.5]{BBOS}, for every such generator, there exists a short exact sequence
\[0\lrt \bigoplus_{\alpha: r\rt t} e_t^\la(e_i^\rho(T)_r)\lrt \bigoplus_{r\in Q_0} e_r^\la(e_i^\rho(T)_r)\lrt e_i^\rho(T)\lrt 0.\]
Since for every $r, i\in Q_0$, $e_i^\rho(T)_r$ is zero or a sum of copies of $T$ and  projective dimension of $T$ is at most $n$, projective dimension of $e_i^\rho(T)_r$ is at most $n$. Now since  for every $t\in Q_0$, $e_t^\la$ preserves projective dimensions, projective dimension of $e_t^\la(e_i^\rho(T)_r)$  is at most $n$. Hence the above short exact sequence shows that projective dimension of $e_i^\rho(T)$ is at most $n+1$.

For the Condition $(iv)$,   we construct the  desired exact sequence for projectivs of  the  form $e_i^\la(P)$, where $P$ is a projective object in $\SA$. By the dual of Lemma 3.5 of \cite{BBOS}, there exists a short exact sequence
\[0\lrt e_i^\la(P)\lrt \bigoplus_{j\in Q_0}e_j^\rho(e_i^\la(P)_j)\lrt \bigoplus _{\alpha: t\rt j} e_t^\rho(e_i^\la(P)_j)\lrt 0.\]
First note that, since $\ST$ is an $n$-tilting subcategory of $\SA$, there exists an exact sequence
\begin{equation}\label{tilting}
0\lrt P\lrt T^0\lrt T^1\lrt \cdots\lrt T^n\lrt 0
\end{equation}
where $T^\ell\in\ST$, $\ell\in\lbrace 0, \cdots, n\rbrace$.

Since functors $e_i^\la$, $e_i$ and $e_i^\rho$ are all exact, the exact sequence (\ref{tilting}) induces the following two exact sequences
\[0 \lrt \bigoplus_{j\in Q_0}e_j^\rho(e_i^\la(P)_j) \lrt \bigoplus_{j\in Q_0}e_j^\rho(e_i^\la(T^0)_j) \lrt \cdots \lrt \bigoplus_{j\in Q_0}e_j^\rho(e_i^\la(T^n)_j) \lrt 0,\]
and
\[0\lrt \bigoplus _{\alpha: t\rt j} e_t^\rho(e_i^\la(P)_j) \lrt \bigoplus _{\alpha: t\rt j} e_t^\rho(e_i^\la(T^0)_j) \lrt \cdots \lrt \bigoplus _{\alpha: t\rt j} e_t^\rho(e_i^\la(T^n)_j) \lrt 0. \]
So we get the following diagram
\[\xymatrix{&&0\ar[d]&0\ar[d]&\\
0\ar[r] &e_i^\la(P)\ar[r] &\bigoplus_{j\in Q_0}e_j^\rho(e_i^\la(P)_j)\ar[r]^{\theta} \ar[d]&\bigoplus _{\alpha: t\rt j} e_t^\rho(e_i^\la(P)_j)\ar[r]\ar[d]&0\\
&&\bigoplus_{j\in Q_0}e_j^\rho(e_i^\la(T^0)_j)\ar[d]\ar@{..>}[r] &\bigoplus _{\alpha: t\rt j} e_t^\rho(e_i^\la(T^0)_j)\ar[d]& \\
&&\vdots\ar[d]&\vdots\ar[d]&\\
&& \bigoplus_{j\in Q_0}e_j^\rho(e_i^\la(T^n)_j)\ar@{..>}[r]\ar[d]&\bigoplus _{\alpha: t\rt j} e_t^\rho(e_i^\la(T^n)_j)\ar[d]&\\
&&0&0&
}\]

Since for every $r\geq 1$, $\Ext^r_\SA(\ST, \ST)=0$, and adjoint properties of the adjoint pairs $(e_i^{\la}, e_i)$ and $(e_i, e_i^{\rho})$ extends to $\Ext^1$, for every $\ell\in\lbrace 0, \cdots, n\rbrace$, $\bigoplus _{\alpha: t\rt j} e_t^\rho(e_i^\la(T^\ell)_j)$ is a relative injective object with respects to $\bigoplus_{j\in Q_0}e_j^\rho(e_i^\la(T^\ell)_j)$. Thus, we can construct the dotted maps starting from $\theta$.

Now by considering the mapping cone of the above diagram and applying a simple diagram chasing, we get the long exact sequence
\begin{align*}
0\lrt \ & e_i^\la(P)\lrt \bigoplus_{j\in Q_0}e_j^\rho(e_i^\la(T^0)_j) \lrt \bigoplus_{j\in Q_0}e_j^\rho(e_i^\la(T^1)_j)\oplus \bigoplus _{\alpha: t\rt j} e_t^\rho(e_i^\la(T^0)_j)\\  \lrt &\ \  \cdots \ \lrt \bigoplus_{j\in Q_0}e_j^\rho(e_i^\la(T^n)_j)\oplus\bigoplus _{\alpha: t\rt j} e_t^\rho(e_i^\la(T^{n-1})_j)\lrt \bigoplus _{\alpha: t\rt j} e_t^\rho(e_i^\la(T^n)_j)\lrt 0,
\end{align*}
such that all terms except $e_i^\la(P)$ are in $\ST'$. This is the desired exact sequence. Hence the proof is complete.
\end{proof}

\section*{Acknowledgments}
The authors would like to thank Professor Bernhard Keller for pointing out Lemma~\ref{Keller} and Lidia Angeleri-H\"ugel for the helpful discussions.
This work was partly done during a visit of the first author to the Institut des Hautes \'{E}tudes Scientifiques (IHES), Paris, France.
The first and third would like to thank the support and excellent atmosphere at IHES.
The work of the first and the second author is based upon research funded by Iran National Science Foundation (INSF) under project No. 4001480.
The third author is supported by the European Union’s Horizon 2020 research and innovation programme under the Marie Sklodowska-Curie grant agreement No 893654.

\end{document}